\newcommand{\deff}[1]{\textbf{\emph{\sharp1}}}
\newcommand{\func}[3]{\sharp1 \colon \sharp2 \to \sharp3}
\newcommand{\bb}[1]{\mathbb{\sharp1}}
\newcommand{\lie}[1]{\mathfrak{\sharp1}}
\newcommand{\iprod}[2]{\langle \sharp1, \sharp2 \rangle}
\newcommand{\ddell}[1]{\frac{\partial}{\partial \sharp1}}
\theoremstyle{plain}
\newtheorem{theorem}{Theorem}[section]
\newtheorem{claim}[theorem]{Claim}
\newtheorem{corollary}[theorem]{Corollary}
\newtheorem{lemma}[theorem]{Lemma}
\newtheorem{proposition}[theorem]{Proposition}
\theoremstyle{definition}
\newtheorem{example}[theorem]{Example}
\newtheorem{remark}[theorem]{Remark}
\newtheorem{definition}[theorem]{Definition}
\title{
Quantum multiplication through equivariant Schubert calculus}
\author{Chi-Kwong Fok}
\date{November 26, 2021}
\begin{document}
\maketitle
\begin{abstract}
	In this note, we rederive quantum Pieri's formula and the rim hook algorithm in quantum Schubert calculus by studying multiplication in the equivariant cohomology ring of Grassmannians with respect to equivariant Schubert classes which are characteristic classes. We also extend this idea in studying equivariant quantum Schubert calculus, and obtain the equivariant quantum Giambelli's and Pieri's formulae in terms of characteristic classes, with the former formula shown to be free of quantum deformation. 
\end{abstract}
\tableofcontents
\section{Introduction}
Quantum Schubert calculus is the study of the quantum cohomology ring of Grassmannians which, apart from recording information of the classical intersection of Schubert varieties, also encodes their Gromov-Witten invariants via deformation of the algebra structure of the ordinary cohomology ring. Ever since the theoretical foundation of quantum cohomology was put on a firm footing (cf. \cite{RT, KM}), there has been extensive work on the multiplication rules in quantum Schubert calculus. The quantum Pieri's and Giambelli's formulae were first obtained in \cite{Ber} by analysing Grothendieck's quot schemes. A different proof of these formulae by the observation that Gromov-Witten invariants of a Grassmannian can be realized as classical intersection numbers in a related Grassmannian was given in \cite{Bu}. In \cite{BCFF} quantum Pieri's formula and the rim hook algorithm (which is used to simplify quantum Schubert classes) were proved by purely algebraic means via Siebert-Tian's presentation (cf. \cite{ST}) of the quantum cohomology ring. By incorporating torus actions on Grassmannians, one is led to the equivariant quantum cohomology ring (cf. \cite{GK} and \cite{Ki}). Multiplication rules for this ring, in particular Pieri- and Giambelli-type formulae were proved in \cite{Mi} and \cite{Mi2} by using the notions of span and kernel in \cite{Bu} to establish the vanishing of some structure constants and introducing factorial Schur polynomials.

The aim of this note is to approach the multiplication in (equivariant) quantum Schubert calculus by way of equivariant cohomology algebraically. In the first part of the note, we recover quantum Pieri's formula and the rim hook algorithm in slightly different forms. Here we work with equivariant Schubert classes which are characteristic classes instead of canonical Schurbert classes (cohomology classes represented by the Borel homotopy quotients of Schubert varieties) which are more widely used in the literature on Schubert calculus. One advantage of characteristic classes is that they allow for a presentation of the equivariant cohomology ring which admits an easy-to-describe ring homomorphism to Siebert-Tian's presentation of the quantum cohomology ring (see Lemma \ref{homomorphism}). Tantamount to forgetting part of the equivariant structure of the equivariant cohomology, this link between equivariant and quantum Schubert calculus is the key observation of the note, as it enables us to get quantum multiplication through equivariant multiplication which, with characteristic classes, can be carried out naturally using classical multiplication rules (Pieri, Giambelli, and Littlewood-Richardson) together with a simple algorithm (Proposition \ref{ringstr}(\ref{simplify})) which simplifies equivariant Schubert classes. Along this line of thought, we prove the following version of quantum Pieri's formula.

\begin{theorem}[Quantum Pieri's formula]\label{qpieri}
	Consider the quantum product $\sigma_\lambda*\sigma_{1^r}$ in $QH^*(\text{Gr}(k, n), \mathbb{Z})$, where $\lambda$ is a Young diagram inside the $k\times(n-k)$ rectangle and $1^r$ is the Young diagram of one column with $r$ boxes, $1\leq r\leq k$. 
	\begin{enumerate}
		\item If $\lambda_1<n-k$, then $\sigma_\lambda*\sigma_{1^r}$ does not have any quantum deformation, i.e., 
		\[\sigma_\lambda*\sigma_{1^r}=\sum_{\lambda'\in\lambda\oplus 1^r}\sigma_{\lambda'}, \]
		where $\lambda\oplus 1^r$ is the set of Young diagrams in the $k\times(n-k)$ rectangle obtained by adding to $\lambda$ $r$ boxes, no two of which are in the same row.
		\item If $\lambda_1=n-k$, then 
		\[\sigma_\lambda*\sigma_{1^r}=\sum_{\lambda'\in\lambda\oplus 1^r}\sigma_{\lambda'}+q\sum_{\mu\in\lambda^-\ominus1^{k-r}}\sigma_\mu,\]
		where $\lambda^-\ominus1^{k-r}$ is the set of Young diagrams obtained by removing one box from each of any $k-r$ rows of the Young diagram $\lambda^-:=(\lambda_2, \lambda_3, \cdots, \lambda_{k})$, which is $\lambda$ with the top row deleted (here $\lambda_i$ may be zero).
	\end{enumerate}
\end{theorem}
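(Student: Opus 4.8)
The plan is to compute the product $\sigma_\lambda\cdot\sigma_{1^r}$ first in the equivariant cohomology ring $H^*_T(\text{Gr}(k,n))$, in the characteristic-class presentation of Proposition~\ref{ringstr}, and then transport the answer to $QH^*(\text{Gr}(k,n))$ through the ring homomorphism $\Phi$ of Lemma~\ref{homomorphism}. In that presentation $\sigma_{1^r}$ is the $r$-th Chern class of the dual tautological subbundle, namely the elementary symmetric polynomial $e_r(x_1,\dots,x_k)$ in the equivariant Chern roots $x_i$ of $S^{\vee}$, while $\sigma_\lambda=s_\lambda(x_1,\dots,x_k)$. Since Pieri's rule $s_\lambda\cdot e_r=\sum_\mu s_\mu$ — the sum over partitions $\mu$ with at most $k$ rows for which $\mu/\lambda$ is a vertical strip of $r$ boxes — is an identity of symmetric functions, it holds verbatim in $H^*_T(\text{Gr}(k,n))$. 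I would then split the sum according to whether $\mu$ fits inside the $k\times(n-k)$ rectangle; since a vertical strip adds at most one box to the first row, the only way for $\mu$ to leave the rectangle is $\mu_1=\lambda_1+1=n-k+1$, which already forces $\lambda_1=n-k$.

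In case (1), where $\lambda_1<n-k$, no overflow terms occur, so the identity $\sigma_\lambda\cdot\sigma_{1^r}=\sum_{\mu\in\lambda\oplus 1^r}\sigma_\mu$ already holds in $H^*_T(\text{Gr}(k,n))$; applying $\Phi$ — a ring homomorphism that sends each characteristic-class $\sigma_\mu$ with $\mu$ in the rectangle to the corresponding quantum Schubert class and carries the cup product to the quantum product — yields (1) at once. In case (2), where $\lambda_1=n-k$, I would isolate the overflow terms $s_\mu(x)$ with $\mu_1=n-k+1$ and apply the simplification algorithm of Proposition~\ref{ringstr}(\ref{simplify}) to rewrite each in the equivariant Schubert basis; the resulting coefficients are polynomials in the equivariant parameters, and under $\Phi$ (which annihilates $e_1(t),\dots,e_{n-1}(t)$ and sends $e_n(t)$ to a sign times $q$) only one term survives. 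Concretely the net effect is read off from the Jacobi--Trudi determinant: in $QH^*(\text{Gr}(k,n))$, using the Siebert--Tian relations $h_{n-k+1}(x)=\dots=h_{n-1}(x)=0$ and $h_n(x)=(-1)^{k-1}q$, the first row of $\det\bigl(h_{\mu_i-i+j}(x)\bigr)_{1\le i,j\le k}$ is $\bigl(0,\dots,0,(-1)^{k-1}q\bigr)$, and expanding along it gives $s_\mu(x)=(-1)^{k+1}(-1)^{k-1}q\,s_{\mu^-}(x)=q\,\sigma_{\mu^-}$, where $\mu^-:=(\mu_2-1,\dots,\mu_k-1)$ and $\sigma_{\mu^-}=0$ if $\mu_k=0$.

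It then remains to identify which diagrams $\mu^-$ occur. Writing $\mu_i=\lambda_i+\varepsilon_i$ with $\varepsilon_i\in\{0,1\}$ and $\sum_{i\ge 2}\varepsilon_i=r-1$ (one of the $r$ added boxes lies in the first row), one gets $\mu^-_{i-1}=\lambda^-_{i-1}-(1-\varepsilon_i)$, so $\mu^-$ is obtained from $\lambda^-=(\lambda_2,\dots,\lambda_k)$ by deleting one box from each of the $k-r$ rows with $\varepsilon_i=0$. I would verify that $\mu\mapsto\mu^-$ is a bijection from the overflow diagrams with $\mu_k\ge 1$ onto $\lambda^-\ominus 1^{k-r}$: the vertical-strip condition on $\mu/\lambda$ together with $\mu_k\ge 1$ is exactly the condition that such a deletion produces a genuine (weakly decreasing, nonnegative) Young diagram, and $\mu$ is recovered by $\mu_1=n-k+1$, $\mu_i=\mu^-_{i-1}+1$. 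Summing over the two types of terms gives $\sigma_\lambda*\sigma_{1^r}=\sum_{\mu\in\lambda\oplus 1^r}\sigma_\mu+q\sum_{\nu\in\lambda^-\ominus 1^{k-r}}\sigma_\nu$, which is (2).

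I expect the main obstacle to be the bookkeeping in case (2) rather than any isolated hard estimate: matching ``delete one box from each of $k-r$ rows of $\lambda^-$'' with the overflow terms of the classical Pieri rule — in particular checking that the cases excluded combinatorially (a non-partition $\mu$, or $\mu_k=0$) correspond precisely to deletions that fail to yield a Young diagram — and tracking the sign from the Siebert--Tian relation through the Jacobi--Trudi expansion so that the quantum term appears with coefficient exactly $+q$. A secondary subtlety is that the characteristic-class ``Schubert classes'' $s_\mu(x)$ are not the geometric equivariant Schubert classes, so one must pass through Lemma~\ref{homomorphism} rather than any naive specialization of the equivariant parameters to reach the honest quantum Schubert classes.
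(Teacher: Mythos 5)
Your proposal is correct and follows essentially the same route as the paper: classical Pieri for a vertical strip applied to the characteristic/symmetric-function classes, transport to $QH^*$ via the homomorphism of Lemma \ref{homomorphism} (with quantum Giambelli), and reduction of each overflow term with first part $n-k+1$ by cofactor expansion of the Jacobi--Trudi determinant along its first row $(0,\dots,0,(-1)^{k+1}q)$, followed by the same rebinning bijection onto $\lambda^-\ominus 1^{k-r}$ (including the vanishing when $\mu_k=0$). The signs and the bookkeeping you describe match the paper's computation, so no changes are needed.
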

We shall point out that the description of the quantum deformation in Theorem \ref{qpieri} is equivalent to that in \cite[Proposition 4.2]{BCFF}, but it is more specific about the number of boxes to be deleted from each of a given number of rows of $\lambda$, while the algorithm prescribed in \cite[Proposition 4.2]{BCFF} indicates the minimum number of boxes to be deleted from each column of $\lambda$. An application of the duality isomorphism of the quantum cohomology ring (Proposition \ref{qduality}, shown using the natural duality isomorphism of equivariant cohomology induced by the diffeomorphism between $\text{Gr}(k, n)$ and $\text{Gr}(n-k, n)$) leads to the `row version' of quantum Pieri's formula (Corollary \ref{rqpieri}), which is equivalent to \cite[Equation (22)]{BCFF}.

We also show a version of the rim hook algorithm (Theorem \ref{rimhook}), which can be used to simplify within the quantum cohomology ring a Schubert class $\sigma_\lambda$ with $\lambda$ not covered by the $k\times (n-k)$ rectangle to the one corresponding to a Young diagram within the rectangle. This algorithm is in fact the dual version of the one in \cite[Main Lemma]{BCFF}.

In the second part of the note, we obtain multiplication rules in the equivariant quantum cohomology ring of Grassmannians in terms of characteristic classes, which are denoted by $\widehat{\sigma}_\lambda$ with $\lambda$ being a Young diagram, and defined in Definition \ref{equivGiambelli}. We compare the bases of characteristic classes and canonical Schubert classes and use Littlewood-Richardson rule for factorial Schur polynomials which represent canonical classes to show that in the equivariant quantum setting one has Giambelli's formula without any equivariant or quantum deformation.
\begin{theorem}[Equivariant quantum Giambelli's formula]\label{eqquantgiam}
In $QH_T^*(X, \mathbb{Z})$, we have, for $\lambda\subseteq k\times(n-k)$, 
\[\widehat{\sigma}_\lambda=\det(\widehat{\sigma}_{1^{\lambda_i^T+j-i}})_{1\leq i, j\leq\ell(\lambda^T)}=\det(\widehat{\sigma}_{\lambda_i+j-i})_{1\leq i, j\leq\ell(\lambda)}.\]
\end{theorem}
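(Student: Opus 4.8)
The plan is to isolate what is genuinely new in the statement. Within the ordinary equivariant cohomology ring $H_T^*(X)$ the two determinant identities are classical: the characteristic classes $\widehat{\sigma}_{1^r}$ and $\widehat{\sigma}_r$ are the $r$-th elementary and complete symmetric polynomials in the equivariant Chern roots of the dual tautological subbundle, so $\det(\widehat{\sigma}_{1^{\lambda_i^T+j-i}})$ and $\det(\widehat{\sigma}_{\lambda_i+j-i})$ are the column and row Jacobi--Trudi expansions of one and the same Schur polynomial $\widehat{\sigma}_\lambda$, an identity valid in any commutative ring. Reducing the quantum product modulo $q$ recovers the cup product, so $\widehat{\sigma}_\lambda$ differs from each of the two determinants evaluated with $*$ by an element of $q\cdot QH_T^*(X)$; the whole content of Theorem~\ref{eqquantgiam} is that this $q$-part vanishes, i.e. that the Jacobi--Trudi determinant is insensitive to quantum deformation, and the equality of the two determinants then follows from the classical symmetric-function identity read inside $QH_T^*(X)$.

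To prove the vanishing I would pass to the canonical Schubert basis of $QH_T^*(X)$, whose elements are represented by factorial Schur polynomials, and compare it with the characteristic-class basis via the (unitriangular, equivariant-parameter-dependent) change of basis. The inputs are the Jacobi--Trudi-type determinant formulae for factorial Schur polynomials together with the Littlewood--Richardson rule for them, which between them control every product $\widehat{\sigma}_\mu * \widehat{\sigma}_{1^r}$. Feeding this into the cofactor expansion of the row determinant evaluated with $*$ --- or, equivalently, working in an equivariant quantum Siebert--Tian presentation $QH_T^*(X)\cong(\mathbb{Z}[q]\otimes H_T^*(\mathrm{pt}))[\widehat{\sigma}_{1^1},\dots,\widehat{\sigma}_{1^k}]/I$ in which the quantum product is ordinary polynomial multiplication, obtained in the spirit of Lemma~\ref{homomorphism} --- one checks that the equivariant and the quantum correction terms reorganize and cancel, leaving exactly $\widehat{\sigma}_\lambda$. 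This is meant to be the equivariant refinement of the mechanism by which quantum Pieri (Theorem~\ref{qpieri}) and the rim-hook algorithm (Theorem~\ref{rimhook}) already conspire to give a deformation-free Giambelli determinant in the non-equivariant case.

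The step I expect to be the main obstacle is precisely this cancellation. The change of basis between characteristic and canonical classes is not homogeneous, so the quantum corrections one must kill sit next to lower-degree purely equivariant terms, and one has to verify that the pairing of box-deletion (equivalently rim-hook) terms across the cofactor expansion is compatible with the factorial structure of the classes. The combinatorial skeleton is the same as in \cite{BCFF} and in the proof of Theorem~\ref{rimhook}, so I anticipate that the Littlewood--Richardson rule for factorial Schur polynomials, together with the explicit comparison of the two bases, will be the decisive new ingredient that makes the bookkeeping close up.
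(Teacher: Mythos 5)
Your setup matches the paper's: the identity is classical in $H_T^*(X,\mathbb{Z})$ (Definition \ref{equivGiambelli} and Remark \ref{coincide}), and the whole content is that the determinant, evaluated with the quantum product, acquires no $q$-part; you also name the right ingredients (canonical classes, factorial Schur polynomials, the Molev--Sagan factorial Littlewood--Richardson rule). But the decisive step is exactly the one you defer: you propose to expand the \emph{row} determinant $\det(\widehat{\sigma}_{\lambda_i+j-i})$ with $*$ and then verify that the equivariant and quantum corrections ``reorganize and cancel.'' In that route genuine cancellations are unavoidable (already a single entry product such as $\widehat{\sigma}_{n-k}*\widehat{\sigma}_{n-k}$ carries quantum terms), and your proposal gives no mechanism for producing them --- it only anticipates that the bookkeeping closes up. The paper's argument is structured precisely so that no cancellation ever has to be checked: it works with the \emph{column} determinant $\det(\widehat{\sigma}_{1^{\lambda_i^T+j-i}})$, uses Proposition \ref{cantochar} to write each $\widehat{\sigma}_{1^r}$ as an $H_T^*(\text{pt},\mathbb{Z})$-linear combination of canonical classes $\sigma^{\text{can}}_{1^i}$ of width at most one, and then invokes Proposition \ref{noquantdef} (proved from Mihalcea's factorial-Schur presentations of $H_T^*$ and $QH_T^*$ plus the factorial LR rule, which bounds the width of any product by the sum of the widths). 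Since every monomial in the column determinant is a product of at most $\ell(\lambda^T)=\lambda_1\le n-k$ width-one canonical classes, no term can ever reach width $>n-k$, so no quantum deformation appears anywhere and the quantum evaluation agrees term by term with the classical one, which is $\widehat{\sigma}_\lambda$.

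Your suggested shortcut --- ``equivalently, working in an equivariant quantum Siebert--Tian presentation in which the quantum product is ordinary polynomial multiplication'' --- does not repair this gap and is close to circular. Kim's presentation of $QH_{U(n)}^*(X,\mathbb{Z})$ identifies the abstract polynomial generators with the special classes $\widehat{\sigma}_1,\dots,\widehat{\sigma}_{n-k}$ and the $e_i$, but the assertion that the geometric class $\widehat{\sigma}_\lambda$ (defined as a characteristic class, i.e.\ an element of the underlying $H_T^*(\text{pt},\mathbb{Z})[q]$-module) equals the Giambelli polynomial in those generators evaluated with $*$ is exactly the statement of equivariant quantum Giambelli; it cannot be read off from the presentation alone, which is why the paper routes the comparison through the canonical basis and the width bound. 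To turn your outline into a proof you would either have to carry out the row-determinant cancellations explicitly (hard, and not controlled by the tools you list) or adopt the column-determinant/width argument, at which point you are reproducing the paper's proof.
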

We also deduce multiplication in the equivariant quantum cohomology ring through a ring homomorphism from the equivariant cohomology ring to the equivariant version of Siebert-Tian presentation for the equivariant quantum cohomology ring, extending aforementioned idea in the nonequivariant setting. 
\begin{theorem}\label{equivequivquantum}
	Let $\lambda$ and $\mu$ be Young diagrams in the $k\times(n-k)$ rectangle, and $\widetilde{f}: H_{U(n)}(\text{pt}, \mathbb{Z})\to H_{U(n)}^*(\text{pt}, \mathbb{Z})[q]$ a ring homomorphism defined by $\widetilde{f}(e_i)=e_i$ for $1\leq i\leq n-1$ and $\widetilde{f}(e_n)=e_n+(-1)^kq$. If $\widehat{\sigma}_\lambda\cdot\widehat{\sigma}_\mu=\sum_{\nu\subseteq k\times(n-k)}c_{\lambda\mu}^\nu\widehat{\sigma}_\nu$ in $H_{U(n)}^*(X, \mathbb{Z})$, then
	\[\widehat{\sigma}_\lambda*\widehat{\sigma}_\mu=\sum_{\nu\subseteq k\times(n-k)}\widetilde{c}_{\lambda\mu}^\nu\widehat{\sigma}_\nu\]
	in $QH_{U(n)}^*(X, \mathbb{Z})$, where $\widetilde{c}_{\lambda\mu}^\nu=\widetilde{f}(c_{\lambda\mu}^\nu)$. 
\end{theorem}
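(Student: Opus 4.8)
I would prove Theorem \ref{equivequivquantum} by constructing, once and for all, a single ring homomorphism $\Phi\colon H_{U(n)}^*(X,\mathbb Z)\to QH_{U(n)}^*(X,\mathbb Z)$ which restricts to $\widetilde f$ on the coefficient ring $R:=H_{U(n)}^*(\mathrm{pt},\mathbb Z)$ and satisfies $\Phi(\widehat\sigma_\lambda)=\widehat\sigma_\lambda$ for every $\lambda\subseteq k\times(n-k)$. Granting such a $\Phi$, the theorem is immediate: since $\{\widehat\sigma_\nu:\nu\subseteq k\times(n-k)\}$ is an $R$-basis of $H_{U(n)}^*(X,\mathbb Z)$ and an $R[q]$-basis of $QH_{U(n)}^*(X,\mathbb Z)$, the constants $c_{\lambda\mu}^\nu$ are well-defined elements of $R$, and applying the ring map $\Phi$ to $\widehat\sigma_\lambda\cdot\widehat\sigma_\mu=\sum_{\nu\subseteq k\times(n-k)}c_{\lambda\mu}^\nu\widehat\sigma_\nu$ yields, in $QH_{U(n)}^*(X,\mathbb Z)$, the identity $\widehat\sigma_\lambda*\widehat\sigma_\mu=\sum_{\nu\subseteq k\times(n-k)}\widetilde f(c_{\lambda\mu}^\nu)\widehat\sigma_\nu$, which is exactly the assertion, the product of characteristic classes in the equivariant Siebert--Tian presentation of $QH_{U(n)}^*(X,\mathbb Z)$ being by construction the equivariant quantum product.

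To build $\Phi$ I would compare presentations. Writing $\widehat\sigma_{1^i}=c_i(S^*)$ ($1\le i\le k$) for the special characteristic classes (Chern classes of the dual tautological subbundle) and $\widehat\sigma_j$ for the associated complete homogeneous polynomials in the Chern roots of $S^*$, one has $H_{U(n)}^*(X,\mathbb Z)\cong R[\widehat\sigma_{1^1},\dots,\widehat\sigma_{1^k}]/(c_{n-k+1}(Q),\dots,c_n(Q))$, where $c_m(Q)=\sum_{a\ge0}e_a\,\widehat\sigma_{m-a}$ is the $m$-th Chern class of the tautological quotient bundle $Q$, while the equivariant analogue of Siebert--Tian's presentation reads $QH_{U(n)}^*(X,\mathbb Z)\cong R[q][\widehat\sigma_{1^1},\dots,\widehat\sigma_{1^k}]/(c_{n-k+1}(Q),\dots,c_{n-1}(Q),\,c_n(Q)+(-1)^kq)$, i.e.\ only the top relation is deformed. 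Let $\widehat\Phi\colon R[\widehat\sigma_{1^1},\dots,\widehat\sigma_{1^k}]\to R[q][\widehat\sigma_{1^1},\dots,\widehat\sigma_{1^k}]$ be the ring homomorphism with $\widehat\Phi|_R=\widetilde f$ and $\widehat\Phi(\widehat\sigma_{1^i})=\widehat\sigma_{1^i}$. Since $c_m(Q)$ is homogeneous of degree $m$, the element $e_n\in R$ can occur in it only for $m=n$, and there only through the term $e_n\widehat\sigma_0=e_n$; hence $\widehat\Phi(c_m(Q))=c_m(Q)$ for $m\le n-1$ while $\widehat\Phi(c_n(Q))=c_n(Q)+(-1)^kq$. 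Therefore $\widehat\Phi$ carries the ideal defining $H_{U(n)}^*(X,\mathbb Z)$ into the ideal defining $QH_{U(n)}^*(X,\mathbb Z)$ and descends to a ring homomorphism $\Phi\colon H_{U(n)}^*(X,\mathbb Z)\to QH_{U(n)}^*(X,\mathbb Z)$ which fixes $R$ through $\widetilde f$ and fixes each $\widehat\sigma_{1^i}$.

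It remains to upgrade the observation that $\Phi$ fixes the special classes $\widehat\sigma_{1^i}$ to the statement that $\Phi$ fixes every characteristic class $\widehat\sigma_\lambda$ with $\lambda\subseteq k\times(n-k)$, and this is precisely where the equivariant quantum Giambelli formula (Theorem \ref{eqquantgiam}) is needed. By Definition \ref{equivGiambelli}, $\widehat\sigma_\lambda=\det(\widehat\sigma_{1^{\lambda_i^T+j-i}})_{1\le i,j\le\ell(\lambda^T)}$ as an element of $H_{U(n)}^*(X,\mathbb Z)$, a polynomial expression in the $\widehat\sigma_{1^j}$; applying the ring homomorphism $\Phi$, which fixes each $\widehat\sigma_{1^j}$, shows that $\Phi(\widehat\sigma_\lambda)$ is the same determinant evaluated inside $QH_{U(n)}^*(X,\mathbb Z)$, and Theorem \ref{eqquantgiam} states exactly that this determinant equals $\widehat\sigma_\lambda$ there. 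Hence $\Phi(\widehat\sigma_\lambda)=\widehat\sigma_\lambda$ for all $\lambda\subseteq k\times(n-k)$, and together with the reduction in the first paragraph this proves the theorem.

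The crux is not the formal manipulation above but securing its key input: one needs the equivariant analogue of the Siebert--Tian presentation of $QH_{U(n)}^*(X,\mathbb Z)$ with the feature that \emph{only} the top relation $c_n(Q)=0$ gets deformed, and one must pin down the sign of that deformation so that it agrees with the $(-1)^k$ built into $\widetilde f$ --- the nonequivariant counterpart of this being Lemma \ref{homomorphism}. Once this presentation is in hand, whether cited from the literature or derived afresh (and, if one wishes, calibrated against a single explicit quantum product such as $\widehat\sigma_{1^1}*\widehat\sigma_{(n-k)^k}$), everything else is bookkeeping, the remaining nontrivial ingredient being the deformation-freeness of the equivariant quantum Giambelli formula, Theorem \ref{eqquantgiam}.
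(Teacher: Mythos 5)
Your proposal is correct and follows essentially the same route as the paper: the paper's Proposition \ref{eqquantumhomo} constructs exactly your map $\Phi$ (denoted $\widetilde f$ there) by comparing the presentation of $H_{U(n)}^*(X,\mathbb Z)$ with Kim's presentation of $QH_{U(n)}^*(X,\mathbb Z)$ in which only the top relation acquires the quantum term, and the proof of Theorem \ref{equivequivquantum} then invokes Theorem \ref{eqquantgiam} to get $\widetilde f(\widehat\sigma_\lambda)=\widehat\sigma_\lambda$ before applying $\widetilde f$ to the equivariant product expansion. The only cosmetic difference is that you use the column generators $\widehat\sigma_{1^i}$ with relations $c_m(Q)$ (top relation deformed by $(-1)^kq$), whereas the paper uses the row generators $\widehat\sigma_i$ with relations $\widehat Y_r$ (deformed by $(-1)^{n-k}q$) and supplies the quantum presentation you leave as external input by citing Theorem 2 of \cite{Ki}.
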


\begin{theorem}[Equivariant quantum Pieri's formula]\label{equivquantpieri}
	Let $\lambda$ be a Young diagram with $\ell$ rows in the $k\times(n-k)$ rectangle. Then in $QH_{U(n)}^*(\text{Gr}(k, n), \mathbb{Z})$, 
	\begin{align*}
		\widehat{\sigma}_\lambda*\widehat{\sigma}_1&=\sum_{\lambda'\in\lambda\oplus 1}\widehat{\sigma}_{\lambda'}+\delta_{\lambda_1, n-k}\left(\sum_{j=0}^{\ell-2}(-1)^{\ell-j}\sum_{m=\lambda_{\ell-j+1}}^{\lambda_{\ell-j}-1}e_{n-k+\ell-m-j}\widehat{\sigma}_{\lambda_2-1, \lambda_3-1, \cdots, \lambda_{\ell-j}-1, m, \lambda_{\ell-j+1}, \cdots, \lambda_\ell}\right.\\
		&\left.-\sum_{m=\lambda_2}^{n-k}e_{n-k-m+1}\cdot\widehat{\sigma}_{m, \lambda_2, \cdots, \lambda_\ell}+\delta_{k\ell}q\widehat{\sigma}_{\lambda_2-1, \lambda_3-1, \cdots, \lambda_\ell-1}\right),
	\end{align*}
	where $\lambda\oplus 1$ is the set of Young diagrams in the $k\times (n-k)$ rectangle obtained by adding to $\lambda$ one box.
\end{theorem}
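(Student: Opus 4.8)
The plan is to derive the formula from the corresponding identity in ordinary equivariant cohomology and then push it forward to $QH_{U(n)}^*(\text{Gr}(k, n),\mathbb{Z})$ through the ring homomorphism $\widetilde f$ of Theorem \ref{equivequivquantum}. So the argument has two stages: first compute $\widehat\sigma_\lambda\cdot\widehat\sigma_1$ in closed form in $H_{U(n)}^*(\text{Gr}(k, n),\mathbb{Z})$, then apply $\widetilde f$ to the structure constants. Essentially all the work is in stage one; stage two is a short check of which coefficients are divisible by $e_n$.

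For stage one I would use that $\widehat\sigma_1=\widehat\sigma_{(1)}$ and that, by Definition \ref{equivGiambelli} together with Theorem \ref{eqquantgiam}, the classes $\widehat\sigma_\lambda$ obey both Jacobi--Trudi-type determinantal formulae in the characteristic classes $\widehat\sigma_m$ and $\widehat\sigma_{1^r}$; in particular the classical Pieri rule holds formally for them, and $\widehat\sigma_\mu=0$ whenever $\mu$ has more than $k$ rows (the relevant determinant acquires a vanishing row, as $\widehat\sigma_{1^r}$ is a Chern class of the rank-$k$ dual tautological bundle, hence $0$ for $r>k$). Thus in $H_{U(n)}^*(\text{Gr}(k, n),\mathbb{Z})$
\[\widehat\sigma_\lambda\cdot\widehat\sigma_1=\sum_{\substack{\mu=\lambda+\square\\ \ell(\mu)\le k}}\widehat\sigma_\mu,\]
the sum being over Young diagrams obtained by adding one box. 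Those $\mu$ with $\mu_1\le n-k$ are precisely the elements of $\lambda\oplus 1$; the only diagram in the sum that can overflow the rectangle is $\mu=(n-k+1,\lambda_2,\dots,\lambda_\ell)$, present exactly when $\lambda_1=n-k$. It then remains to rewrite this overflowing class in the basis $\{\widehat\sigma_\nu:\nu\subseteq k\times(n-k)\}$ by the simplification algorithm of Proposition \ref{ringstr}(\ref{simplify}), i.e.\ by iterated use of the relations $\sum_{i=0}^{n}e_i\widehat\sigma_{m-i}=0$ that hold for $m>n-k$.

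Concretely I would expand the Giambelli determinant $\det(\widehat\sigma_{\nu_i+j-i})$ with $\nu=(n-k+1,\lambda_2,\dots,\lambda_\ell)$, substitute the above relation into its overflowing first row, reassemble each resulting summand as a (possibly non-partition-indexed) Jacobi--Trudi determinant, straighten it, and recurse on the remaining rows; an induction on $\ell$ peeling off the bottom row at each step should organize the output into exactly the two nested sums in the statement — the single sum $-\sum_{m=\lambda_2}^{n-k}e_{n-k-m+1}\widehat\sigma_{m,\lambda_2,\dots,\lambda_\ell}$ collecting the substitutions that land directly on a partition inside the rectangle, and the double sum $\sum_{j=0}^{\ell-2}(-1)^{\ell-j}\sum_{m}e_{n-k+\ell-m-j}\widehat\sigma_{\lambda_2-1,\dots,\lambda_{\ell-j}-1,m,\lambda_{\ell-j+1},\dots,\lambda_\ell}$ accumulating the contributions of the rows that have to be lowered during straightening. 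Carrying out this bookkeeping, and checking that no other terms survive, is the main obstacle. A useful internal check is that every term must carry the cohomological degree of $\widehat\sigma_\lambda\cdot\widehat\sigma_1$, which pins down the index of the $e$-coefficient attached to each $\widehat\sigma_\nu$.

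For stage two, Theorem \ref{equivequivquantum} gives $\widehat\sigma_\lambda*\widehat\sigma_1=\sum_\nu\widetilde f\bigl(c_{\lambda 1}^{\nu}\bigr)\widehat\sigma_\nu$ with $c_{\lambda 1}^{\nu}$ the coefficients just computed, and $\widetilde f$ fixes every $e_i$ with $i<n$ while sending $e_n$ to $e_n+(-1)^kq$, so I only need to spot the coefficients equal to $\pm e_n$. An index check does this: $e_{n-k+\ell-m-j}=e_n$ forces $m+j=\ell-k$, so, given $0\le j\le\ell-2$, $0\le m$ and $\ell\le k$, one must have $\ell=k$ and $m=j=0$; and $e_{n-k-m+1}=e_n$ forces $m=1-k$, which lies in its summation range only when $k=1$, again forcing $\ell=k=1$. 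In every case the unique affected monomial is $(-1)^ke_n\,\widehat\sigma_{\lambda_2-1,\dots,\lambda_\ell-1}$ with $\ell=k$ (an empty index list when $\ell=1$), and applying $\widetilde f$ turns it into $(-1)^ke_n\,\widehat\sigma_{\lambda_2-1,\dots,\lambda_\ell-1}+q\,\widehat\sigma_{\lambda_2-1,\dots,\lambda_\ell-1}$, while all other terms are unchanged; this produces precisely the $\delta_{k\ell}\,q\,\widehat\sigma_{\lambda_2-1,\dots,\lambda_\ell-1}$ correction. Finally I would specialize all $e_i$ to $0$ and check that the identity collapses to Theorem \ref{qpieri} in the case $r=1$ (noting that $\lambda^-\ominus 1^{k-1}$ is nonempty precisely when $\lambda$ has $k$ rows, in which case it is the single diagram $(\lambda_2-1,\dots,\lambda_k-1)$), which serves as a sanity check on the whole computation.
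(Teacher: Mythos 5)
Your route is the same as the paper's: the paper proves Theorem \ref{equivquantpieri} in one line by combining the equivariant Pieri formula with Theorem \ref{equivequivquantum}, and your stage two (the index check showing that the only coefficient moved by $\widetilde f$ is $(-1)^k e_n\,\widehat{\sigma}_{\lambda_2-1,\cdots,\lambda_\ell-1}$, occurring exactly when $\ell=k$, via $m+j=\ell-k$ in the double sum and $m=1-k$ in the single sum) is a correct and welcome expansion of what the paper leaves implicit. The only inefficiency is stage one: the closed-form product $\widehat{\sigma}_\lambda\cdot\widehat{\sigma}_1$ in $H_{U(n)}^*(X,\mathbb{Z})$, whose straightening bookkeeping you describe but do not carry out and call the main obstacle, is precisely Theorem \ref{equivpieri} (equivariant Pieri's formula), already proved in Section \ref{equivsch} by the very argument you sketch (classical Pieri, Giambelli, and Proposition \ref{ringstr}(\ref{simplify})), so you may simply cite it instead of re-deriving it.
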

Note that there are no `mixed terms' in the above equivariant quantum Pieri's formula, i.e., terms that involve both the equivariant and quantum variables, as is the case for equivariant quantum Pieri's formula in \cite{Mi} in terms of canonical Schubert classes.

We shall stress that, apart from the partially forgetful ring homomorphism from the equivariant cohomology to the quantum cohomology and the injective homomorphism from the equivariant cohomology to the equivariant quantum cohomology which are depicted in this note as purely algebraic results, our (re)derivations of quantum multiplication rules also crucially depend on the (equivariant) quantum Giambelli's formula (cf. \cite{Ber, Mi2}), which ultimately is built on some algebro-geometric arguments. It is natural to wonder whether these ring homomorphisms actually have any geometric connotation. In particular, we would like to see whether Gromov-Witten invariants can be interpreted as equivariant intersection of characteristic classes in a natural way, so that an alternative proof of (equivariant) quantum Giambelli's formula through these ring homomorphisms. We will explore this problem elsewhere. 

The organization of the note is as follows. In Section \ref{equivsch}, equivariant characteristic classes of Grassmannians which lift ordinary Schubert classes are defined. A presentation for the equivariant cohomology ring with these characteristic classes is given, and an algorithm for multiplying and simplifying these classes is outlined with an example. In Section \ref{qsch}, after reviewing the basics of quantum Schubert calculus and showing the ring homomorphism from equivariant to quantum cohomology, we recover quantum Pieri's formula (Theorem \ref{qpieri}) and prove a version of rim hook algorithm. We illustrate the application of the latter to multiplying general quantum Schubert classes with examples from \cite{BCFF} and contrast our computations with those in \cite{BCFF}. Section \ref{eqquantumSchCal} is devoted to the proofs of Theorems \ref{eqquantgiam}, \ref{equivequivquantum} and \ref{equivquantpieri}. We illustrate these results with an example and the multiplication table for the equivariant quantum cohomology ring of $\text{Gr}(2, 4)$. 

\textbf{Acknowledgements}: The author would like to thank Kwokwai Chan for his hospitality during a visit to the Chinese University of Hong Kong when most of the work in this note was done.
\section{Equivariant Schubert calculus via characteristic classes}\label{equivsch}
Let $T:=(S^1)^n$ be an $n$-dimensional torus. The $T$-equivariant cohomology coefficient ring $H_T^*(\text{pt}, \mathbb{Z})$ is isomorphic to the polynomial ring $\mathbb{Z}[t_1, t_2, \cdots, t_n]$. Let $e_i$ be the $i$-th elementary symmetric polynomials in $t_1, \cdots, t_n$. Let $T$ act on $\mathbb{C}^n$ by coordinatewise multiplication, and $X$ the Grassmannian $\text{Gr}(k, n)$.  The $T$-action on $\mathbb{C}^n$ induces a natural $T$-action on $X$, which is well-known to be equivariantly formal, i.e., the equivariant cohomology $H_T^*(X, \mathbb{Z})$ is isomorphic, as a $H_T^*(\text{pt}, \mathbb{Z})$-module, to $H_T^*(\text{pt}, \mathbb{Z})\otimes H^*(X, \mathbb{Z})$ (cf. \cite{GKM}). In the literature on equivariant Schubert calculus, a module basis for $H_T^*(X, \mathbb{Z})$ is often chosen to be the so-called canonical Schubert classes (cf. \cite{KT}, \cite{Mi}), which are defined as the Poincar\'e duals of the homotopy quotients of Schubert varieties determined by the standard flag of $\mathbb{C}^n$. However, in earlier papers on equivariant quantum cohomology of Grassmannians (cf. \cite{GK}, \cite{Ki}), equivariant Chern classes of tautological bundles are taken as the $H_T^*(\text{pt}, \mathbb{Z})$-algebra generators. In this section, we will describe a module basis consisting of characteristic classes which lift equivariantly the ordinary Schubert classes. 

Let $S$ be the tautological bundle of $X$ and $Q:=\underline{\mathbb{C}^n}/S$, the quotient bundle. They are all $T$-equivariant. 
\begin{definition}
	Let $\widetilde{\sigma}_r$ be $c_r^T(Q)$, the $r$-th equivariant Chern class of $Q$, and $\widetilde{\sigma}_{1^r}$ be $c_r^T(S^*)$, the $r$-th equivariant Chern class of $S^*$. 
\end{definition}
In fact $\widetilde{\sigma}_r$ is an equivariant lift of the ordinary Schubert class $\sigma_r$ corresponding to the row Young diagram with $r$ boxes, while $\widetilde{\sigma}_{1^r}$ is an equivariant lift of the ordinary Schubert class corresponding to the column Young diagram with $r$ boxes. It should be noted that $\widetilde{\sigma}_1$ and $\widetilde{\sigma}_{1^1}$ are not the same, though both lifts $\sigma_1$: taking the first equivariant Chern classes of vector bundles in the short exact sequence
\[0\longrightarrow S\longrightarrow\underline{\mathbb{C}^n}\longrightarrow Q\longrightarrow 0\]
gives
\begin{align}
	c_1^T(S)+c_1^T(Q)&=c_1^T(\underline{\mathbb{C}^n})\nonumber\\
	-c_1^T(S^*)+\sigma_1&=\sum_{i=1}^n t_i\nonumber\\
	\widetilde{\sigma}_{1^1}&=\widetilde{\sigma}_1-e_1.\label{diff}
\end{align}
In general, the Whitney product formula $\displaystyle c^T(S)c^T(Q)=c^T(\underline{\mathbb{C}^n})=1+\sum_{i=1}^ne_i$ gives the following
\begin{proposition}\label{colgiam}
	$\widetilde{\sigma}_{1^r}=\det(\widetilde{\sigma}_{1+j-i}-\delta_{rj}e_{1+j-i})_{1\leq i, j\leq r}$ (here we take the convention that $\widetilde{\sigma}_r=0$ if $r>n-k$ or $r<0$, and $\widetilde{\sigma}_0=1$).
\end{proposition}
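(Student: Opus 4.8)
The plan is to extract from the Whitney identity $c^T(S)c^T(Q)=c^T(\underline{\mathbb C^n})=1+\sum_{i=1}^n e_i$ displayed just above the proposition a linear recursion among the classes $\widetilde\sigma_{1^r}$ and $\widetilde\sigma_r$, and then to read the claimed determinant off by a short column manipulation. Since $c_i^T(S)=(-1)^i c_i^T(S^*)=(-1)^i\widetilde\sigma_{1^i}$ and $c_j^T(Q)=\widetilde\sigma_j$ (with $\widetilde\sigma_{1^0}=\widetilde\sigma_0=1$), comparing the components of cohomological degree $2m$ on the two sides gives, for every $m\ge 1$,
\[\sum_{i=0}^{m}(-1)^i\widetilde\sigma_{1^i}\widetilde\sigma_{m-i}=e_m ,\]
and, isolating the $i=0$ summand and multiplying by $-1$,
\[\sum_{l=1}^{m}(-1)^{l-1}\widetilde\sigma_{1^l}\widetilde\sigma_{m-l}=\widetilde\sigma_m-e_m .\]

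Write $M:=(\widetilde\sigma_{1+j-i}-\delta_{rj}e_{1+j-i})_{1\le i,j\le r}$, and for $0\le j\le r-1$ let $v_j$ be the vector with $i$-th entry $\widetilde\sigma_{1+j-i}$; for $1\le j\le r-1$ these are exactly the columns of $M$ (the $e$-term does not touch them), while $v_0=(1,0,\dots,0)^{\mathsf T}$. Putting $m=1+r-i$ into the recursion above and using the conventions $\widetilde\sigma_0=1$, $\widetilde\sigma_{<0}=0$ shows that the $i$-th entry $\widetilde\sigma_{1+r-i}-e_{1+r-i}$ of the last column of $M$ equals $\sum_{l=1}^{r}(-1)^{l-1}\widetilde\sigma_{1^l}(v_{r-l})_i$; that is, the last column of $M$ is $\sum_{l=1}^{r-1}(-1)^{l-1}\widetilde\sigma_{1^l}v_{r-l}+(-1)^{r-1}\widetilde\sigma_{1^r}v_0$. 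Expanding $\det M$ by multilinearity in the last column, every term involving some $v_{r-l}$ with $1\le l\le r-1$ has a repeated column and hence vanishes, leaving $\det M=(-1)^{r-1}\widetilde\sigma_{1^r}\det[v_1|\cdots|v_{r-1}|v_0]$. A Laplace expansion of this last determinant along the column $v_0$ reduces it to $\det(\widetilde\sigma_{j-i})_{1\le i,j\le r-1}$, a triangular matrix with $1$'s on the diagonal, and the cofactor sign contributes $(-1)^{r+1}$, so $\det[v_1|\cdots|v_{r-1}|v_0]=(-1)^{r-1}$ and $\det M=(-1)^{2(r-1)}\widetilde\sigma_{1^r}=\widetilde\sigma_{1^r}$. (For $r=1$ there are no columns $v_j$ with $j\ge1$ and the claim is just \eqref{diff}.)

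I do not expect a genuine obstacle: once the recursion is in hand the rest is elementary linear algebra. The only thing demanding a little care is the boundary bookkeeping — checking that the conventions $\widetilde\sigma_0=1$, $\widetilde\sigma_{<0}=0$ and $e_0=1$, $e_{<0}=0$ are applied consistently so that the rewritten last column is genuinely the stated linear combination of the $v_j$, and that the two factors of $(-1)^{r-1}$ really do cancel. One could instead regard the equations for $m=1,\dots,r$ as a unitriangular linear system in $\widetilde\sigma_{1^1},\dots,\widetilde\sigma_{1^r}$ and apply Cramer's rule; this yields the same determinant up to the sign of a permutation rearranging Cramer's matrix into the Jacobi–Trudi shape above, and I would present whichever is shorter to typeset.
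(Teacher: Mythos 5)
Your proof is correct, and while it starts from the same input as the paper --- the degree-by-degree expansion of the Whitney identity $c^T(S)c^T(Q)=1+\sum_{i=1}^n e_i$, i.e.\ $\sum_{i=0}^m(-1)^i\widetilde\sigma_{1^i}\widetilde\sigma_{m-i}=e_m$ --- the linear algebra is organized differently. The paper argues by induction on $r$: only the single degree-$2r$ relation is used at each step, rearranged so that (with the inductive hypothesis identifying the relevant minors as $\widetilde\sigma_{1^{r-j}}$) it becomes the cofactor expansion of $\det(\widetilde\sigma_{1+j-i}-\delta_{rj}e_{1+j-i})$ along the first row. You instead invoke the relations for all $m=1,\dots,r$ simultaneously to rewrite the last column of $M$ as $\sum_{l=1}^{r-1}(-1)^{l-1}\widetilde\sigma_{1^l}v_{r-l}+(-1)^{r-1}\widetilde\sigma_{1^r}v_0$, and finish with one multilinearity expansion (equivalently Cramer's rule, as you note), so no induction is needed. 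Your bookkeeping checks out: extending the sum from $l\le m$ to $l\le r$ is legitimate because $\widetilde\sigma_{m-l}=0$ for $l>m$, the repeated-column terms do vanish, $\det[v_1|\cdots|v_{r-1}|v_0]=(-1)^{r+1}=(-1)^{r-1}$ by expanding along $v_0$ against the unitriangular minor $(\widetilde\sigma_{j-i})_{1\le i,j\le r-1}$, and the two signs cancel; the base case $r=1$ is Equation (\ref{diff}) in both treatments. The trade-off is modest: the paper's induction needs one Whitney relation per step but requires recognizing the first-row minors of $M$ as the smaller matrices of the same shape, whereas your version gets the identity in a single stroke from the full triangular system of relations and keeps the sign analysis localized in one explicit determinant evaluation.
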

\begin{proof}
	We shall prove by induction on $r$, and assume that the formula is true for $k\leq r-1$. The base case $k=1$ is already proved above. By the Whitney product formula, we have
	\[(1-(\widetilde{\sigma}_1-e_1)+c_2^T(S^*)+\cdots+(-1)^ic_i^T(S^*)+\cdots+(-1)^kc_k^T(S^*))(1+\widetilde{\sigma}_1+\cdots+\widetilde{\sigma}_{n-k})=1+e_1+\cdots+e_n\]
	\[(1-(\widetilde{\sigma}_1-e_1)+\widetilde{\sigma}_{1^2}+\cdots+(-1)^i\widetilde{\sigma}_{1^i}+\cdots+(-1)^k\widetilde{\sigma}_{1^k})(1+\widetilde{\sigma}_1+\cdots+\widetilde{\sigma}_{n-k})=1+e_1+\cdots+e_n\]
	Comparing the degree $2r$ terms of both sides, we have
	\begin{align*}
		\widetilde{\sigma}_r-(\widetilde{\sigma}_1-e_1)\widetilde{\sigma}_{r-1}+\widetilde{\sigma}_{1^2}\widetilde{\sigma}_{r-2}+\cdots+(-1)^i\widetilde{\sigma}_{1^i}\widetilde{\sigma}_{r-i}+\cdots+(-1)^r\widetilde{\sigma}_{1^r}&=e_r\\
		(-1)^{r+1}(\widetilde{\sigma}_r-e_r)+(-1)^{r+2}(\widetilde{\sigma}_1-e_1)\widetilde{\sigma}_{r-1}+\cdots+(-1)^{r+1+i}\widetilde{\sigma}_{1^i}\widetilde{\sigma}_{r-i}+\cdots+(-1) ^{2r}\widetilde{\sigma}_{1^{r-1}}\widetilde{\sigma}_1&=\widetilde{\sigma}_{1^r}
	\end{align*}
	By the inductive hypothesis, the left-hand side of the last equation is the cofactor expansion of the matrix $(\widetilde{\sigma}_{1+j-i}-\delta_{rj}e_{1+j-i})_{1\leq i, j\leq r}$ along the first row. This completes the inductive step.
\end{proof}
\begin{remark}
	Proposition \ref{colgiam} can be viewed as an equivariant extension of the Giambelli formula for $\sigma_{1^r}$ in terms of the special Schubert classes. 
\end{remark}
Using the equivariant Schubert classes $\widetilde{\sigma}_{1^r}$, one can define equivariant lifts of general Schubert classes as characteristic classes of vector bundles constructed out of $S^*$. Let $\lambda$ be a Young diagram and $\ell(\lambda)$ the number of rows of $\lambda$. 
\begin{definition}\label{equivGiambelli}
	\begin{enumerate}
		\item Let $\widehat{\sigma}_r=\det(\widetilde{\sigma}_{1^{1+j-i}})_{1\leq i, j\leq r}$ (here we take the convention that $\widetilde{\sigma}_{1^r}=0$ if $r>k$ or $r<0$, and $\widetilde{\sigma}_{1^r}=1$ if $r=0$).
		\item For a general Young diagram $\lambda$ (not necessarily contained in the $k\times(n-k)$ rectangle), let $\widehat{\sigma}_\lambda$ be $\det(\widehat{\sigma}_{\lambda_i+j-i})_{1\leq i, j\leq \ell(\lambda)}$, where $\ell(\lambda)$ is the number of rows of $\lambda$.  
	\end{enumerate}
\end{definition}
The definition of $\widehat{\sigma}_\lambda$ is just Giambelli's formula in terms of the special equivariant Schubert classes $\widehat{\sigma}_r$. 

\begin{remark}\label{coincide}
	Since $\widehat{\sigma}_{1^r}=\det(\widehat{\sigma}_{1+j-i})_{1\leq i, j\leq r}$, we may regard $\widehat{\sigma}_r$ as the $r$-th completely symmetric polynomial and $\widehat{\sigma}_{1^r}$ the $r$-th elementary symmetric polynomial. By $\widehat{\sigma}_\lambda=\det(\widehat{\sigma}_{\lambda_i+j-i})_{1\leq i, j\leq \ell{\lambda}}$ and the two Giambelli's formulae for the Schur polynomial $s_\lambda$ (cf. \cite[Appendix A, Equations (A5) and (A6)]{FH}), we have that $\widehat{\sigma}_\lambda=\det\left(\widehat{\sigma}_{1^{\lambda_i^T+j-i}}\right)_{1\leq i, j\leq\ell(\lambda^T)}$. We also have $\widehat{\sigma}_{1^r}=\widetilde{\sigma}_{1^r}$. In particular, $\widehat{\sigma}_{1^1}=\widehat{\sigma}_1$
\end{remark}
\begin{corollary}\label{vanishing}
	$\widehat{\sigma}_\lambda=\det(\widehat{\sigma}_{\lambda_i+j-i})_{1\leq i, j\leq\ell(\lambda)}=0$ if $\ell(\lambda)>k$. 
\end{corollary}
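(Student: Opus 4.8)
The plan is to read off the vanishing from the conjugate (``dual'') Giambelli formula already recorded in Remark~\ref{coincide},
\[\widehat{\sigma}_\lambda=\det\left(\widehat{\sigma}_{1^{\lambda_i^T+j-i}}\right)_{1\leq i,j\leq\ell(\lambda^T)},\]
combined with the fact that $\widehat{\sigma}_{1^r}=\widetilde{\sigma}_{1^r}=c_r^T(S^*)$ vanishes as soon as $r>k$, because $S^*$ has rank $k$; this is exactly the convention fixed in Definition~\ref{equivGiambelli}(1). So the whole argument is to locate a zero row in the matrix on the right.

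First I would translate the hypothesis $\ell(\lambda)>k$ into the language of the conjugate partition: the first column of $\lambda$ has length $\lambda_1^T=\ell(\lambda)$, and $\ell(\lambda^T)=\lambda_1$. Then I would examine the first row $(i=1)$ of $\bigl(\widehat{\sigma}_{1^{\lambda_i^T+j-i}}\bigr)$: its $(1,j)$-entry is $\widehat{\sigma}_{1^{\lambda_1^T+j-1}}=\widetilde{\sigma}_{1^{\ell(\lambda)+j-1}}$, and since $j\geq 1$ the exponent satisfies $\ell(\lambda)+j-1\geq\ell(\lambda)>k$, so every entry of the first row is zero. A determinant with a zero row vanishes, hence $\widehat{\sigma}_\lambda=0$; the first equality in the statement is just the definition $\widehat{\sigma}_\lambda=\det(\widehat{\sigma}_{\lambda_i+j-i})_{1\leq i,j\leq\ell(\lambda)}$ from Definition~\ref{equivGiambelli}(2).

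There is essentially no obstacle here once Remark~\ref{coincide} is granted; the only point requiring a little care is the passage to the conjugate partition and keeping the vanishing convention straight. If one preferred a self-contained argument not relying on \cite{FH} for the two Giambelli formulae, the cleanest route is to observe that $\widehat{\sigma}_r$ and $\widehat{\sigma}_{1^r}$ obey the same relations $\sum_{i=0}^{r}(-1)^i\widehat{\sigma}_{1^i}\widehat{\sigma}_{r-i}=0$ $(r\geq 1)$ as the complete and elementary symmetric functions — this is precisely what the cofactor expansion of the determinant in Definition~\ref{equivGiambelli}(1) encodes, just as in the proof of Proposition~\ref{colgiam} — so that $\lambda\mapsto\widehat{\sigma}_\lambda$ factors through the standard ring homomorphism from the ring of symmetric functions in $k$ variables sending $e_r\mapsto\widehat{\sigma}_{1^r}$ and $h_r\mapsto\widehat{\sigma}_r$; the classical equality of the two Jacobi--Trudi determinants together with $s_\lambda=0$ for $\ell(\lambda)>k$ then gives the claim. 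Either way the proof is short.
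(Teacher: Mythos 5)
Your argument is correct and is essentially identical to the paper's own proof: both invoke the conjugate Giambelli expression $\widehat{\sigma}_\lambda=\det\bigl(\widetilde{\sigma}_{1^{\lambda_i^T+j-i}}\bigr)$ from Remark~\ref{coincide} and observe that the first row vanishes because $\lambda_1^T=\ell(\lambda)>k$ forces every entry $\widetilde{\sigma}_{1^{\lambda_1^T+j-1}}$ to be zero. The extra symmetric-function remark at the end is a fine alternative but not needed.
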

\begin{proof}
	By Remark \ref{coincide}, $\widehat{\sigma}_\lambda=\det\left(\widetilde{\sigma}_{1^{\lambda_i^T+j-i}}\right)_{1\leq i, j\leq\ell(\lambda^T)}$. Note that $\lambda_1^T=\ell(\lambda)>k$. Thus all entries $\widetilde{\sigma}_{1^{\lambda_1^T+j-1}}$ in the first row of the matrix are 0, and the result follows.
\end{proof}
\begin{remark}
	Let $\mathcal{S}_\lambda$ be the Schur functor corresponding to the Young diagram $\lambda$ which is applied to a vector bundle (cf. \cite[\S6.1]{FH}). By the (equivariant) splitting principle, we may assume that $S^*$ is the direct sum of equivariant line bundles $\displaystyle \bigoplus_{i=1}^k L_i$. Then 
\[\mathcal{S}_\lambda(S^*)=\bigoplus_{T\in\text{SSYT}(\lambda, k)}\bigotimes_{i\in T} L_i\]
where $\text{SSYT}(\lambda, k)$ is the set of semi-standard Young tableaux in the shape of $\lambda$ with labels $1, 2, \cdots, k$. For example, $\mathcal{S}_{1^r}(S^*)=\bigwedge\nolimits^r S^*$ and $\mathcal{S}_r(S^*)=\text{Sym}^r S^*$. Let $\displaystyle c_{\mathcal{S}_\lambda}^T(S^*):=\sum_{T\in\text{SSYT}(\lambda, k)}\prod_{i\in T}c_1^T(L_i)$, which is $s_\lambda(c_1^T(L_1), \cdots, c_1^T(L_k))$, where $s_\lambda$ is the Schur polynomial associated to $\lambda$. If $\lambda$ is a Young diagram inside the $k\times(n-k)$ rectangle, then it can be shown that $c_{\mathcal{S}_\lambda}^T(S^*)=\widehat{\sigma}_\lambda$. In fact Corollary \ref{vanishing} also follows from this observation because $\mathcal{S}_\lambda(S^*)$ is the zero vector bundle for $\ell(\lambda)>k$.  
\end{remark}
\begin{proposition}\label{ringstr}
	\begin{enumerate}
		\item\label{simplify} $\displaystyle\widetilde{\sigma}_r=\sum_{i=0}^r e_i\widehat{\sigma}_{r-i}$. Here $\widetilde{\sigma}_r=0$ if $r>n-k$ and $e_r=0$ if $r>n$. 
		\item \begin{align*}
				&H_T^*(X, \mathbb{Z})\\
				\cong&\frac{\mathbb{Z}[\widetilde{\sigma}_1, \widetilde{\sigma}_2, \cdots, \widetilde{\sigma}_{n-k}, \widetilde{\sigma}_{1^2}, \cdots, \widetilde{\sigma}_{1^k}, t_1, \cdots, t_n]}{\left((1-(\widetilde{\sigma}_1-e_1)+\widetilde{\sigma}_{1^2}+\cdots+(-1)^i\widetilde{\sigma}_{1^i}+\cdots+(-1)^k\widetilde{\sigma}_{1^k})(1+\widetilde{\sigma}_1+\widetilde{\sigma}_2+\cdots+\widetilde{\sigma}_{n-k})-1-\sum_{i=1}^n e_i\right)}\\
				\cong&\frac{\mathbb{Z}[\widetilde{\sigma}_1, \widetilde{\sigma}_2, \cdots, \widetilde{\sigma}_{n-k}, t_1, \cdots, t_n]}{(\widetilde{Y}_{k+1}, \widetilde{Y}_{k+2}, \cdots, \widetilde{Y}_{n})}\ (\text{where }\widetilde{Y}_r=\det(\widetilde{\sigma}_{1+j-i}-\delta_{rj}e_{1+j-i})_{1\leq i, j\leq r})\\
				\cong&\frac{\mathbb{Z}[\widehat{\sigma}_1, \widehat{\sigma}_2, \cdots, \widehat{\sigma}_{n-k}, t_1, \cdots, t_n]}{(\widehat{Y}_{k+1}, \widehat{Y}_{k+2}, \cdots, \widehat{Y}_{n})}\ (\text{where }\widehat{Y}_r=\det\left(\widehat{\sigma}_{1+j-i}\right)_{1\leq i, j\leq r}=\widetilde{Y}_r)
			\end{align*}
	\end{enumerate}
\end{proposition}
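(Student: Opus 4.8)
The plan is to handle the two assertions separately. Part~(1) is a formal consequence of the equivariant Whitney formula together with the Jacobi--Trudi shape of Definition~\ref{equivGiambelli}(1); part~(2) has three links, of which the first is the standard equivariant presentation of $H_T^*(X)$ and the remaining two are obtained from it by changes of algebra generators, using Proposition~\ref{colgiam} and part~(1).

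For part~(1), I would assemble the classes into Chern polynomials $c_t^T(-)=\sum_{r\ge0}c_r^T(-)\,t^r$. The $T$-equivariant Whitney formula for $0\to S\to\underline{\mathbb{C}^n}\to Q\to0$ reads $c_t^T(S)\,c_t^T(Q)=c_t^T(\underline{\mathbb{C}^n})=\prod_{i=1}^n(1+t_it)=\sum_{i=0}^n e_i\,t^i$. Since $c_r^T(S)=(-1)^r c_r^T(S^*)=(-1)^r\widetilde{\sigma}_{1^r}=(-1)^r\widehat{\sigma}_{1^r}$ (the last equality by Remark~\ref{coincide}), one has $c_t^T(S)=\sum_{r\ge0}(-1)^r\widehat{\sigma}_{1^r}t^r$; and the determinantal formula for $\widehat{\sigma}_r$ in Definition~\ref{equivGiambelli}(1) is exactly the Jacobi--Trudi identity writing a complete symmetric function in terms of the elementary ones, i.e.\ the formal power series identity $\bigl(\sum_{r\ge0}\widehat{\sigma}_r t^r\bigr)\bigl(\sum_{s\ge0}(-1)^s\widehat{\sigma}_{1^s}t^s\bigr)=1$. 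Combining these, $\sum_{r\ge0}\widehat{\sigma}_r t^r=c_t^T(S)^{-1}$, hence $\sum_{r\ge0}\widetilde{\sigma}_r t^r=c_t^T(Q)=\bigl(\sum_{i=0}^n e_i t^i\bigr)\bigl(\sum_{r\ge0}\widehat{\sigma}_r t^r\bigr)$, and comparing the coefficients of $t^r$ gives $\widetilde{\sigma}_r=\sum_{i=0}^r e_i\widehat{\sigma}_{r-i}$; the stated conventions are just that $c_r^T(Q)=0$ for $r>n-k$ (as $Q$ has rank $n-k$) and $e_i=0$ for $i>n$.

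For part~(2), the first isomorphism is the standard equivariant Borel presentation: by equivariant formality, $H_T^*(X,\mathbb{Z})$ is generated over $\mathbb{Z}[t_1,\dots,t_n]$ by the equivariant Chern classes of $S^*$ and of $Q$ subject only to the Whitney relation $c^T(S)c^T(Q)=1+\sum_{i=1}^n e_i$ (cf.\ \cite{GK,Ki}); writing out $c^T(Q)=1+\widetilde{\sigma}_1+\cdots+\widetilde{\sigma}_{n-k}$ and, via \eqref{diff} for the degree-one term, $c^T(S)=1-(\widetilde{\sigma}_1-e_1)+\widetilde{\sigma}_{1^2}-\cdots+(-1)^k\widetilde{\sigma}_{1^k}$, with $\widetilde{\sigma}_1,\dots,\widetilde{\sigma}_{n-k},\widetilde{\sigma}_{1^2},\dots,\widetilde{\sigma}_{1^k}$ as generators, gives the first quotient (the displayed inhomogeneous element being understood as generating the ideal of its homogeneous components). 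To reach the second presentation I would eliminate $\widetilde{\sigma}_{1^2},\dots,\widetilde{\sigma}_{1^k}$: the degree-$r$ component of the Whitney relation, for $2\le r\le k$, is precisely the identity in the proof of Proposition~\ref{colgiam}, which on rearranging exhibits $\widetilde{\sigma}_{1^r}$ as a cofactor expansion of $(\widetilde{\sigma}_{1+j-i}-\delta_{rj}e_{1+j-i})_{1\le i,j\le r}$, hence as a polynomial in the $\widetilde{\sigma}_j$ and the $t_i$; substituting these back, the surviving components in degrees $k+1$ through $n$ become, by the same manipulation, the relations $\widetilde{Y}_r=\det(\widetilde{\sigma}_{1+j-i}-\delta_{rj}e_{1+j-i})_{1\le i,j\le r}=0$. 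For the third presentation, part~(1) provides the unitriangular, hence over $\mathbb{Z}[t_1,\dots,t_n]$ invertible, change of generators $\widetilde{\sigma}_r\leftrightarrow\widehat{\sigma}_r$ defined by $\widetilde{\sigma}_r=\widehat{\sigma}_r+e_1\widehat{\sigma}_{r-1}+\cdots+e_r$; applying it (i.e.\ substituting $\widetilde{\sigma}_m=\sum_{p\ge0}e_p\widehat{\sigma}_{m-p}$, which for $m>n-k$ is the very recursion defining $\widehat{\sigma}_m$ there) carries $\widetilde{Y}_r$ to $\det\bigl(\sum_{p\ge0}e_p\widehat{\sigma}_{1+j-i-p}-\delta_{rj}e_{1+j-i}\bigr)_{1\le i,j\le r}$, and it remains to check that this equals $\det(\widehat{\sigma}_{1+j-i})_{1\le i,j\le r}=\widehat{Y}_r$.

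The step I expect to be the main obstacle is this last determinant identity, and its sibling inside the passage to the second presentation (identifying the degree-$r$ Whitney component literally with $\widetilde{Y}_r$). Both are elementary but fiddly column manipulations. For the former, the cleanest route is to note that in the matrix of $\det\bigl(\sum_p e_p\widehat{\sigma}_{1+j-i-p}-\delta_{rj}e_{1+j-i}\bigr)$ the $\delta_{rj}e_{1+j-i}$ correction in the last column is exactly what cancels the $\widehat{\sigma}_0$-term produced by the substitution, after which every column becomes a unitriangular combination of the columns of $(\widehat{\sigma}_{1+j-i})_{i,j}$ up to a rank-$\le2$ correction supported on the first row and last column, which leaves the determinant unchanged (matrix determinant lemma). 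Throughout one must keep the boundary conventions straight ($\widetilde{\sigma}_0=\widehat{\sigma}_0=1$, vanishing outside the admissible ranges, and the recursive meaning of $\widehat{\sigma}_j$ for $j>n-k$ in the last presentation).
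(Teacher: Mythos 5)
Your part (2) follows the same route as the paper: the Borel/Whitney presentation from equivariant formality, elimination of $\widetilde{\sigma}_{1^2},\dots,\widetilde{\sigma}_{1^k}$ via the degree components of the Whitney relation (Proposition \ref{colgiam}), and finally the triangular change of generators supplied by part (1). For part (1) you take a genuinely different path: the paper argues by induction, performing row operations on the Giambelli determinant of Definition \ref{equivGiambelli}(1) and comparing it with the determinant of Proposition \ref{colgiam}, whereas you package everything into Chern polynomials and use the standard equivalence between the Jacobi--Trudi determinant and the power-series inversion $\bigl(\sum_r\widehat{\sigma}_rt^r\bigr)c_t^T(S)=1$. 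This is cleaner than the paper's induction and delivers the boundary conventions ($\widetilde{\sigma}_r=0$ for $r>n-k$, $e_i=0$ for $i>n$) for free; it is a legitimate replacement, since the equivalence you invoke is exactly the classical relation between elementary and complete symmetric functions.

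The one place where your write-up falls short of a proof is the step you yourself flag: the identity $\det\bigl(\sum_pe_p\widehat{\sigma}_{1+j-i-p}-\delta_{rj}e_{1+j-i}\bigr)_{1\le i,j\le r}=\det(\widehat{\sigma}_{1+j-i})_{1\le i,j\le r}$. Your justification, that the substituted matrix differs from a unitriangular recombination of the columns of $(\widehat{\sigma}_{1+j-i})$ only by ``a rank-$\le2$ correction supported on the first row and last column, which leaves the determinant unchanged (matrix determinant lemma)'', is not a proof as stated: adding a rank-two matrix does in general change a determinant, and the matrix determinant lemma merely converts the question into a cofactor computation that still has to be carried out. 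Concretely, writing $A$ for the substituted matrix, $B=(\widehat{\sigma}_{1+j-i})$ and $U$ the unitriangular matrix with $U_{mj}=e_{j-m}$, one finds $A=BU+R$ with $R_{1j}=e_j$ for $j<r$, $R_{1r}=0$ and $R_{ir}=-e_{1+r-i}$ for $i\ge2$, so a genuine cancellation must be verified. The quickest correct argument is the generating-function one you already used in part (1): with $E(u)=\sum_{i=0}^ne_iu^i$, the cofactor recursion underlying the proof of Proposition \ref{colgiam} gives $\sum_r(-1)^r\widetilde{Y}_ru^r=E(u)\bigl(\sum_m\widetilde{\sigma}_mu^m\bigr)^{-1}$, while the dual Jacobi--Trudi identity gives $\sum_r(-1)^r\widehat{Y}_ru^r=\bigl(\sum_m\widehat{\sigma}_mu^m\bigr)^{-1}$ (using your recursive convention for $\widehat{\sigma}_m$, $m>n-k$); since part (1) says $\sum_m\widetilde{\sigma}_mu^m=E(u)\sum_m\widehat{\sigma}_mu^m$, the two series coincide and $\widetilde{Y}_r=\widehat{Y}_r$. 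To be fair, the paper itself only asserts this equality, so you are not below its level of detail; moreover your two key observations, that the $\delta_{rj}e_{1+j-i}$ term cancels exactly the $\widehat{\sigma}_0$ contribution in the last column and that $\widehat{\sigma}_m$ for $m>n-k$ must be read recursively in the last presentation, are both correct and are precisely what make the statement true. Only the advertised mechanism for the final determinant identity should be completed or replaced.
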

\begin{proof}
	\begin{enumerate}
		\item We prove by induction. The base case $\widetilde{\sigma}_1=\widehat{\sigma}_1+e_1$ holds by Remark \ref{coincide} and Equation (\ref{diff}). Assume that the equation $\widetilde{\sigma}_j=\sum_{i=0}^je_i\widehat{\sigma}_{j-i}$ holds for $1\leq j\leq r-1$. Using Definition \ref{equivGiambelli}, we write out Giambelli's formula for $\widehat{\sigma}_{1^r}$,
		\[\widehat{\sigma}_{1^r}=\left|\begin{matrix}\widehat{\sigma}_1&\widehat{\sigma}_2&\cdots&\widehat{\sigma}_{r-1}&\widehat{\sigma}_r\\ 1&\widehat{\sigma}_1&\cdots&\widehat{\sigma}_{r-2}&\widehat{\sigma}_{r-1}\\ \vdots&\vdots&\ddots&\vdots&\vdots\\ 0&0&\cdots&\widehat{\sigma}_1&\widehat{\sigma}_2\\ 0&0&\cdots&1&\widehat{\sigma}_1\end{matrix}\right|,\]
		and apply the following row operations in the indicated order.
		\begin{enumerate}
			\item To the $(r-1)$-st row, add $e_1\cdot r\text{-th row}$. 
			\item To the $(r-2)$-nd row, add $e_2\cdot r\text{-th row}+ e_1\cdot(r-1)\text{-st row}$.
			\item In general, to the $(r-j)$-th row, add $\displaystyle\sum_{i=0}^{j-1}e_{j-i}\cdot (r-i)\text{-th row}$.
		\end{enumerate}
		After applying the above row operations for $j$ ranging from 1 to $r-1$, and using the inductive hypothesis, we end up with the determinant
		\[\left|\begin{matrix}\widetilde{\sigma}_1&\widetilde{\sigma}_2&\cdots&\widetilde{\sigma}_{r-1}&\sum_{j=1}^r e_{r-j}\widetilde{\sigma}_j\\ 1&\widetilde{\sigma}_1&\cdots&\widetilde{\sigma}_{r-2}&\widetilde{\sigma}_{r-1}-e_{r-1}\\ \vdots&\vdots&\ddots&\vdots&\vdots\\ 0&0&\cdots&\widetilde{\sigma}_1&\widetilde{\sigma}_2-e_2\\ 0&0&\cdots& 1&\widetilde{\sigma}_1-e_1\end{matrix}\right|.\]
		By Remark \ref{coincide} and Proposition \ref{colgiam}, the class $\widetilde{\sigma}_{1^r}$ is also the determinant
		\[\left|\begin{matrix}\widetilde{\sigma}_1&\widetilde{\sigma}_2&\cdots&\widetilde{\sigma}_{r-1}&\widetilde{\sigma}_r-e_r\\ 1&\widetilde{\sigma}_1&\cdots&\widetilde{\sigma}_{r-2}&\widetilde{\sigma}_{r-1}-e_{r-1}\\ \vdots&\vdots&\ddots&\vdots&\vdots\\ 0&0&\cdots&\widetilde{\sigma}_1&\widetilde{\sigma}_2-e_2\\ 0&0&\cdots& 1&\widetilde{\sigma}_1-e_1\end{matrix}\right|.\]
		Taking the difference between these two determinants gives
		\[\left|\begin{matrix}0&0&\cdots&0&\widetilde{\sigma}_r-e_r-\sum_{j=1}^r e_{r-j}\widetilde{\sigma}_j\\ 1&\widetilde{\sigma}_1&\cdots&\widetilde{\sigma}_{r-2}&\widetilde{\sigma}_{r-1}-e_{r-1}\\ \vdots&\vdots&\ddots&\vdots&\vdots\\ 0&0&\cdots&\widetilde{\sigma}_1&\widetilde{\sigma}_2-e_2\\ 0&0&\cdots& 1&\widetilde{\sigma}_1-e_1\end{matrix}\right|=0.\]
		Applying the cofactor expansion along the first row immediately yields $\widetilde{\sigma}_r-e_r-\sum_{j=1}^r e_{r-j}\widetilde{\sigma}_j=0$, which completes the inductive step.
		\item It is known that the ordinary cohomology ring of $X$ is generated by the special Schubert classes and classes corresponding to a single column. In particular the ring is isomorphic to 
		\[\frac{\mathbb{Z}[\sigma_1, \sigma_2, \cdots, \sigma_{n-k}, \sigma_{1^2}, \cdots, \sigma_{1^k}]}{((1-\sigma_1+\cdots+(-1)^i\sigma_{1^i}+\cdots+(-1)^k\sigma_{1^k})(1+\sigma_1+\cdots+\sigma_{n-k})-1)}\]
		(cf. \cite[Proposition 23.2]{BT}). By the equivariant formality of the $T$-action on $X$, the equivariant cohomology ring $H_T^*(X, \mathbb{Z})$ is a $H_T^*(\text{pt}, \mathbb{Z})$-algebra generated by the equivariant lifts $\widetilde{\sigma}_1, \widetilde{\sigma}_2, \cdots, \widetilde{\sigma}_{n-k}, \widetilde{\sigma}_{1^2}, \cdots, \widetilde{\sigma}_{1^k}$, with the relation $\displaystyle c^T(S)c^T(Q)=c^T(\underline{\mathbb{C}^n})=1+\sum_{i=1}^n e_i$ which lifts the generator of the relation ideal of the above ring. That shows the first isomorphism. The second isomorphism follows by noting that $\widetilde{\sigma}_{1^2}, \cdots, \widetilde{\sigma}_{1^k}$ and the generator of the relation ideal can be expressed in terms of the special Schubert classes $\widetilde{\sigma}_i$, $1\leq i\leq n-k$ by Proposition \ref{colgiam}. One can instead use $\{\widehat{\sigma}_i\}_{i=1}^{n-k}$ as generators since by (\ref{simplify}) of this Proposition, $\{\widetilde{\sigma}_i\}_{i=1}^{n-k}$ and $\{\widehat{\sigma}_i\}_{i=1}^{n-k}$ can be expressed in terms of each other. Rewriting the generators of the relation ideal, which are basically $\widetilde{\sigma}_{1^i}(=\widehat{\sigma}_{1^i})$, $k+1\leq i\leq n$, in terms of $\{\widehat{\sigma}_i\}_{i=1}^{n-k}$, we get the last isomorphism.
	\end{enumerate}
\end{proof}
Because of the way the equivariant Schubert classes $\widehat{\sigma}_\lambda$ is defined, we can compute the product $\widehat{\sigma}_\lambda\cdot\widehat{\sigma}_\mu$ as if they were ordinary Schubert classes of $\text{Gr}(k, \infty)$ (we may just restrict our attention to those terms in the product with Young diagrams occupying less than or equal to $k$ rows because of Corollary \ref{vanishing}) using the Littlewood-Richardson rule, Pieri's formula and Giambelli's formula, and use  Proposition \ref{ringstr}(1) to reduce the product to a linear combination of those classes $\widehat{\sigma}_\nu$ where $\nu$ is contained in the $k\times(n-k)$ rectangle.
\begin{example}\label{equivexam}
	Let $X=\text{Gr}(2, 5)$. To compute $\widehat{\sigma}_{2, 1}\cdot\widehat{\sigma}_2$, we first apply Pieri's rule to the product as if the equivariant Schubert classes were ordinary ones for $\text{Gr}(2, \infty)$ and get $\widehat{\sigma}_{4, 1}+\widehat{\sigma}_{3, 2}$. We rewrite $\widehat{\sigma}_{4, 1}$ as $\widehat{\sigma}_4\cdot\widehat{\sigma}_1-\widehat{\sigma}_5$ by Giambelli's formula. By Proposition \ref{ringstr}(1) and Pieri's formula, we can further rewrite $\widehat{\sigma}_4\cdot\widehat{\sigma}_1-\widehat{\sigma}_5$ as 
	\begin{align*}
		&(-e_4-e_3\widehat{\sigma}_1-e_2\widehat{\sigma}_2-e_1\widehat{\sigma}_3)\cdot\widehat{\sigma}_1+e_5+e_4\widehat{\sigma}_1+e_3\widehat{\sigma}_2+e_2\widehat{\sigma}_3+e_1\widehat{\sigma}_4\\
		=&-e_4\widehat{\sigma}_1-e_3(\widehat{\sigma}_{1^2}+\widehat{\sigma}_2)-e_2(\widehat{\sigma}_3+\widehat{\sigma}_{2, 1})-e_1(\widehat{\sigma}_4+\widehat{\sigma}_{3, 1})+e_5+e_4\widehat{\sigma}_1+e_3\widehat{\sigma}_2+e_2\widehat{\sigma}_3+e_1\widehat{\sigma}_4\\
		=&e_5-e_3\widehat{\sigma}_{1^2}-e_2\widehat{\sigma}_{2, 1}-e_1\widehat{\sigma}_{3, 1}.
	\end{align*}
	So we have
	\[\widehat{\sigma}_{2, 1}\cdot\widehat{\sigma}_2=\widehat{\sigma}_{3, 2}-e_1\widehat{\sigma}_{3, 1}-e_2\widehat{\sigma}_{2, 1}-e_3\widehat{\sigma}_{1^2}+e_5.\]
\end{example}
\begin{theorem}[Equivariant Pieri's formula]\label{equivpieri}
	Let $\lambda$ be a Young diagram with $\ell$ rows in the $k\times(n-k)$ rectangle. Then 
	\begin{align*}
		\widehat{\sigma}_\lambda\cdot\widehat{\sigma}_1&=\sum_{\lambda'\in\lambda\oplus 1}\widehat{\sigma}_{\lambda'}+\delta_{\lambda_1, n-k}\left(\sum_{j=0}^{\ell-2}(-1)^{\ell-j}\sum_{m=\lambda_{\ell-j+1}}^{\lambda_{\ell-j}-1}e_{n-k+\ell-m-j}\widehat{\sigma}_{\lambda_2-1, \lambda_3-1, \cdots, \lambda_{\ell-j}-1, m, \lambda_{\ell-j+1}, \cdots, \lambda_\ell}\right.\\
		&\left.-\sum_{m=\lambda_2}^{n-k}e_{n-k-m+1}\cdot\widehat{\sigma}_{m, \lambda_2, \cdots, \lambda_\ell}\right)
	\end{align*}
	where $\lambda\oplus 1$ is the set of Young diagrams in the $k\times (n-k)$ rectangle obtained by adding to $\lambda$ one box.
\end{theorem}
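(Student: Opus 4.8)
The plan is to reduce the whole statement to a single ``straightening identity'' by invoking the classical Pieri rule, and then to prove that identity by a determinantal induction. As recalled before Example \ref{equivexam}, the classes $\widehat\sigma_\lambda$ multiply among themselves exactly like Schubert classes of $\text{Gr}(k,\infty)$; hence classical Pieri gives $\widehat\sigma_\lambda\cdot\widehat\sigma_1=\sum_{\lambda'}\widehat\sigma_{\lambda'}$, where $\lambda'$ runs over all Young diagrams obtained from $\lambda$ by adjoining a single box. These $\lambda'$ split into: those inside the $k\times(n-k)$ rectangle, which are exactly the members of $\lambda\oplus1$; those with more than $k$ rows, which vanish by Corollary \ref{vanishing}; and, present precisely when $\lambda_1=n-k$, the single diagram $(n-k+1,\lambda_2,\dots,\lambda_\ell)$ coming from lengthening the already-maximal first row. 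The theorem is therefore equivalent to the identity
\[\widehat\sigma_{(n-k+1,\lambda_2,\dots,\lambda_\ell)}=\sum_{j=0}^{\ell-2}(-1)^{\ell-j}\sum_{m=\lambda_{\ell-j+1}}^{\lambda_{\ell-j}-1}e_{n-k+\ell-m-j}\,\widehat\sigma_{\lambda_2-1,\dots,\lambda_{\ell-j}-1,m,\lambda_{\ell-j+1},\dots,\lambda_\ell}\;-\;\sum_{m=\lambda_2}^{n-k}e_{n-k-m+1}\,\widehat\sigma_{m,\lambda_2,\dots,\lambda_\ell}\]
in $H_T^*(X,\mathbb Z)$, under the hypothesis $\lambda_1=n-k$; when $\lambda_1<n-k$ the Kronecker delta is zero and nothing further is needed.

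To prove the straightening identity I would induct on $\ell$ (observing that both sides are unaffected by appending a zero part to $\lambda$, which lets the induction strip the bottom row). The base case $\lambda=(n-k)$ is simply the relation $\widehat\sigma_{n-k+1}=-\sum_{i\ge1}e_i\widehat\sigma_{n-k+1-i}=-\sum_{m=0}^{n-k}e_{n-k+1-m}\widehat\sigma_m$, which follows from $\widetilde\sigma_{n-k+1}=c^T_{n-k+1}(Q)=0$ (as $Q$ has rank $n-k$) together with Proposition \ref{ringstr}(\ref{simplify}). For the inductive step, expand $\widehat\sigma_{(n-k+1,\lambda_2,\dots,\lambda_\ell)}$ by the Giambelli determinant of Definition \ref{equivGiambelli}, $\det(\widehat\sigma_{\mu_i+j-i})_{1\le i,j\le\ell}$ with $\mu_1=n-k+1$ and $\mu_i=\lambda_i$ otherwise. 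The first row has all indices $>n-k$; one reduces it using the same relation, either by substituting $\widehat\sigma_r=\widetilde\sigma_r-\sum_{i\ge1}e_i\widehat\sigma_{r-i}$ (so that the sub-determinant built from the $\widetilde\sigma$-entries vanishes, having first row zero) and then expanding by multilinearity and sorting column indices, or, more explicitly, by cofactor expansion along the first column,
\[\widehat\sigma_{(n-k+1,\lambda_2,\dots,\lambda_\ell)}=\widehat\sigma_{n-k+1}\,\widehat\sigma_{(\lambda_2,\dots,\lambda_\ell)}-\sum_{i=2}^{\ell}(-1)^i\,\widehat\sigma_{\lambda_i-i+1}\,\widehat\sigma_{(n-k+2,\lambda_2+1,\dots,\lambda_{i-1}+1,\lambda_{i+1},\dots,\lambda_\ell)},\]
followed by the substitution $\widehat\sigma_{n-k+1}=-\sum_m e_{n-k+1-m}\widehat\sigma_m$, classical Pieri applied to the products $\widehat\sigma_m\widehat\sigma_{(\lambda_2,\dots,\lambda_\ell)}$, and the inductive hypothesis applied (in a suitably generalized form covering the further out-of-rectangle classes so produced) to the remaining terms. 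Either way one finishes by collecting terms and reindexing.

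The determinant manipulations and the Pieri expansions are routine; the genuine obstacle is the bookkeeping---verifying that, after all cancellations, every contribution lands in one of the two displayed sums with the correct sign and multiplicity, and that the nested straightening really does terminate. I expect the sum $-\sum_{m=\lambda_2}^{n-k}e_{n-k-m+1}\widehat\sigma_{m,\lambda_2,\dots,\lambda_\ell}$ to be exactly the part of the substitution $\widehat\sigma_{n-k+1}\mapsto-\sum_m e_{n-k+1-m}\widehat\sigma_m$ for which $(m,\lambda_2,\dots,\lambda_\ell)$ is still a partition (i.e. $m\ge\lambda_2$), and the alternating double sum over $j$ to encode the successive ``push-downs'' that repair the illegal configurations with $m<\lambda_2$: pushing past row $\ell-j$ of $\lambda$ lowers rows $2,\dots,\ell-j$ by one, inserts the new part $m$ in the admissible range $\lambda_{\ell-j+1}\le m\le\lambda_{\ell-j}-1$, costs one more sign, and---by a degree count---forces the elementary-symmetric coefficient $e_{n-k+\ell-m-j}$. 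Checking that the signs and multiplicities of all these terms match the asserted formula, most cleanly via the induction above with the trailing-zero invariance used at each step, is the technical heart of the argument.
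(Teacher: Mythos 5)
Your opening reduction is exactly the one the paper uses: classical Pieri for $\text{Gr}(k,\infty)$, Corollary \ref{vanishing} to dispose of diagrams with more than $k$ rows, and the single out-of-rectangle class $\widehat{\sigma}_{n-k+1,\lambda_2,\dots,\lambda_\ell}$ when $\lambda_1=n-k$; and your ``route (a)'' is in substance the paper's proof of the remaining straightening identity. The paper replaces each entry of the first Giambelli row by $\widehat{\sigma}_r=-\sum_{i\geq 1}e_i\widehat{\sigma}_{r-i}$ (valid for $r>n-k$ since $\widetilde{\sigma}_r=0$, by Proposition \ref{ringstr}(\ref{simplify})), expands by multilinearity into $-\sum_p e_{n-k+\ell-p}\det M_p$ where $M_p$ has first row $(\widehat{\sigma}_{p-\ell+1},\dots,\widehat{\sigma}_p)$, and then sorts \emph{rows} (not columns, as you wrote): moving the first row down to position $\ell-j$ costs $(-1)^{\ell-j-1}$, identifies $\det M_p$ as $\pm\widehat{\sigma}_{\lambda_2-1,\dots,\lambda_{\ell-j}-1,m,\lambda_{\ell-j+1},\dots,\lambda_\ell}$ with $m=p-j$ constrained to $\lambda_{\ell-j+1}\le m\le\lambda_{\ell-j}-1$, while unsortable $p$ give repeated rows and vanish and the ``row stays put'' case yields $-\sum_{m=\lambda_2}^{n-k}e_{n-k-m+1}\widehat{\sigma}_{m,\lambda_2,\dots,\lambda_\ell}$. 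No induction on $\ell$ is needed.

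The genuine gap is that you never carry this out: the signs $(-1)^{\ell-j}$, the coefficients $e_{n-k+\ell-m-j}$, and the ranges of $m$ --- which \emph{are} the statement being proved --- appear in your write-up only as expectations (``I expect\dots'', ``by a degree count''), explicitly deferred as ``the technical heart''. Moreover, the inductive scheme you lean on does not close as formulated: cofactor expansion along the first column produces minors $\widehat{\sigma}_{n-k+2,\lambda_2+1,\dots,\lambda_{i-1}+1,\lambda_{i+1},\dots,\lambda_\ell}$ whose first part is $n-k+2$, so the stated hypothesis (first part $n-k+1$) does not apply; the ``suitably generalized form'' you invoke is never formulated, and proving it amounts to the direct straightening computation anyway, while the cancellations among the Pieri products $\widehat{\sigma}_m\cdot\widehat{\sigma}_{\lambda_2,\dots,\lambda_\ell}$ are left untracked. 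In short, the method is right and your base case is correct, but the derivation of the formula itself is missing; executing your route (a), with rows permuted as above, completes the proof in a few lines.
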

\begin{proof}
	By the classical Pieri's formula, $\displaystyle\widehat{\sigma}_\lambda\cdot\widehat{\sigma}_1=\sum_{\lambda'\in\lambda\oplus 1}\widehat{\sigma}_{\lambda'}$ if $\lambda'<n-k$, and $\displaystyle\widehat{\sigma}_\lambda\cdot\widehat{\sigma}_1=\sum_{\lambda'\in\lambda\oplus 1}\widehat{\sigma}_{\lambda'}+\widehat{\sigma}_{n-k+1, \lambda_2, \cdots, \lambda_\ell}$ if $\lambda_1=n-k+1$. By Giambelli's formula, and Proposition \ref{ringstr}(\ref{simplify}), 
	\begin{align*}
		\widehat{\sigma}_{n-k+1, \lambda_2, \cdots, \lambda_\ell}&=\left|\begin{matrix}\widehat{\sigma}_{n-k+1}&\widehat{\sigma}_{n-k+2}&\cdots&\widehat{\sigma}_{n-k+\ell}\\ \widehat{\sigma}_{\lambda_2-1}&\widehat{\sigma}_{\lambda_2}&\cdots&\widehat{\sigma}_{\lambda_2+\ell-2}\\ \vdots&\vdots&\ddots&\vdots\\ \widehat{\sigma}_{\lambda_\ell-\ell+1}&\widehat{\sigma}_{\lambda_\ell-\ell+2}&\cdots&\widehat{\sigma}_{\lambda_\ell}\end{matrix}\right|\\
		&=\left|\begin{matrix}-\sum_{j=0}^{n-k}e_{n-k+1-j}\widehat{\sigma}_j&-\sum_{j=0}^{n-k+1}e_{n-k+2-j}\widehat{\sigma}_j&\cdots&-\sum_{j=0}^{n-k+\ell+1}e_{n-k+\ell-j}\widehat{\sigma}_j\\ \widehat{\sigma}_{\lambda_2-1}&\widehat{\sigma}_{\lambda_2}&\cdots&\widehat{\sigma}_{\lambda_2+\ell-2}\\ \vdots&\vdots&\ddots&\vdots\\ \widehat{\sigma}_{\lambda_\ell-\ell+1}&\widehat{\sigma}_{\lambda_\ell-\ell+2}&\cdots&\widehat{\sigma}_{\lambda_\ell}\end{matrix}\right|\\
		&=-\sum_{p=0}^{n-1}e_{n-k+\ell-p}\left|\begin{matrix}\widehat{\sigma}_{p-\ell+1}&\widehat{\sigma}_{p-\ell+2}&\cdots&\widehat{\sigma}_p\\ \widehat{\sigma}_{\lambda_2-1}&\widehat{\sigma}_{\lambda_2}&\cdots&\widehat{\sigma}_{\lambda_2+\ell-2}\\ \vdots&\vdots&\ddots&\vdots\\ \widehat{\sigma}_{\lambda_\ell-\ell+1}&\widehat{\sigma}_{\lambda_\ell-\ell+2}&\cdots&\widehat{\sigma}_{\lambda_\ell}\end{matrix}\right|
	\end{align*}
	Rewriting the last sum as a sum over $j$ where $\ell-j$ is the row to which the first row vector $(\widehat{\sigma}_{p-\ell+1}, \widehat{\sigma}_{p-\ell+2}\cdots, \widehat{\sigma}_p)$ (here $\widehat{\sigma}_i=0$ if $i<0$) is moved by row operation so that the subscripts of the diagonal entries of the new matrix is in descending order down the row, and applying Giambelli's formula, we get the second term of the RHS of the equivariant Pieri's formula. 
\end{proof}
As is the case for classical Schubert calculus, there is also a duality isomorphism between the equivariant cohomology rings of $X$ and $X':=\text{Gr}(n-k, n)$. Let 
\begin{align*}
	\text{inv}: X'&\to X\\
	[V]&\mapsto[V^\perp],
\end{align*}
where the perp is taken with respect to the standard Hermitian inner product of $\mathbb{C}^n$.
\begin{definition}\label{dualchar}
	Define $\sigma_{1^r}':=\det(\widetilde{\sigma}_{1+j-i})_{1\leq i, j\leq r}$, and for a general Young diagram $\lambda$ (not necessarily contained in the $k\times(n-k)$ rectangle), 
	\[\sigma'_\lambda:=\det\left(\sigma'_{1^{\lambda_i^T+j-i}}\right)_{1\leq i, j\leq\ell(\lambda^T)}, \]
	where $\lambda^T$ is the conjugate Young diagram of $\lambda$. 
\end{definition}
\begin{proposition}\label{dual}
	The map $\text{inv}$ induces an isomorphism of equivariant cohomology rings
	\[\text{inv}^*: H_T^*(X, \mathbb{Z})\to H_T^*(X', \mathbb{Z})\]
	and $\text{inv}^*(\widehat{\sigma}_\lambda)=\sigma'_{\lambda^T}$, $\text{inv}^*(t_i)=t_i$ for $1\leq i\leq n$. 
\end{proposition}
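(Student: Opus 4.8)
The plan is to reduce the statement to a single geometric computation — the effect of $\mathrm{inv}$ on the tautological bundles — and then let the fact that $\mathrm{inv}^*$ is a ring homomorphism carry the rest.

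First I would record that $\mathrm{inv}$ is a $T$-equivariant diffeomorphism. Since $T=(S^1)^n$ acts on $\mathbb{C}^n$ through unitary scalars, the standard Hermitian form is $T$-invariant, so $V\mapsto V^\perp$ is $T$-equivariant, and it is a diffeomorphism with inverse $[W]\mapsto [W^\perp]$ (both perps taken in the same $\mathbb{C}^n$). Hence $\mathrm{inv}^*\colon H_T^*(X,\mathbb{Z})\to H_T^*(X',\mathbb{Z})$ is an isomorphism of graded rings, and since $\mathrm{inv}$ covers the identity on a point it is $H_T^*(\mathrm{pt},\mathbb{Z})$-linear; this already gives $\mathrm{inv}^*(t_i)=t_i$.

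The crux is the identification of pullback bundles. Over $[W]\in X'$ the fiber of $\mathrm{inv}^* S$ is $S_{[W^\perp]}=W^\perp\subseteq\mathbb{C}^n$, which the $T$-invariant form identifies $T$-equivariantly with $\mathbb{C}^n/W$ up to a dual, i.e. $\mathrm{inv}^* S^*$ is $T$-equivariantly isomorphic to the quotient bundle $Q'$ of $X'$ (and symmetrically $\mathrm{inv}^* Q\cong (S')^*$). By naturality of equivariant Chern classes this gives $\mathrm{inv}^*(\widetilde\sigma_{1^r})=\mathrm{inv}^*\!\big(c_r^T(S^*)\big)=c_r^T(Q')$ for every $r$. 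I expect this bundle identification to be the only delicate point: $\mathrm{inv}$ is merely anti-holomorphic, so one must be careful about exactly which twist (bundle versus dual) the Hermitian form produces when it turns $W^\perp$ into a quotient, so that $\mathrm{inv}^*(c_r^T(S^*))=c_r^T(Q')$ holds on the nose with no stray sign.

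Granting this, the remainder is formal. Since $\mathrm{inv}^*$ is a ring homomorphism fixing $H_T^*(\mathrm{pt},\mathbb{Z})$, it commutes with all the determinantal (Giambelli) expressions. By Remark \ref{coincide} and Definition \ref{equivGiambelli}, $\widehat\sigma_r=\det\big(c_{1+j-i}^T(S^*)\big)_{1\le i,j\le r}$, so applying $\mathrm{inv}^*$ and the previous step yields $\mathrm{inv}^*(\widehat\sigma_r)=\det\big(c_{1+j-i}^T(Q')\big)_{1\le i,j\le r}=\sigma'_{1^r}$ by Definition \ref{dualchar}. Feeding this into $\widehat\sigma_\lambda=\det(\widehat\sigma_{\lambda_i+j-i})_{1\le i,j\le\ell(\lambda)}$ and using $(\lambda^T)^T=\lambda$ gives $\mathrm{inv}^*(\widehat\sigma_\lambda)=\det\big(\sigma'_{1^{\lambda_i+j-i}}\big)_{1\le i,j\le\ell(\lambda)}=\sigma'_{\lambda^T}$, again directly from Definition \ref{dualchar}. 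As $\mathrm{inv}^*$ is bijective it therefore carries $\{\widehat\sigma_\lambda\}$ onto $\{\sigma'_{\lambda^T}\}$, which finishes the proof; everything past the bundle computation is bookkeeping with determinants.
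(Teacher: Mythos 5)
You follow the paper's own proof essentially step for step (equivariant diffeomorphism, identification of tautological bundles, Giambelli determinant bookkeeping), and the formal part at the end is fine. The genuine problem is the one step you explicitly flag and then defer: for the Hermitian perpendicular the identification does \emph{not} come out as $\mathrm{inv}^*S^*\cong Q'$. Because the Hermitian form is $T$-invariant, orthogonal projection $\mathbb{C}^n\to W^\perp$ is complex-linear and $T$-equivariant and identifies $\mathbb{C}^n/W$ with $W^\perp$ directly; that is, $\mathrm{inv}^*S\cong Q'$ as $T$-equivariant complex vector bundles (the conjugate-linear map $v\mapsto h(\,\cdot\,,v)$ identifies $W^\perp$ with $\overline{(\mathbb{C}^n/W)^{*}}$, which is again isomorphic to $\mathbb{C}^n/W$, so the dual twist you hoped for does not survive). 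Hence $\mathrm{inv}^*\bigl(c_r^T(S^*)\bigr)=c_r^T\bigl((Q')^{*}\bigr)=(-1)^r c_r^T(Q')$, and your own determinant computation then produces $\mathrm{inv}^*(\widehat{\sigma}_\lambda)=(-1)^{|\lambda|}\sigma'_{\lambda^T}$ rather than the unsigned formula.

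This is not a removable subtlety: with $\mathrm{inv}^*(t_i)=t_i$ the unsigned formula is impossible for \emph{any} ring homomorphism. Take $X=X'=\mathrm{Gr}(1,2)$: the class $\widehat{\sigma}_1=c_1^T(S^*)$ restricts to $(-t_1,-t_2)$ at the two fixed points, so $(\widehat{\sigma}_1+t_1)(\widehat{\sigma}_1+t_2)=0$, while $\sigma'_1=c_1^T(Q')$ restricts to $(t_2,t_1)$ and satisfies $(\sigma'_1-t_1)(\sigma'_1-t_2)=0$ but not the former relation; hence no $\mathbb{Z}[t_1,t_2]$-algebra map can send $\widehat{\sigma}_1$ to $\sigma'_1$. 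The two internally consistent options are: keep the Hermitian perp, in which case $\mathrm{inv}^*(t_i)=t_i$ but $\mathrm{inv}^*(\widehat{\sigma}_\lambda)=(-1)^{|\lambda|}\sigma'_{\lambda^T}$; or replace it by the annihilator with respect to the complex bilinear form $\sum_i v_iw_i$, which is holomorphic and gives $\mathrm{inv}^*S\cong (Q')^{*}$ on the nose, but intertwines the torus action with $t\mapsto t^{-1}$, so $\mathrm{inv}^*(t_i)=-t_i$. In fairness, the paper's proof asserts $\mathrm{inv}^*S^*=Q'$ with no more justification than you offer, so you have reproduced its argument together with its weak point; and either corrected version still yields Proposition \ref{qduality}, since the extra signs amount to substituting $t_i\mapsto -t_i$ into the equivariant Littlewood--Richardson coefficients, which the forgetful homomorphism converts back into the same quantum coefficients. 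But as submitted, the assertion that $\mathrm{inv}^*(c_r^T(S^*))=c_r^T(Q')$ holds with no stray sign is precisely the claim that needs proof, and for the map as defined it is false.
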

\begin{proof}
	The map $\text{inv}$ is a $T$-equivariant diffeomorphism and so the induced map is an isomorphism of equivariant cohomology rings. Note that $\text{inv}^*S^*=Q'$, where $Q'$ is the universal quotient bundle of $X'$. It follows that
	\begin{align*}
		\text{inv}^*\widetilde{\sigma}_{1^r}&=\text{inv} ^* c_r^T(S^*)\\
									&=c_r ^T(Q')\\
									&=\widetilde{\sigma}_r\\
									&=\sigma'_r\\
		\text{inv} ^*\widehat{\sigma}_r&=\text{inv} ^*\det(\widehat{\sigma}_{1^{1+j-i}})_{1\leq i, j\leq r}\\
								&=\det(\sigma'_{1+j-i})_{1\leq i, j\leq r}\\
								&=\sigma'_{1^r}\\
		\text{inv} ^*\widehat{\sigma}_\lambda&=\text{inv} ^*\det(\widehat{\sigma}_{\lambda_i+j-i})_{1\leq i, j\leq \ell(\lambda)}\\
									&=\det\left(\sigma'_{1^{\lambda_i+j-i}}\right)_{1\leq i, j\leq \ell(\lambda)}\\
									&=\sigma'_{\lambda^T}
	\end{align*}
\end{proof}
\begin{remark}\label{dualprop}
Proposition \ref{dual} allows one to get the dual version of previous results.
\begin{enumerate}
	\item Applying $\text{inv}^*$ to the algebra presentation of $H_T^*(X, \mathbb{Z})$ in Proposition \ref{ringstr} gives
	\[H_T^*(X', \mathbb{Z})\cong\frac{\mathbb{Z}[\sigma'_1, \sigma'_{1^2}, \cdots, \sigma'_{1^{n-k}}, t_1, \cdots, t_n]}{(Y'_{k+1}, \cdots, Y'_{n})}, \]
	where $Y'_r=\det\left(\sigma'_{1^{1+j-i}}\right)_{1\leq i, j\leq r}$. This suggests that we can use the column equivariant Schubert classes $\sigma'_{1^r}$ as generators to present $H_T^*(X, \mathbb{Z})$ as 
	\[\frac{\mathbb{Z}[\sigma'_1, \sigma'_{1^2}, \cdots, \sigma'_{1^k}, t_1, \cdots, t_n]}{(Y_{n-k+1}', \cdots, Y'_{n})}.\]
	\item\label{colvanishing} $\sigma'_\lambda=c_{\mathcal{S}_{\lambda^T}}^T(Q)$. If $\sigma'_\lambda\in H_T^*(X, \mathbb{Z})$ and $\ell(\lambda^T)>n-k$, then $\sigma'_\lambda=0$. 
	\item\label{colrecursive} We have $\widetilde{\sigma}_{1^r}=\sum_{i=0}^r e_i\sigma'_{1^{r-i}}$. 
	\item To compute $\sigma'_\lambda\cdot\sigma'_\mu$ in $H_T^*(X, \mathbb{Z})$, we can multiply the equivariant Schubert classes as if they were ordinary ones and use (\ref{colvanishing}), (\ref{colrecursive}) and equivariant Giambelli's formula to simplify the product to a linear combination of $\sigma'_\nu$ where $\nu$ is contained in the $k\times(n-k)$ rectangle.
\end{enumerate}
\end{remark}
\section{Quantum Schubert calculus and derivation of quantum multiplication rules}\label{qsch}
\subsection{Algebra presentation of quantum cohomology ring}
The (small) quantum cohomology ring $QH^*(X, \mathbb{Z})$ is the cohomology ring $H^*(X, \mathbb{Z})$ deformed by the parameter $q$ which records the `quantum' intersection number of Schubert varieties (aka Gromov-Witten invariants). More precisely, $QH^*(X, \mathbb{Z})$ is a $\mathbb{Z}[q]$-algebra isomorphic to $\mathbb{Z}[q]\otimes_\mathbb{Z}H^*(X, \mathbb{Z})$ as a module over $\mathbb{Z}[q]$, with $q$ of degree $n$ and $\sigma_\lambda$ of degree $|\lambda|$, and its ring structure is defined by the quantum product
\[\sigma_\lambda*\sigma_\mu=\sum_{\nu,\ d\geq 0}\langle\sigma_\lambda, \sigma_\mu, \sigma_{\nu^\vee}\rangle_d\sigma_\nu q^d.\]
Here $\nu^\vee$ is the Young diagram complement to $\nu$ in the $k\times(n-k)$ rectangle, and $\langle\sigma_\lambda, \sigma_\mu, \sigma_{\nu^\vee}\rangle_d$ is the Gromov-Witten invariant counting the number of degree $d$ rational maps $p$ from $\mathbb{P}^1$ to $X$ such that $p(\mathbb{P}^1)$ intersects transversally Schubert varieties $X_\lambda$, $X_\mu$ and $X_{\nu^\vee}$ at $p(0)$, $p(1)$ and $p(\infty)$ respectively. The quantum cohomology ring admits a compact presentation which is similar to that for equivariant cohomology in Proposition \ref{ringstr}(2).
\begin{theorem}[\cite{W, ST}]\label{ST}
	$\displaystyle QH^*(X, \mathbb{Z})\cong\frac{\mathbb{Z}[\sigma_1, \sigma_2, \cdots, \sigma_{n-k}, q]}{(Y_{k+1}, Y_{k+2}, \cdots, Y_{n-1}, Y_n+(-1)^{n-k}q)}$ (here $Y_r=\det(\sigma_{1+j-i})_{1\leq i, j\leq r}$, and we take the convention that $\sigma_r=0$ for $r>n-k$).
\end{theorem}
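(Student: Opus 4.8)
The plan is to prove Theorem~\ref{ST} along the lines of Siebert--Tian and Witten, concentrating the real content into a single enumerative step. Put $R:=\mathbb{Z}[\sigma_1,\dots,\sigma_{n-k},q]/I$ with $I=(Y_{k+1},\dots,Y_{n-1},Y_n+(-1)^{n-k}q)$, graded by $\deg\sigma_i=i$ and $\deg q=n$. The first step is purely algebraic: using the last relation to eliminate $q=(-1)^{n-k+1}Y_n$ identifies $R$ with $\mathbb{Z}[\sigma_1,\dots,\sigma_{n-k}]/(Y_{k+1},\dots,Y_{n-1})$, its $\mathbb{Z}[q]$-structure being carried by $Y_n$. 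Since $\mathbb{Z}[\sigma_1,\dots,\sigma_{n-k}]$ is Cohen--Macaulay of Krull dimension $n-k+1$ and the quotient by all of $Y_{k+1},\dots,Y_n$ is $H^*(X, \mathbb{Z})$, which is finite over $\mathbb{Z}$, the sequence $Y_{k+1},\dots,Y_n$ is a graded system of parameters, hence a regular sequence; in particular $Y_n$ (equivalently $q$) is a nonzerodivisor on $R$. Combined with the graded Nakayama lemma and the fact that the Giambelli determinants $\sigma_\lambda:=\det(\sigma_{\lambda_i+j-i})_{1\le i,j\le\ell(\lambda)}$, $\lambda\subseteq k\times(n-k)$, reduce mod $q$ to the Schubert $\mathbb{Z}$-basis of $H^*(X, \mathbb{Z})$, this shows $R$ is a free $\mathbb{Z}[q]$-module of rank $N:=\binom{n}{k}$ on these determinants.

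Next I would build the $\mathbb{Z}[q]$-algebra homomorphism $\phi\colon R\to QH^*(X, \mathbb{Z})$ sending the generator $\sigma_i$ to the $i$th special Schubert class and $q$ to $q$. For $\phi$ to be well defined, the defining relations must hold in $QH^*(X, \mathbb{Z})$: the Giambelli determinant $Y_r$ in the special classes, computed with the \emph{quantum} product, must vanish for $k+1\le r\le n-1$ and equal $(-1)^{n-k+1}q$ for $r=n$. This is the crux, and the one place geometry is unavoidable. My route would be to first establish the quantum Pieri rule for $\sigma_1$ --- that $\sigma_1*\sigma_\lambda$ is the classical box addition together with a $q$-correction occurring precisely when $\lambda_1=n-k$ --- by directly counting degree-one maps $\mathbb{P}^1\to X$ meeting the relevant Schubert cycles, and then to induct on total degree to see that the quantum corrections to the $Y_r$ are supported in degree $n$ and are pinned down there by the Pieri computation to the single term $(-1)^{n-k+1}q$ in $Y_n$. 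Alternatively one quotes Bertram's quantum Giambelli formula \cite{Ber} and Witten's relations \cite{W} and dispenses with the induction.

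To finish, observe that $\phi$ reduces modulo $q$ to the classical presentation isomorphism $R/qR=\mathbb{Z}[\sigma_1,\dots,\sigma_{n-k}]/(Y_{k+1},\dots,Y_n)\xrightarrow{\ \sim\ }H^*(X, \mathbb{Z})$ (a standard fact, cf.\ \cite[Proposition 23.2]{BT}). Hence $\operatorname{coker}\phi$ vanishes modulo $q$, so $\phi$ is surjective by graded Nakayama, $\operatorname{coker}\phi$ being a finitely generated graded $\mathbb{Z}[q]$-module. For injectivity, recall that $QH^*(X, \mathbb{Z})$ is a free $\mathbb{Z}[q]$-module of rank $N$ with the Schubert basis; tensoring $0\to\ker\phi\to R\to QH^*(X, \mathbb{Z})\to 0$ with $\mathbb{Z}[q]/(q)$ stays exact because $QH^*(X, \mathbb{Z})$ is flat over $\mathbb{Z}[q]$, so $(\ker\phi)/q(\ker\phi)$ injects into $R/qR$ and lands in the kernel of the classical isomorphism, hence is zero; as $\ker\phi$ is graded and $q$ has positive degree, $\ker\phi=0$. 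Therefore $\phi$ is an isomorphism.

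The hard part is the middle paragraph: verifying that the quantum deformation of the determinantal relations consists of exactly one copy of $(-1)^{n-k+1}q$ in $Y_n$ and nothing at all in $Y_{k+1},\dots,Y_{n-1}$. This is genuine Gromov--Witten input --- the line count behind quantum Pieri, or equivalently the quot-scheme analysis of \cite{Ber} --- and cannot be replaced by formal algebra; everything surrounding it uses only the foundational structure of $QH^*(X, \mathbb{Z})$ (associativity and the free $\mathbb{Z}[q]$-module structure with the Schubert basis, cf.\ \cite{RT,KM}).
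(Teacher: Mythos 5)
The paper does not actually prove Theorem \ref{ST}: it is quoted from \cite{W} and \cite{ST}, and the only argument offered is the remark following the statement, namely that Siebert--Tian's general theorem for Fano manifolds (the quantum relations are obtained by evaluating the classical relations in $QH^*(X,\mathbb{Z})$) reduces the presentation to the single quantum product $\sigma_{n-k}*\sigma_{1^k}=q$ computed by Witten. Your proposal reconstructs essentially this argument, but replaces the black-box structural theorem of \cite{ST} with explicit commutative algebra in this special case: the Cohen--Macaulay/regular-sequence argument showing $q$ acts as a nonzerodivisor and that $R$ is a free $\mathbb{Z}[q]$-module of rank $\binom{n}{k}$ on the Giambelli determinants, then graded Nakayama for surjectivity of $\phi$ and the flatness/Tor argument for injectivity. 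That part is sound, and what it buys is a self-contained proof not invoking the general Fano theorem; what the paper's route buys is brevity, since it cites the structural theorem and needs only Witten's one invariant. Both routes correctly locate the geometric content in the same place: the evaluation of the relations under the quantum product, on top of the foundational facts from \cite{RT,KM}.

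Two refinements to your middle paragraph. First, the vanishing of the quantum evaluations of $Y_{k+1},\dots,Y_{n-1}$ is not part of the geometric crux at all: since $\deg q=n>r$, a degree-$r$ element of $QH^*(X,\mathbb{Z})$ has no $q$-corrections when $r\le n-1$, so these evaluations coincide with the classical ones (which are $\sigma_{1^r}=0$ for $r>k$) for pure degree reasons; only the coefficient of $q$ in the evaluation of $Y_n$ carries enumerative content. Second, your primary route to that coefficient --- quantum Pieri for $\sigma_1$ plus an induction on degree --- is underjustified as stated: $\sigma_1$ does not generate the ring (classically or over $\mathbb{Z}[q]$), so knowing $\sigma_1*\sigma_\lambda$ for all $\lambda$ does not by itself determine the degree-$n$ determinant, which involves products of all the special classes. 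The clean reduction is the one the paper indicates: the Jacobi--Trudi recursion $Y_n=\sum_{j=1}^{n-k}(-1)^{j-1}\sigma_j Y_{n-j}$ together with the degree argument above gives that the quantum evaluation of $Y_n$ equals $(-1)^{n-k-1}\,\sigma_{n-k}*\sigma_{1^k}$, so the single Gromov--Witten input needed is Witten's relation $\sigma_{n-k}*\sigma_{1^k}=q$ (equivalently one may quote Bertram's quantum Giambelli formula \cite{Ber}, the fallback you yourself offer, which closes this gap). With that substitution your argument is complete and is a legitimate, more self-contained alternative to the paper's citation of \cite{ST} and \cite{W}.
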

Siebert and Tian showed in \cite{ST} that the quantum cohomology ring of a Fano manifold (of which a Grassmannian is an example) can be obtained in a canonical way from the algebra presentation of its ordinary cohomology ring: if $X_1, \cdots, X_N$ are generators and $f_1, \cdots, f_k$ relations of the ordinary cohomology ring, then the quantum cohomology ring can be described using the same set of generators, and the relations are got by evaluating $f_1, \cdots, f_k$  in the quantum cohomology ring. With this result, the derivation of the algebra presentation in Theorem \ref{ST} is reduced to the computation of the single quantum product
\[\sigma_{n-k}*\sigma_{1^k},\]
which was obtained earlier by Witten (cf. \cite{W}) and is associated to the evaluation of $Y_n$ in the quantum cohomology ring and attributed to the last equation in the relation ideal. 

We will recover some previously known quantum multiplication rules (quantum Pieri's formula \cite{Ber, BCFF}, and the rim hook algorithm \cite[Main Lemma]{BCFF}) in slightly different forms. The inputs of the proof are results of equivariant Schubert calculus in the previous section, Theorem \ref{ST} and quantum Giambelli's formula (cf. \cite{Ber}), which says that any general Schubert class $\sigma_\lambda\in QH^*(X, \mathbb{Z})$ for $\lambda$ contained in the $k\times(n-k)$ rectangle can be expressed in terms of special Schubert classes by the ordinary Giambelli's formula, without any quantum deformation. 
\begin{remark}
	In \cite{Ber}, Bertram obtained quantum Pieri's and Giambelli's formulae by analysing Grothendieck's quot schemes, without appealing to Theorem \ref{ST} and the theorem of associativity of quantum multiplication in general (cf. \cite{RT}), the latter of which is highly nontrivial and foundational in quantum cohomology theory. In fact, both the quantum cohomology ring structure and the associativity theorem for Grassmannians can be deduced from quantum Pieri and Giambelli (cf. \cite[Final remarks]{Ber}). We shall point out that our proof implicitly makes use of the associativity theorem, which is already assumed by Theorem \ref{ST}.
\end{remark}
\subsection{Linking equivariant and quantum cohomology}\label{linking}
In this section, we will work with instead the different but related equivariant cohomology ring $H_{U(n)}^*(X, \mathbb{Z})$, where the $U(n)$-action on $X$ is induced by its standard action on $\mathbb{C}^n$. As the $U(n)$-action restricts to the action by the maximal torus $T$, $H_{U(n)}^*(X, \mathbb{Z})$ can be realized as the injective image of the restriction map $H_{U(n)}^*(X, \mathbb{Z})\to H_T^*(X, \mathbb{Z})$, which more precisely is the subring of $H_T^*(X, \mathbb{Z})$ invariant under the Weyl group action. So 
\[H_{U(n)}^*(X, \mathbb{Z})\cong\frac{\mathbb{Z}[\widehat{\sigma}_1, \widehat{\sigma}_2, \cdots, \widehat{\sigma}_{n-k}, e_1, \cdots, e_n]}{(\widehat{Y}_{k+1}, \widehat{Y}_{k+2}, \cdots, \widehat{Y}_{n})}.\]
The following is the key lemma which links the equivariant and quantum cohomology of $X$.
\begin{lemma}\label{homomorphism}
	The map 
	\begin{align*}
		f: H^*_{U(n)}(X, \mathbb{Z})&\to QH^*(X, \mathbb{Z})
	\end{align*}
	which sends 
	\begin{enumerate}
		\item $e_1, \cdots, e_{n-1}$ to 0, 
		\item $e_n$ to $(-1)^k q$, and
		\item $\widehat{\sigma}_i$ to $\sigma_i$, $1\leq i\leq n-k$,
	\end{enumerate}
	defines a ring homomorphism. 
\end{lemma}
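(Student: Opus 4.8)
The two rings come with compatible-looking compact presentations: as recalled just above the statement, $H^*_{U(n)}(X,\mathbb{Z})\cong\mathbb{Z}[\widehat\sigma_1,\dots,\widehat\sigma_{n-k},e_1,\dots,e_n]/(\widehat Y_{k+1},\dots,\widehat Y_n)$, while $QH^*(X,\mathbb{Z})$ is presented by Theorem \ref{ST}. A ring homomorphism out of the first of these quotients is the same thing as a ring homomorphism out of the polynomial ring $\mathbb{Z}[\widehat\sigma_1,\dots,\widehat\sigma_{n-k},e_1,\dots,e_n]$ that kills the relation ideal. So the plan is: take the (automatically well-defined) ring homomorphism $\varphi\colon\mathbb{Z}[\widehat\sigma_1,\dots,\widehat\sigma_{n-k},e_1,\dots,e_n]\to QH^*(X,\mathbb{Z})$ determined by $\widehat\sigma_i\mapsto\sigma_i$, $e_i\mapsto 0$ for $i<n$, and $e_n\mapsto(-1)^kq$, and verify that $\varphi(\widehat Y_r)=0$ in $QH^*(X,\mathbb{Z})$ for each $k+1\le r\le n$; then $\varphi$ descends to the asserted homomorphism $f$.

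To carry this out I would first make the relations explicit in terms of the chosen generators. By Proposition \ref{colgiam} and Remark \ref{coincide}, $\widehat Y_r=\det(\widehat\sigma_{1+j-i})_{1\le i,j\le r}$ is the equivariant Chern class $c^T_r(S^*)$, and the entries $\widehat\sigma_m$ with $m>n-k$ occurring in it are, in this presentation, the polynomials obtained recursively from Proposition \ref{ringstr}(\ref{simplify}) (together with $\widetilde\sigma_m=0$ for $m>n-k$) as $\widehat\sigma_m=-\sum_{i\ge1}e_i\widehat\sigma_{m-i}$. Since $\varphi$ is a ring homomorphism, $\varphi(\widehat\sigma_m)$ obeys the same recursion with $e_i$ replaced by $\varphi(e_i)$, and because $\varphi(e_i)=0$ for $1\le i\le n-1$ this gives $\varphi(\widehat\sigma_m)=\sigma_m$ for $0\le m\le n-k$, $\varphi(\widehat\sigma_m)=0$ for $n-k<m<n$, and $\varphi(\widehat\sigma_n)=-(-1)^kq$ (with the usual conventions $\sigma_0=1$ and $\sigma_m=0$ for $m<0$). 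Now I would evaluate $\varphi(\widehat Y_r)=\det(\varphi(\widehat\sigma_{1+j-i}))_{1\le i,j\le r}$. For $k+1\le r\le n-1$ every index $1+j-i$ appearing is at most $r\le n-1$, so the matrix becomes $(\sigma_{1+j-i})_{1\le i,j\le r}$ with the standard convention $\sigma_m=0$ for $m>n-k$, i.e. $\varphi(\widehat Y_r)=Y_r$, which is $0$ in $QH^*(X,\mathbb{Z})$ by Theorem \ref{ST}. For $r=n$ the only entry not covered by that convention is the top-right one, which is now $-(-1)^kq$ instead of $\sigma_n=0$; expanding the determinant by linearity in that entry gives $\varphi(\widehat Y_n)=Y_n+\bigl(-(-1)^kq\bigr)(-1)^{n+1}M$, where $M$ is the minor of $(\sigma_{1+j-i})_{1\le i,j\le n}$ obtained by deleting the first row and last column. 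That minor is $\det(\sigma_{j-i})_{1\le i,j\le n-1}$, the determinant of a unitriangular matrix, hence $M=1$; using $(-1)\cdot(-1)^k\cdot(-1)^{n+1}=(-1)^{n+k}=(-1)^{n-k}$ we get $\varphi(\widehat Y_n)=Y_n+(-1)^{n-k}q$, which is precisely the last Siebert--Tian relation and hence also $0$ in $QH^*(X,\mathbb{Z})$. (More conceptually, in generating-function form Proposition \ref{ringstr}(\ref{simplify}) amounts to $\sum_{r\ge 0}(-1)^r\widehat Y_r t^r=(1+e_1t+\dots+e_nt^n)/(1+\widetilde\sigma_1 t+\dots+\widetilde\sigma_{n-k}t^{n-k})$, which under $\varphi$ becomes $\sum_{r\ge0}(-1)^r\varphi(\widehat Y_r)t^r=(1+(-1)^kqt^n)\sum_{r\ge0}(-1)^rY_r t^r$, i.e. $\varphi(\widehat Y_r)=Y_r+(-1)^{n-k}q\,Y_{r-n}$.) Thus $\varphi$ annihilates all the defining relations and descends to $f$.

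The computations involved are light; the one genuinely delicate point is the sign bookkeeping in the $r=n$ case — reconciling the $(-1)^k$ in $f(e_n)$ with the $(-1)^{n-k}$ appearing in Theorem \ref{ST} — which comes down to the minor above being exactly $1$ (the near-unitriangularity of the Giambelli matrix of $\sigma_{1^n}$) and to the parity identity $(-1)^{n+k}=(-1)^{n-k}$. One should also be a little careful at the outset to read each relation $\widehat Y_r$ through the recursion of Proposition \ref{ringstr}(\ref{simplify}), so that applying the ring homomorphism $\varphi$ is consistent with how the relations are expressed in the chosen generators.
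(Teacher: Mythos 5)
Your proposal is correct and follows essentially the same route as the paper: one checks that the images of the relations $\widehat{Y}_r$, $k+1\le r\le n$, land in the Siebert--Tian relation ideal, using Proposition \ref{ringstr}(\ref{simplify}) (with $\widetilde{\sigma}_m=0$ for $m>n-k$) to get $f(\widehat{\sigma}_m)=0$ for $n-k<m<n$ and $f(\widehat{\sigma}_n)=(-1)^{k+1}q$, and then the cofactor expansion of $\widehat{Y}_n$ in its top-right entry to produce $Y_n+(-1)^{n-k}q$. Your explicit handling of the lift to the polynomial ring and of the unitriangular minor only makes precise what the paper does more tersely.
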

\begin{proof}
Now that $f$ is defined by the images of the algebra generators of $H_{U(n)}^*(X, \mathbb{Z})$, it suffices to show that $f$ takes each of $\widehat{Y}_r$, $k+1\leq \ell\leq n$ to the relation ideal of $QH^*(X, \mathbb{Z})$. Note that for $n-k<r<n$, 
\begin{align*}
	f(\widehat{\sigma}_r)&=f\left(\widetilde{\sigma}_r-\sum_{i=1}^r e_i\widehat{\sigma}_{r-i}\right)\ (\text{by Proposition \ref{ringstr}(1)})\\
					&=0\ (\widetilde{\sigma}_r=0)\\
	f(\widehat{\sigma}_n)&=f\left(\widetilde{\sigma}_n-\sum_{i=1}^ne_i\widehat{\sigma}_{n-i}\right)\\
					&=f(-e_n)\\
					&=(-1)^{k+1}q.
\end{align*}
 Thus for $k+1\leq r<n$, 
 \begin{align*}
 	f(\widehat{Y}_r)&=Y_r,\ \text{and}\\
	f(\widehat{Y}_n)&=\left|\begin{matrix}\sigma_1&\sigma_2&\cdots&\sigma_{n-k}&0&\cdots&0&(-1)^{k+1}q\\ 1&\sigma_1&\cdots&\sigma_{n-k-1}&\sigma_{n-k}&\cdots&0&0\\ \vdots&\vdots&\ddots&\vdots&\vdots&\ddots&\vdots&\vdots\\ 0&0&\cdots&0&0&\cdots&1&\sigma_1\end{matrix}\right|\\
	&=\left|\begin{matrix}\sigma_1&\sigma_2&\cdots&\sigma_{n-k}&0&\cdots&0&0\\ 1&\sigma_1&\cdots&\sigma_{n-k-1}&\sigma_{n-k}&\cdots&0&0\\ \vdots&\vdots&\ddots&\vdots&\vdots&\ddots&\vdots&\vdots\\ 0&0&\cdots&0&0&\cdots&1&\sigma_1\end{matrix}\right|+\left|\begin{matrix}0&0&\cdots&0&0&\cdots&0&(-1)^{k+1}q\\ 1&\sigma_1&\cdots&\sigma_{n-k-1}&\sigma_{n-k}&\cdots&0&0\\ \vdots&\vdots&\ddots&\vdots&\vdots&\ddots&\vdots&\vdots\\ 0&0&\cdots&0&0&\cdots&1&\sigma_1\end{matrix}\right|\\
	&=Y_n+(-1) ^{n+1}\cdot(-1) ^{k+1}q\\
	&=Y_n+(-1) ^{n-k}q.
 \end{align*}
\end{proof}
\begin{remark}\label{rowrimhook}
	 In general, if $r=\ell n+p$, $0\leq p<n$, we can similarly get $f(\widehat{\sigma}_r)=(-1)^{\ell(k+1)}q^\ell\sigma_{p}$.
\end{remark}
	The ring homomorphism $f$ can be seen as a `partially forgetful map', forgetting part of the equivariant structure by ignoring the equivariant variables except the top universal Chern class $e_n$. Thus it is expected that $f$ sends a general equivariant Schubert class $\widehat{\sigma}_\lambda$ to a possibly nontrivial quantum deformation of the ordinary Schubert class $\sigma_\lambda$. It turns out that there is no quantum deformation due to quantum Giambelli's formula.
\begin{proposition}\label{corr}
	For $\lambda$ contained in the $k\times(n-k)$ rectangle, $f(\widehat{\sigma}_\lambda)=\sigma_\lambda$.
\end{proposition}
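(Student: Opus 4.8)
The plan is to evaluate $f$ directly on the Giambelli determinant from Definition \ref{equivGiambelli} defining $\widehat{\sigma}_\lambda$, observe that confinement of $\lambda$ to the $k\times(n-k)$ rectangle forces every entry index below $n$, so that no power of $q$ is ever produced, and then recognize the image as precisely the classical Giambelli determinant of $\sigma_\lambda$ computed inside $QH^*(X,\mathbb{Z})$, which equals $\sigma_\lambda$ by quantum Giambelli's formula (cf. \cite{Ber}).

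Concretely, I would start from $\widehat{\sigma}_\lambda=\det(\widehat{\sigma}_{\lambda_i+j-i})_{1\leq i,j\leq\ell(\lambda)}$ and apply $f$. Since $f$ is a ring homomorphism (Lemma \ref{homomorphism}) and $QH^*(X,\mathbb{Z})$ is commutative, $f$ commutes with the Leibniz expansion of the determinant, so $f(\widehat{\sigma}_\lambda)=\det(f(\widehat{\sigma}_{\lambda_i+j-i}))_{1\leq i,j\leq\ell(\lambda)}$, the determinant now being taken in $QH^*(X,\mathbb{Z})$. The one elementary point is the bound on entry indices: because $0\leq\lambda_i\leq\lambda_1\leq n-k$ and $1\leq i\leq\ell(\lambda)\leq k$, every index satisfies $1-k\leq\lambda_i+j-i\leq(n-k)+(k-1)=n-1$, hence $\lambda_i+j-i<n$ throughout. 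Now I would match the entries using the computation inside the proof of Lemma \ref{homomorphism} together with Proposition \ref{ringstr}(\ref{simplify}): $f(\widehat{\sigma}_r)=\sigma_r$ for $0\leq r\leq n-k$, while $f(\widehat{\sigma}_r)=0=\sigma_r$ for $n-k<r<n$ and for $r<0$ (with the standard conventions $\widehat{\sigma}_0=\sigma_0=1$, $\widehat{\sigma}_r=\sigma_r=0$ for $r<0$ or $r>n-k$). In particular the quantum-corrected values of Remark \ref{rowrimhook} never come into play, since those occur only for indices $\geq n$. Hence $f(\widehat{\sigma}_m)=\sigma_m$ for every index $m=\lambda_i+j-i$ appearing in the matrix, and therefore $f(\widehat{\sigma}_\lambda)=\det(\sigma_{\lambda_i+j-i})_{1\leq i,j\leq\ell(\lambda)}$ in $QH^*(X,\mathbb{Z})$.

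To finish, I would invoke quantum Giambelli's formula, which states exactly that for $\lambda\subseteq k\times(n-k)$ this determinant, evaluated with the quantum product, is $\sigma_\lambda$; this gives $f(\widehat{\sigma}_\lambda)=\sigma_\lambda$. I do not expect a genuine obstacle here: the whole content is the index inequality $\lambda_i+j-i\leq n-1$, which is what guarantees that the partially forgetful map $f$ never converts a generator into a quantum correction, after which the statement is a formal consequence of $f$ being a ring map and of quantum Giambelli's formula.
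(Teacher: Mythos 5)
Your proposal is correct and follows essentially the same route as the paper's proof: apply the ring homomorphism $f$ to the Giambelli determinant, identify the entries with $\sigma_{\lambda_i+j-i}$, and conclude via quantum Giambelli's formula. The only difference is that you spell out the index bound $\lambda_i+j-i\leq n-1$ (so that Remark \ref{rowrimhook} never produces a $q$), a point the paper leaves implicit but which is a correct and worthwhile clarification.
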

\begin{proof}
	Simply note that
	\begin{align*}
		f(\widehat{\sigma}_\lambda)&=f(\det(\widehat{\sigma}_{\lambda_i+j-i})_{1\leq i, j\leq\ell(\lambda)})\\
							&=\det(f(\widehat{\sigma}_{\lambda_i+j-i}))_{1\leq i, j\leq\ell(\lambda)}\\
							&=\det(\sigma_{\lambda_i+j-i})_{1\leq i, j\leq \ell(\lambda)}\\
							&=\sigma_\lambda\ (\text{by quantum Giambelli's formula}).
	\end{align*}
\end{proof}
\begin{definition}\label{genschu}
	For a general Young diagram $\lambda$ not necessarily contained in the $k\times(n-k)$ rectangle, define $\sigma_\lambda:=f(\widehat{\sigma}_\lambda)$, i.e., $\sigma_\lambda=\det(f(\widehat{\sigma}_{\lambda_i+j-i}))_{1\leq i, j\leq\ell(\lambda)}$.
\end{definition}
\begin{remark}\label{rowrimhookgen}
When $n-k<r<n$, then $\sigma_r=0$. More generally, by Remark \ref{rowrimhook} we have $\sigma_r=(-1)^{\ell(k+1)}q^\ell\sigma_p$. By Corollary \ref{vanishing}, ${\sigma}_\lambda=0$ if $\ell(\lambda)>k$. 
\end{remark}
\emph{\textbf{Caveat:}} It should be noted that the convention indicated in Remark \ref{rowrimhookgen} is different from the one used in Theorem \ref{ST}, i.e., $\sigma_r=0$ for $r>n-k$. The relation $Y_n+(-1)^{n-k}q$, when expressed using the convention in Remark \ref{rowrimhookgen}, becomes $\det(\sigma_{1+j-i})_{1\leq i, j\leq n}$. In the remainder of this paper we will adopt the convention in Remark \ref{rowrimhookgen}.

With Lemma \ref{homomorphism}, Proposition \ref{corr} and Remark \ref{rowrimhook}, one can compute quantum products through equivariant cohomology. 
\begin{corollary}
	\begin{enumerate}
		\item Let $\widehat{c}_{\lambda\mu}^\nu$ and $c_{\lambda\mu}^\nu$ be equivariant and quantum Littlewood-Richardson coefficients respectively. Then 
	\[f(\widehat{c}_{\lambda\mu}^\nu)=c_{\lambda\mu}^\nu.\]
		\item To compute $\sigma_\lambda*\sigma_\mu$, we can multiply the Schubert classes as if they were ordinary ones for $\text{Gr}(k, \infty)$, and then apply quantum Giambelli's formula and Remark \ref{rowrimhook} to simplify the product to a linear combination of $\sigma_\nu$ where $\nu$ is contained inside the $k\times(n-k)$ rectangle.
	\end{enumerate}
\end{corollary}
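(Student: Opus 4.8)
The plan is to transport the identities already established along the ring homomorphism $f \colon H_{U(n)}^*(X,\mathbb{Z}) \to QH^*(X,\mathbb{Z})$ of Lemma \ref{homomorphism}. For part (1), recall from Proposition \ref{ringstr}(2) (and its Weyl-invariant version quoted in Section \ref{linking}) that $\{\widehat{\sigma}_\nu : \nu \subseteq k\times(n-k)\}$ is a basis of $H_{U(n)}^*(X,\mathbb{Z})$ over $H_{U(n)}^*(\mathrm{pt},\mathbb{Z}) = \mathbb{Z}[e_1,\dots,e_n]$, so the equivariant Littlewood--Richardson coefficients are the unique $\widehat{c}_{\lambda\mu}^\nu \in \mathbb{Z}[e_1,\dots,e_n]$ with $\widehat{\sigma}_\lambda \cdot \widehat{\sigma}_\mu = \sum_{\nu \subseteq k\times(n-k)} \widehat{c}_{\lambda\mu}^\nu\, \widehat{\sigma}_\nu$, and likewise $\{\sigma_\nu : \nu \subseteq k\times(n-k)\}$ is a $\mathbb{Z}[q]$-basis of $QH^*(X,\mathbb{Z})$. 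I would then apply $f$ to this equation: since $f$ is a ring homomorphism and $f(\widehat{\sigma}_\rho) = \sigma_\rho$ for $\rho$ in the rectangle (Proposition \ref{corr}), the left side becomes $\sigma_\lambda * \sigma_\mu$ and the right side becomes $\sum_\nu f(\widehat{c}_{\lambda\mu}^\nu)\, \sigma_\nu$. Because $f(e_i) = 0$ for $i < n$ and $f(e_n) = (-1)^k q$, each $f(\widehat{c}_{\lambda\mu}^\nu)$ lies in $\mathbb{Z}[q]$; comparing with $\sigma_\lambda * \sigma_\mu = \sum_\nu c_{\lambda\mu}^\nu\, \sigma_\nu$ and invoking freeness of the $\mathbb{Z}[q]$-basis $\{\sigma_\nu\}$ yields $c_{\lambda\mu}^\nu = f(\widehat{c}_{\lambda\mu}^\nu)$.

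For part (2), I would feed part (1) into the explicit recipe for equivariant products recorded just after Proposition \ref{ringstr}: one first computes $\widehat{\sigma}_\lambda \cdot \widehat{\sigma}_\mu$ by the classical Littlewood--Richardson, Pieri and Giambelli rules as if the $\widehat{\sigma}$'s were Schubert classes of $\mathrm{Gr}(k,\infty)$, dropping summands indexed by diagrams with more than $k$ rows by Corollary \ref{vanishing}, and then rewrites each $\widehat{\sigma}_\nu$ with $\nu_1 > n-k$ in terms of in-rectangle classes via Proposition \ref{ringstr}(\ref{simplify}). Applying $f$ term by term, the classical (integer) structure constants are untouched, while the reduction step turns into the substitution $f(\widehat{\sigma}_r) = (-1)^{\ell(k+1)} q^\ell \sigma_p$ for $r = \ell n + p$ of Remark \ref{rowrimhook}; equivalently, one expands each out-of-rectangle $\sigma_\nu$ by quantum Giambelli (i.e. Definition \ref{genschu}) and reduces its entries by Remark \ref{rowrimhook}. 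By part (1) the resulting class is $\sigma_\lambda * \sigma_\mu$, which is the claim.

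The argument is essentially formal, the substantive content already residing in Lemma \ref{homomorphism}, Proposition \ref{corr} (whose proof uses quantum Giambelli's formula) and Theorem \ref{ST}. \emph{The only delicate points} are checking that $f$ restricts to a ring map $\mathbb{Z}[e_1,\dots,e_n] \to \mathbb{Z}[q]$ on coefficients and sends the $\widehat{\sigma}_\nu$-basis to the $\sigma_\nu$-basis, so that matching structure constants is legitimate, and the justification --- implicit in the discussion before Example \ref{equivexam} --- that a classical $\mathrm{Gr}(k,\infty)$ multiplication genuinely computes $\widehat{\sigma}_\lambda \cdot \widehat{\sigma}_\mu$, which rests on the special classes $\widehat{\sigma}_r$ obeying the same Giambelli relations as Schur functions (Definition \ref{equivGiambelli}).
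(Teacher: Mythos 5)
Your argument is correct and is exactly the route the paper intends: the corollary is stated there without proof as an immediate consequence of Lemma \ref{homomorphism}, Proposition \ref{corr} and Remark \ref{rowrimhook}, namely applying the ring homomorphism $f$ to the equivariant expansion $\widehat{\sigma}_\lambda\cdot\widehat{\sigma}_\mu=\sum_\nu\widehat{c}_{\lambda\mu}^\nu\widehat{\sigma}_\nu$ and comparing coefficients in the $\mathbb{Z}[q]$-basis $\{\sigma_\nu\}$, then combining this with the equivariant computation recipe following Proposition \ref{ringstr}. Your added care about $f$ restricting to $\mathbb{Z}[e_1,\dots,e_n]\to\mathbb{Z}[q]$ and about why the $\mathrm{Gr}(k,\infty)$ Littlewood--Richardson step is legitimate (the $\widehat{\sigma}_r$ satisfy the Jacobi--Trudi relations) only makes explicit what the paper leaves implicit.
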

\begin{example}
	From Example \ref{equivexam}, we have, for $X=\text{Gr}(2, 5)$, the quantum product $\sigma_{2, 1}*\sigma_2=\sigma_{3, 2}+q$ after applying $f$ to the equivariant product. Equivalently, we can multiply $\sigma_{2, 1}$ and $\sigma_2$ as if they were Schubert classes of $\text{Gr}(2, \infty)$. Then we get $\sigma_{2, 1}*\sigma_2=\sigma_{4, 1}+\sigma_{3, 2}$. We apply quantum Giambelli's formula to rewrite the first term:
	\begin{align*}
		\sigma_{4, 1}&=\left|\begin{matrix}\sigma_4&\sigma_5\\ 1&\sigma_1\end{matrix}\right|\\
		&=\left|\begin{matrix}0&(-1)^{2+1}q\\1&\sigma_1 \end{matrix}\right|\\
		&=q.
	\end{align*}
	Now we arrive at the same result. 
\end{example}
The quantum contribution in $\sigma_\lambda*\sigma_\mu$ all comes from those Schubert classes corresponding to Young diagrams $\nu$ with $\ell(\nu)\leq k$ and $\nu_1>n-k$. This observation leads to 
\begin{proposition}\label{colnodef}
	If the sum of numbers of columns of $\lambda$ and $\mu$ does not exceed $n-k$, then there is no quantum deformation in the product $\sigma_\lambda*\sigma_\mu$. 
\end{proposition}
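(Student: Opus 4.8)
The plan is to run the computational recipe for the quantum product recorded just above the statement. To evaluate $\sigma_\lambda*\sigma_\mu$ one first multiplies $\sigma_\lambda$ and $\sigma_\mu$ by the classical Littlewood--Richardson rule, as if they were Schubert classes of $\mathrm{Gr}(k,\infty)$, obtaining $\sum_\nu c_{\lambda\mu}^\nu\,\sigma_\nu$ with the sum over all Young diagrams $\nu$ (a priori not inside the $k\times(n-k)$ rectangle, and possibly with more than $k$ rows), and then rewrites each $\sigma_\nu$ by quantum Giambelli's formula together with Remark \ref{rowrimhook}. A power of $q$ can enter only at this second stage, and only through a class $\sigma_\nu$ whose Giambelli determinant $\det\!\big(f(\widehat{\sigma}_{\nu_i+j-i})\big)_{1\le i,j\le\ell(\nu)}$ has an entry with subscript $\ge n$; since the largest subscript occurring is $\nu_1+\ell(\nu)-1$, no $q$ can appear once we know that every $\nu$ with $c_{\lambda\mu}^\nu\neq 0$ and $\ell(\nu)\le k$ already satisfies $\nu_1\le n-k$. (The terms with $\ell(\nu)>k$ contribute nothing, as $\sigma_\nu=0$ there by Remark \ref{rowrimhookgen}.)

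So the crux is the combinatorial bound: $c_{\lambda\mu}^\nu\neq 0$ implies $\nu_1\le\lambda_1+\mu_1$, i.e.\ the number of columns behaves subadditively under Littlewood--Richardson multiplication. I would obtain it by conjugating: $c_{\lambda\mu}^\nu=c_{\lambda^T\mu^T}^{\nu^T}$, and a nonzero $c_{\lambda^T\mu^T}^{\nu^T}$ is witnessed by a semistandard skew tableau of shape $\nu^T/\lambda^T$ with content $\mu^T$; its leftmost column has $\ell(\nu^T)-\ell(\lambda^T)=\nu_1-\lambda_1$ boxes in the skew part, each filled with a strictly increasing entry drawn from $\{1,\dots,\ell(\mu^T)\}=\{1,\dots,\mu_1\}$, forcing $\nu_1-\lambda_1\le\mu_1$. (Alternatively, this is a well-known property of Littlewood--Richardson coefficients and can simply be cited.)

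With this in hand the hypothesis $\lambda_1+\mu_1\le n-k$ gives $\nu_1\le n-k$ for every $\nu$ contributing to the classical product, so after dropping the identically vanishing terms with $\ell(\nu)>k$ only diagrams $\nu\subseteq k\times(n-k)$ survive. For such $\nu$, Proposition \ref{corr} says $\sigma_\nu=f(\widehat{\sigma}_\nu)$ is the standard basis element of $QH^*(X,\mathbb{Z})$, which involves no power of $q$. Hence $\sigma_\lambda*\sigma_\mu=\sum_{\nu\subseteq k\times(n-k)}c_{\lambda\mu}^\nu\,\sigma_\nu$ equals the classical cup product $\sigma_\lambda\cdot\sigma_\mu$, and there is no quantum deformation. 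The only genuinely nontrivial ingredient is the subadditivity bound of the middle paragraph; the rest is bookkeeping with the constructions of Section \ref{qsch}.
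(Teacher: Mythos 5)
Your proposal is correct and follows essentially the same route as the paper: the paper's proof is exactly the observation (stated just before the proposition) that quantum contributions can only come from classically produced $\sigma_\nu$ with $\ell(\nu)\le k$ and $\nu_1>n-k$, combined with the Littlewood--Richardson subadditivity bound $\nu_1\le\lambda_1+\mu_1\le n-k$. You merely supply the details the paper leaves implicit (the conjugation argument for the column bound and the subscript count $\nu_1+\ell(\nu)-1<n$ in the Giambelli determinant), and these are correct.
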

\begin{proof}
	By Littlewood-Richardson rule, the number of columns of the Young diagram of any Schubert class appearing in the classical product $\sigma_\lambda\cdot\sigma_\mu\in H^*(\text{Gr}(k, \infty), \mathbb{Z})$ does not exceed the sum of numbers of columns of $\lambda$ and $\mu$, which is assumed to not exceed $n-k$. Thus there is no quantum deformation in the product $\sigma_\lambda*\sigma_\mu$. 
\end{proof}
\subsection{Duality}
\begin{proposition}\label{dualforget}
	The map $f: H_T^*(X, \mathbb{Z})\to QH^*(X, \mathbb{Z})$ satisfies $f(\sigma'_\lambda)=\sigma_\lambda$ for $\lambda$ contained in the $k\times(n-k)$ rectangle. 
\end{proposition}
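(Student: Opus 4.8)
The plan is to deduce this from Proposition \ref{corr} (which already gives $f(\widehat{\sigma}_\lambda)=\sigma_\lambda$) by showing that, when $\lambda$ lies inside the $k\times(n-k)$ rectangle, $f$ sends $\sigma'_\lambda$ and $\widehat{\sigma}_\lambda$ to the same element. Both of these classes are assembled from the column classes by the same Giambelli-type determinant --- Definition \ref{dualchar} for $\sigma'_\lambda$, Remark \ref{coincide} for $\widehat{\sigma}_\lambda$ --- so it suffices to compare $f$ on the column classes $\sigma'_{1^r}$ and $\widehat{\sigma}_{1^r}$ for the range of indices that actually occurs; and that comparison is easy, because the only place where $f$ introduces the quantum variable is $f(e_n)=(-1)^kq$. (All the classes here are polynomials in the $c^T_r(Q)$, hence Weyl-invariant, so $f$ applies to them.)

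Step one: compare $f(\sigma'_{1^r})$ with $f(\widehat{\sigma}_{1^r})$. By Remark \ref{coincide}, $\widehat{\sigma}_{1^r}=\widetilde{\sigma}_{1^r}$, and by the dual recursion Remark \ref{dualprop}(\ref{colrecursive}), $\widetilde{\sigma}_{1^r}=\sum_{i=0}^r e_i\,\sigma'_{1^{r-i}}$; hence $\widehat{\sigma}_{1^r}=\sum_{i=0}^r e_i\,\sigma'_{1^{r-i}}$. Applying $f$ and using $f(e_0)=1$, $f(e_i)=0$ for $1\le i\le n-1$, every term with $i\ge 1$ dies provided the summation range does not reach $i=n$, i.e.\ provided $r\le n-1$, so that
\[f(\sigma'_{1^r})=f(\widehat{\sigma}_{1^r}),\qquad 0\le r\le n-1\]
(for $r=0$ both sides are $1$; for $r<0$ both vanish by the conventions of Definitions \ref{dualchar} and \ref{equivGiambelli}). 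Step two: for $\lambda\subseteq k\times(n-k)$ one has $\ell(\lambda^T)=\lambda_1\le n-k$ and $\lambda_1^T=\ell(\lambda)\le k$, so every index occurring in $\sigma'_\lambda=\det(\sigma'_{1^{\lambda_i^T+j-i}})_{1\le i,j\le\ell(\lambda^T)}$ is at most $\lambda_1^T+\ell(\lambda^T)-1\le k+(n-k)-1=n-1$ (indices below $0$ contribute $0$ to both relevant determinants). Since $f$ is a ring homomorphism, Step one gives
\begin{align*}
	f(\sigma'_\lambda)&=\det\!\big(f(\sigma'_{1^{\lambda_i^T+j-i}})\big)=\det\!\big(f(\widehat{\sigma}_{1^{\lambda_i^T+j-i}})\big)\\
	&=f\!\big(\det(\widehat{\sigma}_{1^{\lambda_i^T+j-i}})\big)=f(\widehat{\sigma}_\lambda),
\end{align*}
using $\widehat{\sigma}_\lambda=\det(\widehat{\sigma}_{1^{\lambda_i^T+j-i}})$ from Remark \ref{coincide} at the third equality, and $f(\widehat{\sigma}_\lambda)=\sigma_\lambda$ from Proposition \ref{corr} at the end.

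The only genuinely delicate point is the index bound in Step one: the equality $f(\sigma'_{1^r})=f(\widehat{\sigma}_{1^r})$ does break at $r=n$, precisely because $e_n$ then enters the recursion and $f(e_n)=(-1)^kq\ne 0$, and the role of the hypothesis $\lambda\subseteq k\times(n-k)$ is exactly to keep every index in the defining determinant of $\sigma'_\lambda$ below $n$. Apart from that, everything is formal: one only has to check that the out-of-range conventions for the column classes agree on the two sides, and that $f$ commutes with the determinantal expressions --- which it does, being a ring homomorphism.
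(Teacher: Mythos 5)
Your proof is correct and follows essentially the same route as the paper's: both reduce to the column classes via the defining determinants, use that $f$ kills $e_1,\dots,e_{n-1}$ together with the bound $\lambda_i^T+j-i\le n-1$ forced by $\lambda\subseteq k\times(n-k)$, and conclude through Giambelli-type determinants and the fact that $f$ is a ring homomorphism. The only cosmetic difference is that you compare $f(\sigma'_{1^r})$ with $f(\widehat{\sigma}_{1^r})$ via Remark \ref{dualprop}(\ref{colrecursive}) and then quote Proposition \ref{corr}, whereas the paper computes $f(\sigma'_{1^r})=\sigma_{1^r}$ directly from $f(\widetilde{\sigma}_m)=\sigma_m$ and appeals to the equivalence of the two Jacobi--Trudi determinants at the last step.
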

\begin{proof}
	By Proposition \ref{ringstr}(1) and the definition of $f$, we have, for $r\leq n-1$, 
	\[f(\widetilde{\sigma}_r)=f(\widehat{\sigma}_r)=\sigma_r.\]
	Then under the same condition for $r$, 
	\begin{align*}
		f(\sigma'_{1^r})&=\det(f(\widetilde{\sigma}_{1+j-i}))_{1\leq i, j\leq r}\\
					&=\det(\sigma_{1+j-i})_{1\leq i, j\leq r}\\
					&=\sigma_{1^r}.
	\end{align*}
	If $\lambda$ is contained in the $k\times(n-k)$ rectangle, $\lambda_i^T+j-i\leq n-1$ for $1\leq i, j\leq\ell(\lambda^T)$. It follows that 
	\begin{align*}
		f(\sigma'_\lambda)&=\det(f(\sigma'_{1^{\lambda_i^T+j-i}}))_{1\leq i, j\leq \ell(\lambda^T)}\\
						&=\det(\sigma_{1^{\lambda_i^T+j-i}})_{1\leq i, j\leq \ell(\lambda^T)}, 
	\end{align*}
	which is the same as the other Giambelli's formula $\det(\sigma_{\lambda_i+j-i})_{1\leq i, j\leq \ell(\lambda)}$ for $\sigma_\lambda$ (cf. \cite[Appendix A, Equations (A.5) and (A.6)]{FH}).
\end{proof}
\begin{remark}
	If the condition that $\lambda$ is contained in the $k\times(n-k)$ rectangle is removed, then $f(\sigma'_\lambda)$ may not be $\sigma_\lambda$. For example, $\sigma_{1^n}=0$ while $f(\sigma'_{1^n})=(-1)^{n-k-1}q$. 
\end{remark}
There is a duality isomorphism between the quantum cohomology ring $QH^*(X, \mathbb{Z})$ and $QH^*(X', \mathbb{Z})$ (cf. \cite[Proposition 4.1]{BCFF}), similar to the equivariant version (Proposition \ref{dual}). We give below an alternative proof of the duality isomorphism by means of the equivariant duality. 
\begin{proposition}\textnormal(\cite[Proposition 4.1]{BCFF}\textnormal).\label{qduality} The group isomorphism 
\[\text{qinv}: QH^*(X, \mathbb{Z})\to QH^*(X', \mathbb{Z})\]
determined by $\text{qinv}(\sigma_\lambda)=\sigma_{\lambda^T}$ for $\lambda$ contained in the $k\times(n-k)$ rectangle and $\text{qinv}(q)=q$ is a ring isomorphism.
\end{proposition}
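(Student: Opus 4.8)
The plan is to build $\text{qinv}$ by transporting the equivariant duality of Proposition~\ref{dual} across the forgetful homomorphism of Lemma~\ref{homomorphism}. First I would restrict $\text{inv}^{*}$ to $U(n)$-equivariant cohomology, obtaining a ring isomorphism $\text{inv}^{*}\colon H_{U(n)}^{*}(X,\mathbb{Z})\to H_{U(n)}^{*}(X',\mathbb{Z})$ which fixes every $e_{i}$ (as $\text{inv}$ is the restriction of a $U(n)$-equivariant diffeomorphism for the standard actions, so the induced map is $H_{U(n)}^{*}(\mathrm{pt},\mathbb{Z})$-linear) and which sends $\widehat{\sigma}_{\lambda}$ to the dual class $\sigma'_{\lambda^{T}}$ of $X'$, up to a sign $(-1)^{|\lambda|}$ arising from $\text{inv}^{*}S\cong Q'$, hence $\text{inv}^{*}S^{*}\cong Q'^{*}$. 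Alongside $f\colon H_{U(n)}^{*}(X,\mathbb{Z})\to QH^{*}(X,\mathbb{Z})$ I would record the analogous homomorphism $f'\colon H_{U(n)}^{*}(X',\mathbb{Z})\to QH^{*}(X',\mathbb{Z})$ obtained by applying Lemma~\ref{homomorphism} to $\text{Gr}(n-k,n)$; thus $f'(e_{i})=0$ for $i<n$, $f'(e_{n})=(-1)^{n-k}q$, and $f'(\widehat{\sigma}_{\mu})=\sigma_{\mu}$ for $\mu\subseteq(n-k)\times k$.

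The structural point is that $\ker f$ equals the ideal $(e_{1},\dots,e_{n-1})$, and likewise $\ker f'$. Indeed, by equivariant formality $H_{U(n)}^{*}(X,\mathbb{Z})$ is free over $\mathbb{Z}[e_{1},\dots,e_{n}]$ of rank $\binom{n}{k}$, so its quotient by $(e_{1},\dots,e_{n-1})$ is free over $\mathbb{Z}[e_{n}]$ of rank $\binom{n}{k}$; the map induced by $f$ becomes, under the ring isomorphism $\mathbb{Z}[e_{n}]\to\mathbb{Z}[q]$, $e_{n}\mapsto(-1)^{k}q$, a surjection of free $\mathbb{Z}[q]$-modules of rank $\binom{n}{k}$ onto $QH^{*}(X,\mathbb{Z})$, and a surjective endomorphism of a finitely generated module over a commutative ring is injective, so this surjection is an isomorphism and $\ker f=(e_{1},\dots,e_{n-1})$. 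Since $\text{inv}^{*}$ fixes the $e_{i}$, it carries $\ker f$ onto $\ker f'$; as $f$ is surjective, $f'\circ\text{inv}^{*}$ descends to a ring homomorphism $\overline{\text{inv}}\colon QH^{*}(X,\mathbb{Z})\to QH^{*}(X',\mathbb{Z})$ with $\overline{\text{inv}}\circ f=f'\circ\text{inv}^{*}$, and running the same construction with the inverse diffeomorphism $[W]\mapsto[W^{\perp}]$ yields a two-sided inverse, so $\overline{\text{inv}}$ is a ring isomorphism.

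It remains to compare $\overline{\text{inv}}$ with $\text{qinv}$ on the $\mathbb{Z}$-algebra generators. From $\sigma_{\lambda}=f(\widehat{\sigma}_{\lambda})$ (Proposition~\ref{corr}), Proposition~\ref{dual}, and the $X'$-analogue of Proposition~\ref{dualforget} (applicable since $\lambda^{T}\subseteq(n-k)\times k$), one computes $\overline{\text{inv}}(\sigma_{\lambda})=(-1)^{|\lambda|}\sigma_{\lambda^{T}}$; and from $q=(-1)^{k}f(e_{n})$ together with $\text{inv}^{*}(e_{n})=e_{n}$ and $f'(e_{n})=(-1)^{n-k}q$, one gets $\overline{\text{inv}}(q)=(-1)^{k}f'(e_{n})=(-1)^{n}q$. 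I would then compose $\overline{\text{inv}}$ with the grading-sign automorphism $\phi'$ of $QH^{*}(X',\mathbb{Z})$ defined by $\phi'(x)=(-1)^{\deg x}x$ on homogeneous elements — a ring automorphism because $QH^{*}(X',\mathbb{Z})$ is graded with $q$ of degree $n$ and $\sigma_{\mu}$ of degree $|\mu|$. Since $|\lambda|=|\lambda^{T}|$, the composite $\phi'\circ\overline{\text{inv}}$ sends $\sigma_{\lambda}$ to $\sigma_{\lambda^{T}}$ and $q$ to $q$, so it coincides with $\text{qinv}$, which is thus a composition of ring isomorphisms.

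The step I expect to be the main obstacle is exactly this sign bookkeeping: determining the sign in $\text{inv}^{*}\widehat{\sigma}_{\lambda}$ (from the behaviour of equivariant Chern classes under the anti-holomorphic identification $\text{inv}$) and recognizing that the naive expectation $\overline{\text{inv}}=\text{qinv}$ already fails for odd $n$, which is what forces the twist by $\phi'$. The remaining ingredients — equivariant formality, the presentations of Proposition~\ref{ringstr}, and the rank count for $\ker f$ — are routine.
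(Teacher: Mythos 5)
Your argument is correct and reaches the statement, but it is organized quite differently from the paper's proof, which is a short transport of structure constants: apply $\text{inv}^*$ (Proposition \ref{dual}, taken sign-free) to $\widehat{\sigma}_\lambda\cdot\widehat{\sigma}_\mu=\sum_\nu\widehat{c}_{\lambda\mu}^\nu\widehat{\sigma}_\nu$, then apply the forgetful map to both identities and quote Propositions \ref{corr} and \ref{dualforget}, so that $\text{qinv}$ visibly interchanges the two resulting quantum identities. You instead identify $\ker f=(e_1,\dots,e_{n-1})$ (your rank count is fine: both sides are free of rank $\binom{n}{k}$, over $\mathbb{Z}[e_n]$ and $\mathbb{Z}[q]$ respectively), descend $f'\circ\text{inv}^*$ to a ring isomorphism of quantum rings, compute it on generators, and twist by the grading-sign automorphism. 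What your route buys is precisely the sign bookkeeping that the paper's proof glosses over: your identification $\text{inv}^*S\cong Q'$ (orthogonal projection is complex-linear and $T$-equivariant) is the defensible one, whereas the identification behind ``$\text{inv}^*S^*=Q'$'' in the proof of Proposition \ref{dual} is only conjugate-linear, so $\text{inv}^*\widehat{\sigma}_\lambda=(-1)^{|\lambda|}\sigma'_{\lambda^T}$ as you claim; and you correctly distinguish $f'$ (with $f'(e_n)=(-1)^{n-k}q$) from $f$ (with $f(e_n)=(-1)^kq$), a distinction hidden in the paper's ``applying $f$ to these two equations'' and relevant when $n$ is odd. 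These two sign effects cancel — your $\phi'$-twist is one clean way to see it, and the effect on coefficients is that $(-1)^{\deg}$ followed by $f'$ agrees with $f$ on $H^*_{U(n)}(\text{pt},\mathbb{Z})$ — which is why the sign-free statement of Proposition \ref{qduality} survives; a quick consistency check is $QH^*(\mathbb{P}^2)\cong\mathbb{Z}[\sigma_1,q]/(\sigma_1^3-q)$, which admits no automorphism fixing the Schubert classes and negating $q$, so some twist is unavoidable for odd $n$. One small slip in your closing remark: with your signs the naive $\overline{\text{inv}}$ already disagrees with $\text{qinv}$ on $\sigma_1$ for every $n\geq 2$, not only for odd $n$; the composition with $\phi'$ repairs it in all cases, so nothing in the proof is affected.
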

\begin{proof}
	Applying $\text{inv}^*$ to $\displaystyle\widehat{\sigma}_\lambda\cdot\widehat{\sigma}_\mu=\sum_{\nu\subseteq k\times(n-k)}\widehat{c}_{\lambda\mu}^\nu\widehat{\sigma}_\nu$ gives $\displaystyle \sigma'_{\lambda^T}\cdot\sigma'_{\mu^T}=\sum_{\nu\subseteq k\times(n-k)}\widehat{c}_{\lambda\mu}^\nu\sigma'_{\nu^T}$ by Proposition \ref{dual}. Applying $f$ to these two equations yields 
	\begin{eqnarray}
		\sigma_\lambda*\sigma_\mu=\sum_{\nu\subseteq k\times(n-k)} c_{\lambda\mu}^\nu\sigma_\nu, \label{2}\\
		\sigma_{\lambda^T}*\sigma_{\mu^T}=\sum_{\nu\subseteq k\times(n-k)}c_{\lambda\mu}^\nu\sigma_{\nu^T}.\label{3}
	\end{eqnarray}
	by Proposition \ref{dualforget}. The map qinv takes Equation (\ref{2}) to (\ref{3}) and is indeed a ring isomorphism.
\end{proof}
Proposition \ref{qduality} enables us to dualize some previous results. For example, dualizing Proposition \ref{colnodef} recovers the following
\begin{proposition}\textnormal(\cite[Lemma 3]{Bu}\textnormal). If the sum $\ell(\lambda)+\ell(\mu)$ does not exceed $k$, then there is no quantum deformation in the product $\sigma_\lambda*\sigma_\mu$. \end{proposition}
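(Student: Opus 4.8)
The plan is to deduce this from Proposition~\ref{colnodef} via the quantum duality isomorphism $\text{qinv}$ of Proposition~\ref{qduality}, exactly as the preceding sentence of the text suggests. The key numerical observation is that conjugation of Young diagrams interchanges rows and columns: the number of columns of $\lambda^T$ equals $\ell(\lambda)$, and the number of columns of $\mu^T$ equals $\ell(\mu)$. Thus the hypothesis $\ell(\lambda)+\ell(\mu)\le k$ says precisely that the total number of columns of $\lambda^T$ and $\mu^T$ is at most $k$.

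First I would pass to $X'=\text{Gr}(n-k,n)$. Since $\lambda,\mu\subseteq k\times(n-k)$, their conjugates satisfy $\lambda^T,\mu^T\subseteq(n-k)\times k$, which is the ambient rectangle for $X'$. Applying Proposition~\ref{colnodef} with $X'$ in place of $X$ — so that the roles of $k$ and $n-k$ are swapped and the bound ``$n-k$'' there becomes $n-(n-k)=k$ — together with the observation above, we conclude that $\sigma_{\lambda^T}*\sigma_{\mu^T}$ in $QH^*(X',\mathbb{Z})$ carries no quantum deformation; that is, $\sigma_{\lambda^T}*\sigma_{\mu^T}=\sum_{\nu\subseteq(n-k)\times k}c^{\nu}_{\lambda^T\mu^T}\sigma_{\nu}$ with no power of $q$ appearing.

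Next I would apply the ring isomorphism $\text{qinv}$ to the quantum product identity $\sigma_\lambda*\sigma_\mu=\sum_{\nu,\,d\ge0}\langle\sigma_\lambda,\sigma_\mu,\sigma_{\nu^\vee}\rangle_d\,\sigma_\nu\,q^d$. Since $\text{qinv}$ is a $\mathbb{Z}[q]$-algebra homomorphism with $\text{qinv}(q)=q$ and $\text{qinv}(\sigma_\nu)=\sigma_{\nu^T}$, the left side maps to $\sigma_{\lambda^T}*\sigma_{\mu^T}$ while the right side maps to $\sum_{\nu,\,d\ge0}\langle\sigma_\lambda,\sigma_\mu,\sigma_{\nu^\vee}\rangle_d\,\sigma_{\nu^T}\,q^d$. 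Comparing with the $q$-free expression for $\sigma_{\lambda^T}*\sigma_{\mu^T}$ from the previous step, and using that $\{\sigma_{\nu^T}:\nu\subseteq k\times(n-k)\}$ is a $\mathbb{Z}[q]$-basis of $QH^*(X',\mathbb{Z})$, forces all terms with $d>0$ to vanish. Hence $\sigma_\lambda*\sigma_\mu$ has no quantum deformation in $QH^*(X,\mathbb{Z})$.

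This argument is pure bookkeeping, and I anticipate no substantive obstacle; the only points requiring care are (i) correctly interchanging the roles of $k$ and $n-k$, and of rows and columns, when invoking Proposition~\ref{colnodef} on $X'$, and (ii) noting that ``no quantum deformation'' is manifestly preserved by $\text{qinv}$, since it fixes $q$ and permutes the Schubert basis. (Alternatively, one could argue directly in the spirit of the proof of Proposition~\ref{colnodef}, bounding $\ell(\nu)$ rather than $\nu_1$ for the diagrams $\nu$ occurring in the classical product $\sigma_\lambda\cdot\sigma_\mu$ in $H^*(\text{Gr}(k,\infty),\mathbb{Z})$ via the Littlewood--Richardson rule, but the duality route is shorter.)
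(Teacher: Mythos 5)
Your main argument is correct and is essentially the paper's own proof: the paper obtains this statement precisely by dualizing Proposition~\ref{colnodef} through the ring isomorphism $\text{qinv}$ of Proposition~\ref{qduality}, exactly as you do (applying Proposition~\ref{colnodef} on $\text{Gr}(n-k,n)$, where conjugation turns the row bound $\ell(\lambda)+\ell(\mu)\leq k$ into the column bound, and transporting the $q$-free expansion back along the $\mathbb{Z}[q]$-basis). Only your parenthetical ``direct'' alternative is doubtful: in this framework bounding $\ell(\nu)$ does not by itself exclude quantum terms, since the quantum contributions come exactly from diagrams with $\ell(\nu)\leq k$ but $\nu_1>n-k$, which can occur even when $\ell(\lambda)+\ell(\mu)\leq k$, so that route would require an additional cancellation argument rather than a mere Littlewood--Richardson degree count.
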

\subsection{Quantum multiplication rules revisited}

\begin{proof}[Proof of Theorem \ref{qpieri}]
	\begin{enumerate}
		\item This follows from Proposition \ref{colnodef}. 
		\item By the classical Pieri's formula, 
		\[\sigma_\lambda*\sigma_{1^r}=\sum_{\substack{\lambda_i'=\lambda_i\text{ or }\lambda_i+1, 1\leq i\leq k\\ |\lambda'|=|\lambda|+r}}\sigma_{\lambda'}.\]
		The quantum contribution comes from the sum
		\[\sum_{\substack{\lambda_1'=n-k+1\\ \lambda_i'=\lambda_i\text{ or }\lambda_i+1, 2\leq i\leq k\\|\lambda'^-|=|\lambda^-|+r-1}}\sigma_{\lambda'}.\]
		By quantum Giambelli's formula, for those $\lambda'$ appearing in the above sum, 
		\begin{align*}
			\sigma_{\lambda'}&=\left|\begin{matrix}\sigma_{n-k+1}&\sigma_{n-k+2}&\cdots&\sigma_{n-1}&\sigma_{n}\\ \sigma_{\lambda'_2-1}&\sigma_{\lambda_2'}&\cdots& \sigma_{\lambda'_2+k-3}&\sigma_{\lambda'_2+k-2}\\ \vdots&\vdots&\ddots&\vdots&\vdots\\ \cdots&\cdots&\cdots&\sigma_{\lambda'_k-1}&\sigma_{\lambda'_k}\end{matrix}\right|\\
						&=\left|\begin{matrix}0&0&\cdots&0&(-1)^{k+1}q\\ \sigma_{\lambda'_2-1}&\sigma_{\lambda_2'}&\cdots& \sigma_{\lambda'_2+k-3}&\sigma_{\lambda'_2+k-2}\\ \vdots&\vdots&\ddots&\vdots&\vdots\\ \cdots&\cdots&\cdots&\sigma_{\lambda'_k-1}&\sigma_{\lambda'_k}\end{matrix}\right|\ (\text{by Remark \ref{rowrimhookgen}})\\
						&=q\sigma_{\lambda_2'-1, \cdots, \lambda'_k-1}\ (\text{by cofactor expansion along the first row})
		\end{align*}
		So the quantum contribution can be rewritten as
		\begin{align*}
			&q\sum_{\substack{\lambda'_i=\lambda_i\text{ or }\lambda_i+1, 2\leq i\leq k\\ |\lambda'^-|=|\lambda^-|+r-1}}\sigma_{\lambda_2'-1, \cdots, \lambda_k'-1}\\
			=&q\sum_{\substack{\mu_i=\lambda_{i+1}-1\text{ or }\lambda_{i+1}, 1\leq i\leq k-1\\ |\mu|=|\lambda^-|-(k-r)}}\sigma_\mu\\
			=&q\sum_{\mu\in\lambda^-\ominus 1^{k-r}}\sigma_\mu,
		\end{align*}
		which is what we desire. 
	\end{enumerate}
\end{proof}
\begin{example}
	Let $X=\text{Gr}(5, 10)$, $\lambda=(5, 5, 4, 3, 3)$, $r=2$. The quantum contribution of $\sigma_\lambda*\sigma_{1^2}$, according to Theorem \ref{qpieri}, is given by the Young diagrams below. \\
	\\
	\begin{ytableau}*(white)\times&*(white)\times&*(white)\times&*(white)\times&*(white)\times\\ *(white)&*(white)&*(white)&*(white)&*(red)\\ *(white)&*(white)&*(white)&*(red)\\*(white)&*(white)&*(white)\\*(white)&*(white)&*(red)\end{ytableau}\ \ \ \begin{ytableau}*(white)\times&*(white)\times&*(white)\times&*(white)\times&*(white)\times\\ *(white)&*(white)&*(white)&*(white)&*(red)\\ *(white)&*(white)&*(white)&*(white)\\*(white)&*(white)&*(red)\\*(white)&*(white)&*(red)\end{ytableau}\ \ \ 
	\begin{ytableau}*(white)\times&*(white)\times&*(white)\times&*(white)\times&*(white)\times\\ *(white)&*(white)&*(white)&*(white)&*(white)\\ *(white)&*(white)&*(white)&*(red)\\ *(white)&*(white)&*(red)\\ *(white)&*(white)&*(red)\end{ytableau}\\
	\\
	Together with the classical Pieri's formula, we have $\sigma_\lambda*\sigma_{1^2}=\sigma_{5, 5, 5, 4, 3}+\sigma_{5, 5, 4, 4, 4}+q(\sigma_{4, 3, 3, 2}+\sigma_{4, 4, 2, 2}+\sigma_{5, 3, 2, 2})$.
\end{example}
By duality (Proposition \ref{qduality}), we have the following quantum Pieri's rule for multiplying by a special Schubert class. 
\begin{corollary}\label{rqpieri}
	Consider the quantum product $\sigma_\lambda*\sigma_{r}$, where $\lambda$ is inside the $k\times(n-k)$ rectangle and $1\leq r\leq n-k$. 
	\begin{enumerate}
		\item If $\ell(\lambda)<k$, then $\sigma_\lambda*\sigma_{r}$ does not have any quantum deformation, i.e., 
		\[\sigma_\lambda*\sigma_{r}=\sum_{\lambda'\in\lambda\oplus r}\sigma_{\lambda'}, \]
		where $\lambda\oplus r$ is the set of Young diagrams in the $k\times(n-k)$ rectangle obtained by adding to $\lambda$ $r$ boxes, no two of which are in the same column.
		\item If $\ell(\lambda)=k$, then 
		\[\sigma_\lambda*\sigma_{r}=\sum_{\lambda'\in\lambda\oplus r}\sigma_{\lambda'}+q\sum_{\mu\in\lambda\ominus 1^k\ominus{(n-k-r)}}\sigma_\mu,\]
		where $\lambda\ominus 1^k\ominus{(n-k-r)}$ is the set of Young diagrams obtained by removing one box from each of any $n-k-r$ columns of the Young diagram $\lambda\ominus 1^k:=(\lambda_1-1, \lambda_2-1, \cdots, \lambda_{k}-1)$.
	\end{enumerate}
\end{corollary}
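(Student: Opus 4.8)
The plan is to obtain Corollary \ref{rqpieri} from Theorem \ref{qpieri} via the duality isomorphism of Proposition \ref{qduality}, with the roles of $k$ and $n-k$ swapped. Since Theorem \ref{qpieri} holds for every Grassmannian, I would first apply it to $\text{Gr}(n-k,n)$ in place of $\text{Gr}(k,n)$; there the ambient rectangle is $(n-k)\times k$, the column class is $\sigma_{1^r}$ with $1\le r\le n-k$, and the dichotomy is governed by whether the first part of the diagram is $<k$ or $=k$. I would then transport the resulting identity back to $QH^*(\text{Gr}(k,n),\mathbb{Z})$ along the ring isomorphism $\text{qinv}^{-1}\colon QH^*(\text{Gr}(n-k,n),\mathbb{Z})\to QH^*(\text{Gr}(k,n),\mathbb{Z})$, $\sigma_\mu\mapsto\sigma_{\mu^T}$, $q\mapsto q$, which is again a ring isomorphism by Proposition \ref{qduality} applied to $\text{Gr}(n-k,n)$ and is the inverse of $\text{qinv}$.

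The substance of the argument is the transposition dictionary. Writing $\mu$ for a diagram in the $(n-k)\times k$ rectangle with $\mu^T=\lambda$, one has: $\mu_1$ equals $\ell(\lambda)$, so the hypotheses $\mu_1<k$ and $\mu_1=k$ of Theorem \ref{qpieri} become $\ell(\lambda)<k$ and $\ell(\lambda)=k$; the image of the row class $\sigma_r\in QH^*(\text{Gr}(k,n))$ under $\text{qinv}$ is the column class $\sigma_{1^r}\in QH^*(\text{Gr}(n-k,n))$ (valid for $1\le r\le n-k$); the set $\mu\oplus 1^r$ (add $r$ boxes, no two in the same row) transposes termwise to $\lambda\oplus r$ (add $r$ boxes, no two in the same column); deleting the top row of $\mu$ transposes to deleting the first column of $\lambda$, so $(\mu^-)^T=\lambda\ominus 1^k$; and removing one box from each of any $(n-k)-r$ rows of $\mu^-$ transposes to removing one box from each of any $n-k-r$ columns of $\lambda\ominus 1^k$, i.e. $\mu^-\ominus 1^{(n-k)-r}$ transposes termwise to $\lambda\ominus 1^k\ominus(n-k-r)$.

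Granting this dictionary, part (1) is immediate: apply $\text{qinv}^{-1}$ to the conclusion of Theorem \ref{qpieri}(1) for $\text{Gr}(n-k,n)$ (equivalently, as the discussion preceding the corollary notes, dualize Proposition \ref{colnodef}). Part (2) follows by applying $\text{qinv}^{-1}$ to the conclusion of Theorem \ref{qpieri}(2) for $\text{Gr}(n-k,n)$ and reading off the classical and quantum parts through the dictionary; since $\text{qinv}^{-1}$ is a $\mathbb{Z}[q]$-algebra isomorphism it carries the identity to a valid identity in $QH^*(\text{Gr}(k,n),\mathbb{Z})$. The only real work — and the point I would check carefully — is the last line of the dictionary, namely that ``delete the top row, then remove one box from each of $(n-k)-r$ rows'' transposes correctly to ``delete the first column, then remove one box from each of $n-k-r$ columns''; everything else is a formal consequence of the duality isomorphism.
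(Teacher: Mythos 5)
Your proposal is correct and is exactly the paper's intended argument: the paper derives Corollary \ref{rqpieri} by dualizing Theorem \ref{qpieri} through the ring isomorphism of Proposition \ref{qduality}, which is what you do, and your transposition dictionary (including the step you flag, which amounts to the fact that vertical strips transpose to horizontal strips, so removing one box from each of $(n-k)-r$ rows of $\mu^-$ corresponds to removing one box from each of $n-k-r$ columns of $\lambda\ominus 1^k$) is accurate.
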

\begin{remark}
	The quantum part of the quantum Pieri's formula in \cite[Proposition 4.2]{BCFF} for $\sigma_\lambda*\sigma_{1^r}$ is described as a summation over the set of Young diagrams obtained by removing $n-r$ boxes in the southeast boundary of $\lambda$, at least one box from each column of $\lambda$. In fact this description is equivalent to that in Theorem \ref{qpieri} because the set of these Young diagrams is in fact the same as $\lambda^-\ominus 1^{k-r}$: simply note that there is a bijection which associates any Young diagram $\mu$ from $\lambda^-\ominus 1^{k-r}$ to the Young diagram $\mu'$ obtained by removing from $\lambda$ the $k-r$ boxes $\lambda^-\setminus\mu$ and then the bottom 1 box in each of the $n-k$ columns. The Young diagram $\mu'$ satisfies the description in \cite{BCFF} and in fact the same as $\mu$. Similarly we can see that the quantum Pieri's formula for $\sigma_\lambda*\sigma_r$ in Corollary \ref{rqpieri} is equivalent to \cite[Equation (22)]{BCFF}.
\end{remark}
Finally we will present a version of rim hook algorithm inspired by the one in \cite{BCFF}. An $n$-rim hook of a Young diagram $\lambda$ is a sequence of $n$ boxes starting with the rightmost box of a certain row and arranged along the southeast rim of $\lambda$ in the leftward and downward direction, such that what remains after deleting these boxes is still a Young diagram. The following diagram illustrates an example of a 9-rim hook in the Young diagram $(6, 3, 3, 1)$.
\begin{center}
	\begin{ytableau}*(white)&*(white)&*(red)&*(red)&*(red)&*(red)\\ *(white)&*(white)&*(red)\\ *(red)&*(red)&*(red)\\ *(red)
	\end{ytableau}
\end{center}

\begin{theorem}[Rim hook algorithm]\label{rimhook}
	Let $\lambda$ be a Young diagram with $\ell(\lambda)\leq k$. 
	\begin{enumerate} 
		\item If $\lambda$ has an $n$-rim hook, then
		\[\sigma_\lambda=(-1)^{k-w}q\sigma_{\lambda'}, \]
		where $w$ is the number of rows occupied by the rim hook and $\lambda'$ is the Young diagram remaining after removing the rim hook from $\lambda$. 
	\item If $\lambda_1>n-k$ and $\lambda$ does not have an $n$-rim hook, then $\sigma_\lambda=0$.
	\end{enumerate}
\end{theorem}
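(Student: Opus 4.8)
\emph{Proof proposal.} The plan is to read off both parts of Theorem~\ref{rimhook} from the Giambelli determinant of Definition~\ref{genschu} together with the single ``one--step'' identity
\[\sigma_r=(-1)^{k+1}q\,\sigma_{r-n}\qquad(r\ge n-k+1),\]
which is immediate from Remark~\ref{rowrimhook} (for $r\ge n$ it is the $\ell\mapsto\ell-1$ case of $\sigma_{\ell n+p}=(-1)^{\ell(k+1)}q^{\ell}\sigma_p$, and for $n-k<r<n$ both sides vanish by Remark~\ref{rowrimhookgen}), combined with the classical combinatorics of $\beta$-numbers for rim hooks.

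First I would set up the determinant. Since $\ell(\lambda)\le k$ and $\sigma_0=1$, $\sigma_r=0$ for $r<0$, a block--triangularity argument shows $\sigma_\lambda=\det(\sigma_{\lambda_i+j-i})_{1\le i,j\le k}$ after padding $\lambda$ by zeros to length $k$. Passing to the $\beta$-numbers $\mu_i:=\lambda_i+k-i$ — a strictly decreasing sequence of non--negative integers — this reads $\sigma_\lambda=D(\mu_1,\dots,\mu_k)$, where for an arbitrary integer $k$-tuple I write $D(a_1,\dots,a_k):=\det(\sigma_{a_i-k+j})_{1\le i,j\le k}$; note $D$ is alternating in its arguments, hence vanishes whenever two of them coincide. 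Next I would record the consequence of the one--step identity: if some $\mu_a\ge n$, then every entry $\sigma_{\mu_a-k+j}$ of the $a$-th row is reducible (directly when $\mu_a-k+j\ge n$, and when $n-k<\mu_a-k+j<n$ because then $\mu_a-n-k+j<0$ and both sides vanish), so one may factor $(-1)^{k+1}q$ out of that row:
\[D(\dots,\mu_a,\dots)=(-1)^{k+1}q\,D(\dots,\mu_a-n,\dots).\]

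The combinatorial input is the classical dictionary (cores and quotients, the abacus; e.g.\ James--Kerber or Macdonald): removing an $n$-rim hook from $\lambda$ whose top box lies in row $a$ amounts to replacing $\mu_a$ by $\mu_a-n$ in the $\beta$-set, this being admissible exactly when $\mu_a-n\ge 0$ and $\mu_a-n\notin\{\mu_i:i\ne a\}$; the number $w$ of rows the hook occupies equals $1$ plus the number of $\mu_i$ lying strictly between $\mu_a-n$ and $\mu_a$; and sorting the new $k$-tuple back into decreasing order yields the $\beta$-set of the partition $\lambda'$ left after removal. For part~(1): given such a hook we have $\mu_a\ge n$, so by the displayed factorization $\sigma_\lambda=(-1)^{k+1}q\,D(\mu_1,\dots,\mu_a-n,\dots,\mu_k)$; re--sorting moves $\mu_a-n$ to the right past exactly $w-1$ of the remaining $\mu_i$, contributing a sign $(-1)^{w-1}$ and producing $D$ evaluated at the $\beta$-set of $\lambda'$, i.e.\ $\sigma_{\lambda'}$. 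Hence $\sigma_\lambda=(-1)^{k+1}(-1)^{w-1}q\,\sigma_{\lambda'}=(-1)^{k-w}q\,\sigma_{\lambda'}$, using $(-1)^{k+w}=(-1)^{k-w}$. For part~(2): $\lambda_1>n-k$ forces $\mu_1=\lambda_1+k-1\ge n$, so $\sigma_\lambda=(-1)^{k+1}q\,D(\mu_1-n,\mu_2,\dots,\mu_k)$; if $\lambda$ has no $n$-rim hook then in particular none starts in row $1$, and since $\mu_1-n\ge 0$ this can only mean $\mu_1-n\in\{\mu_2,\dots,\mu_k\}$, so the $D$ on the right has two equal arguments and vanishes, giving $\sigma_\lambda=0$.

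The main obstacle is making the $\beta$-number/rim--hook dictionary precise and self--contained — in particular the identity $w=1+\#\{\mu_i\text{ strictly between }\mu_a-n\text{ and }\mu_a\}$ and the fact that the sorted new $\beta$-set is exactly that of $\lambda'$; this is standard but deserves either a short verification or a clear citation. A couple of routine points also need checking: that the $k\times k$ Giambelli determinant really computes $\sigma_\lambda$ after padding, and that the one--step identity applies to \emph{all} entries arising in the relevant row (including the degenerate sub--case where both sides are $0$). As a sanity check, not needed for the proof, one may also observe that the output is independent of which $n$-rim hook is removed, which is what makes the iterated algorithm well defined.
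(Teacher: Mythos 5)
Your proposal is correct and is essentially the paper's own argument: both rewrite one row of the Giambelli determinant via the reduction $\sigma_r=(-1)^{k+1}q\,\sigma_{r-n}$ (Remark \ref{rowrimhookgen}) and then account for the sign of the induced row permutation, giving $(-1)^{k+1}(-1)^{w-1}=(-1)^{k-w}$. The only difference is bookkeeping: you encode the combinatorics in padded $\beta$-numbers and the standard abacus/rim-hook dictionary (which lets you handle part (2) in a single case, and which you rightly flag as needing a citation or short verification), whereas the paper describes $\lambda'$ and the two failure modes of part (2) --- proportional rows versus a vanishing first row --- directly on the determinant.
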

\begin{proof}
	Suppose $\lambda$ has an $n$-rim hook, which starts with the last box in the $\ell_1$-th row and end with a box in the $\ell_2$-th row. After deleting the $n$-rim hook, the remaining Young diagram $\lambda'$ satisfies
	\begin{enumerate}
		\item $\lambda_i'=\lambda_i$ for $1\leq i\leq \ell_1-1$ and $i\geq\ell_2+1$, 
		\item $\lambda'_i=\lambda_{i+1}-1$ for $\ell_1\leq i\leq\ell_2-1$,
		\item $\lambda'_{\ell_2}=\lambda_{\ell_1}-n+\ell_2-\ell_1\geq\lambda_{\ell_2+1}$.
	\end{enumerate}
	Consider the quantum Giambelli's formula for $\sigma_\lambda$. The $i$-th row of the relevant matrix is $(\cdots, \sigma_{\lambda_i}, \cdots, \sigma_{\lambda_i+k-i})$. The $\ell_1$-th row $(\cdots, \sigma_{\lambda_{\ell_1}}, \cdots, \sigma_{\lambda_{\ell_1}+k-\ell_1})$, by Remark \ref{rowrimhookgen}, is $(-1)^{k+1}q(\cdots, \sigma_{\lambda_{\ell_1}-n}, \cdots, \sigma_{\lambda_{\ell_1}+k-\ell_1-n})$. Moving this row to the $\ell_2$-th row and shifting the $(\ell_1+1)$st, $(\ell_1+2)$nd, $\cdots$, $\ell_2$-th rows up by one row result in $(-1)^{k+1}q\times$Giambelli's formula for $\sigma_{\lambda'}$. Overall we have
	\begin{align*}
		\sigma_\lambda&=(-1) ^{k+1}q\cdot(-1)^{\ell_2-\ell_1}\sigma_{\lambda'}\\
							&\ (\text{the factor }(-1)^{\ell_2-\ell_1}\text{ is the result of the cyclic permutation of the rows }\\
							&\ \text{from the }\ell_1\text{-th to the }\ell_2\text{-th)}\\
							&=(-1) ^{k-(\ell_2-\ell_1+1)}q\sigma_{\lambda'}\\
							&=(-1) ^{k-w}q\sigma_{\lambda'}.
	\end{align*}
	If $\lambda_1>n-k$ and $\lambda$ does not have any $n$-rim hook, then there are two possibilities. 
	\begin{enumerate}
		\item After deleting $n$ boxes along the rim starting with the rightmost box in the first row, what remains is not a Young diagram. This amounts to the equation $\lambda_{\ell_1}-n+\ell_2-\ell_1=\lambda_{\ell_2+1}-1$. Thus the $\ell_1$-th row $(-1)^{k+1}q(\cdots, \sigma_{\lambda_{\ell_1}-n}, \cdots, \sigma_{\lambda_{\ell_1}+k-\ell_1-n})$ of the Giambelli matrix is a multiple of the $(\ell_2+1)$st row. It follows that $\sigma_\lambda=0$. 
		\item There are not enough boxes along the rim, even we start with the rightmost box in the first row of $\lambda$. As the total number of boxes along the rim is $\lambda_1+\ell(\lambda)-1$, we have $\lambda_1+\ell(\lambda)-1<n$. The first row of the Giambelli matrix for $\sigma_\lambda$ is $(\sigma_{\lambda_1}, \cdots, \sigma_{\lambda_1+\ell(\lambda)-1})$, which is the zero row vector by Remark \ref{rowrimhookgen}. So again $\sigma_\lambda=0$. 
	\end{enumerate}
\end{proof}
To compute the quantum Littlewood-Richardson coefficient $c_{\lambda\mu}^\nu$ where $|\lambda|+|\mu|=|\nu|+dn$, $d>0$, we may follow the procedure below which is based on Theorem \ref{rimhook} and inspired by the rim hook algorithm in \cite{BCFF}. 
\begin{enumerate}
	\item Find the set $\Pi$ of Young diagrams $\pi$ satisfying
	\begin{enumerate}
		\item $\ell(\pi)\leq k$, 
		\item\label{b} $\pi_1-\lambda_1\leq \mu_1$, 
		\item $\lambda\subseteq\pi$, and 
		\item $\pi$ is obtained by adjoining $\nu$ with $d$ $n$-rim hooks. 
	\end{enumerate}
	Let the number of rows occupied by the $d$ $n$-rim hooks be $w_{\pi, 1}, \cdots, w_{\pi, d}$. 
	\item Find $c_{\lambda\mu}^\pi$ using the classical Littlewood-Richardson rule. Note that Condition (\ref{b}) in Step 1 just excludes the possibility that $c_{\lambda\mu}^\pi=0$ due to the reverse lattice word condition applied to the first row of the skew Young tableau $\pi\setminus\lambda$. 
	\item The quantum Littlewood-Richardson coefficient $c_{\lambda\mu}^\nu$ is the sum over $\Pi$ of signed Littlewood-Richardson coefficient $c_{\lambda\mu}^\pi$, i.e., 
	\[c_{\lambda\mu}^\nu=q^d\left(\sum_{\pi\in \Pi}(-1)^{kd-\sum_{i=1}^dw_{\pi, i}}c_{\lambda\mu}^\pi\right).\]
\end{enumerate}
\begin{remark}
	One may use the dual algebra presentation for the equivariant cohomology of Grassmannians in terms of the other set of characteristic classes $\sigma_\lambda'$ (see Definition \ref{dualchar}) and their properties in Remark \ref{dualprop} to get the dual version of Theorem \ref{rimhook}, which essentially is \cite[Main Lemma]{BCFF}, and hence the rim hook rule \emph{loc. cit}. The condition $\lambda_1>n-k$ for the vanishing of $\sigma_\lambda$ in Theorem \ref{rimhook}(2) is dual to the condition $\lambda_{k+1}>0$ (i.e. $\lambda^T_1>k$) in \cite[Main Lemma (A)]{BCFF} for the vanishing of $\sigma_\lambda$. The assumption $\ell(\lambda)\leq k$ in Theorem \ref{rimhook} is put in place to exclude those $\sigma_\lambda$ with $\ell(\lambda)>k$, which are 0 by Remark \ref{rowrimhookgen}. The condition $\ell(\lambda)>k$ is then dual to the condition $\lambda_1>n-k$ (i.e. $\ell(\lambda^T)>n-k$) in \cite[Main Lemma (A)]{BCFF} for the vanishing of $\sigma_\lambda$. The number of rows $w$ occupied by a rim hook which appears in the formula in Theorem \ref{rimhook} is dual to the width of a rim hook in apparently the same formula in \cite[Main Lemma (B)]{BCFF}. 
\end{remark}
\begin{example}(\cite[Example 1]{BCFF}). Let $X=\text{Gr}(5, 10)$, $\lambda=(5, 4, 4, 2, 2)$, $\mu=(3, 2, 1)$, $\nu=(2, 1)$. Then $d=2$, and $\Pi$ consists of only one Young diagram $\pi=(7, 7, 4, 3, 2)$. There is only one skew Young tableau in the shape $\pi\setminus\lambda$ satisfying the reverse lattice word condition, as shown below.
	
	\begin{ytableau}
		*(white)&*(white)&*(blue)&*(blue)&*(blue)&*(blue)1&*(green)1\\
		*(white)&*(blue)&*(blue)&*(green)&*(green)1&*(green)2&*(green)2\\
		*(blue)&*(blue)&*(green)&*(green)\\
		*(blue)&*(green)&*(green)3\\
		*(blue)&*(green)
	\end{ytableau}
	
	Thus $c_{\lambda\mu}^\pi=1$ and $c_{\lambda\mu}^\nu=q^2\cdot(-1)^{5+5-5-5}c_{\lambda\mu}^\pi=q^2$. The rim hook algorithm in \cite{BCFF} produces three skew Young tableaux (instead of one as in our computation), two in the shape $(5, 5, 4, 3, 2, 2, 2)$ with positive sign and one in the shape $(5, 4, 4, 3, 2, 2, 2, 1)$ with negative sign. So these signed contributions add up to 1, which is consistent with our computation. 
\end{example}
\begin{example}(\cite[Example 2]{BCFF}). Let $\lambda=(3, 3, 2, 1)$, $\mu=(4, 3, 2, 1)$ and $\nu=(4, 2, 2, 1)$. If $X=\text{Gr}(4, 10)$. Then $\Pi$ is empty and so $c_{\lambda\mu}^\nu=0$. The rim hook algorithm in \cite{BCFF} produces 8 skew Young tableaux with positive sign and 8 skew Young tableaux with negative sign, resulting in the zero signed sum, in agreement with our analysis. 

If $X=\text{Gr}(5, 10)$, then $\Pi$ consists of one Young diagram $\pi=(6, 5, 3, 3, 2)$. It turns out that $c_{\lambda\mu}^\pi=6$, as there are 6 skew Young tableaux in the shape $\pi\setminus\lambda$ satisfying the reverse lattice word condition as below.

\begin{ytableau}
	*(white)&*(white)&*(white)&*(white)1&*(green)1&*(green)1\\
	*(white)&*(white)&*(green)&*(green)2&*(green)2\\
	*(white)&*(white)&*(green)1\\
	*(white)&*(green)2&*(green)3\\
	*(green)3&*(green)4
\end{ytableau}
\begin{ytableau}
	*(white)&*(white)&*(white)&*(white)1&*(green)1&*(green)1\\
	*(white)&*(white)&*(green)&*(green)2&*(green)2\\
	*(white)&*(white)&*(green)1\\
	*(white)&*(green)3&*(green)3\\
	*(green)2&*(green)4
\end{ytableau}
\begin{ytableau}
	*(white)&*(white)&*(white)&*(white)1&*(green)1&*(green)1\\
	*(white)&*(white)&*(green)&*(green)2&*(green)2\\
	*(white)&*(white)&*(green)2\\
	*(white)&*(green)1&*(green)3\\
	*(green)3&*(green)4
\end{ytableau}

\begin{ytableau}
	*(white)&*(white)&*(white)&*(white)1&*(green)1&*(green)1\\
	*(white)&*(white)&*(green)&*(green)2&*(green)2\\
	*(white)&*(white)&*(green)2\\
	*(white)&*(green)3&*(green)3\\
	*(green)1&*(green)4
\end{ytableau}
\begin{ytableau}
	*(white)&*(white)&*(white)&*(white)1&*(green)1&*(green)1\\
	*(white)&*(white)&*(green)&*(green)2&*(green)2\\
	*(white)&*(white)&*(green)3\\
	*(white)&*(green)1&*(green)4\\
	*(green)2&*(green)3
\end{ytableau}
\begin{ytableau}
	*(white)&*(white)&*(white)&*(white)1&*(green)1&*(green)1\\
	*(white)&*(white)&*(green)&*(green)2&*(green)2\\
	*(white)&*(white)&*(green)3\\
	*(white)&*(green)2&*(green)4\\
	*(green)1&*(green)3
\end{ytableau}

So $c_{\lambda\mu}^\nu=6q$. The rim hook algorithm in \cite{BCFF} involves 8 skew Young tableaux with positive sign and 2 skew Young tableaux with negative sign, yielding a signed sum of 6, in agreement with our result. 
\end{example}
\section{Equivariant quantum Schubert calculus}\label{eqquantumSchCal}
In this section, we will obtain an algorithm for multiplication in the equivariant quantum cohomology ring of Grassmannians in terms of characteristic classes.

The equivariant quantum cohomology of flag manifolds (including Grassmannians as an example), which jointly generalizes their equivariant and quantum cohomology, was first defined and studied in \cite{GK} and \cite{Ki}. The latter paper gives an algebra presentation of the equivariant quantum cohomology ring using characteristic classes of universal quotient bundles of flag manifolds. Later in \cite{Mi} and \cite{Mi2}, rules for multiplying canonical Schubert classes in the equivariant quantum cohomology of Grassmannians were presented in the form of Pieri- and Giambelli-type formulae and a recursive algorithm for general equivariant quantum Littlewood-Richardson coefficients based on the associativity of the multiplication. We will see how these previous results, together with those in Sections \ref{equivsch} and \ref{qsch}, enable us to deduce a multiplication rule for characteristic classes in the equivariant quantum cohomology ring.

Let us use $\sigma_\lambda^\text{can}$ to denote the canonical Schubert class corresponding to the Young diagram $\lambda$ in $QH_T^*(X, \mathbb{Z})$, which by definition is the Poincar\'e dual of the homotopy quotient $ET\times_T X_\lambda$ in $ET\times_T X$. Both $\{\sigma_\lambda^\text{can}\}_{\lambda\subseteq k\times(n-k)}$ and $\{\widehat{\sigma}_\lambda\}_{\lambda\subseteq k\times(n-k)}$ are $H_T^*(\text{pt}, \mathbb{Z})[q]$-module bases for $QH_T^*(X, \mathbb{Z})$. Note that the equivariant formality of the $T$-action on $X$ implies injectivity of the restriction map $r^*: H_T^*(X, \mathbb{Z})\to H_T^*(X^T, \mathbb{Z})\cong H_T^*(\text{pt}, \mathbb{Z})^{\oplus\binom{n}{k}}$. Let us adopt the following conventions from \cite{KT}: we label the fixed points in $X^T$ by the set of 01-strings with $k$ 0's and $n-k$ 1's. To be more precise, the labeling is given by the following map.
	\begin{align*}
		\{\text{01-strings of type }(k, n-k)\}&\to X^T\\
		b&\mapsto V_b:=\text{span}\{v_i| b_i=0, 1\leq i\leq k\},
	\end{align*}
	where $\{v_i\}_{i=1}^n$ is the standard ordered basis of $\mathbb{C}^n$. One can further label a 01-string with a Young diagram in the $k\times(n-k)$ rectangle: a Young diagram $\lambda$ corresponds to the 01-string obtained by labeling each vertical segment and horizontal segment of the southeast boundary of $\lambda$ by 0 and 1 respectively, and reading the labels from the southwest corner to the northeast corner. Let $b(\lambda)$ be the 01-string associated with $\lambda$, and $\lambda(b)$ the Young diagram associated with $b$. With these labels, one can define the Bruhat order on $X^T$ and 01-strings. We say $V_b>V_c$ and $b>c$ if $\lambda(b)$ includes $\lambda(c)$ as a subdiagram. Let $\sigma$ be an equivariant cohomology class in $H_T^*(X, \mathbb{Z})$. Define $(r^*(\sigma))_\mu$ to be the restriction of $\sigma$ to the fixed point $V_{b(\mu)}$. We say $\sigma$ is supported above $\mu$ if $(r^*(\sigma))_{\mu'}\neq 0$ implies $\mu\subseteq\mu'$. There is a useful characterization of canonical Schubert classes (cf. \cite[Lemma 1]{KT} and the preceding discussions) in terms of their restriction to $X^T$: $\sigma_\lambda^\text{can}$ is the unique equivariant cohomology class satisfying
	\begin{enumerate}
		\item\label{canvanish} $\sigma_\lambda^\text{can}$ is supported above $\lambda$, i.e., $(r^*(\sigma_\lambda^\text{can}))_\mu:=$restriction of $\sigma_\lambda^\text{can}=0$ for any $\mu\subset\lambda$ or $\mu$ and $\lambda$ not comparable, 
		\item\label{minrestriction} $\displaystyle(r^*(\sigma_\lambda^\text{can}))_\lambda=\prod_{\substack{1\leq i<j\leq n\\ b(\lambda)_i=1, b(\lambda)_j=0}}(t_i-t_j)$,
		\item $(r^*(\sigma_\lambda^\text{can}))_\mu$ is a homogeneous polynomial in $t_1, \cdots, t_n$ of degree $|\lambda|$, and
		\item(the GKM condition) if $b(\mu_1)$ and $b(\mu_2)$ only differ in the $i$-th and $j$-th entries, then $(r^*(\sigma_\lambda^\text{can}))_{\mu_1}-(r^*(\sigma_\lambda^\text{can}))_{\mu_2}$ is divisible by $t_i-t_j$. 
	\end{enumerate}\begin{proposition}\label{cantochar}
	In $QH_T^*(X, \mathbb{Z})$, for $1\leq r\leq k$, 
	\[\widehat{\sigma}_{1^r}=\sum_{i=0}^r(-1)^{r-i}e_{r-i}(k-i)\sigma_{1^{i}}^\text{can},\]
	where $e_i(m)$ is the $i$-th elementary symmetric polynomial in the $m$ equivariant variables $t_1, \cdots, t_m$.
\end{proposition}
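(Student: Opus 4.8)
The plan is to prove the identity first in $H_T^*(X,\mathbb{Z})$ and then read it off in $QH_T^*(X,\mathbb{Z})$. The key observation is that every term in the asserted equation has cohomological degree $r\le k<n$ while $q$ has degree $n$, so all the classes involved already live in $H_T^*(X,\mathbb{Z})$ (realized inside $QH_T^*(X,\mathbb{Z})$ as the part of $q$-degree zero, the inclusion being injective), and $\sigma_{1^i}^{\text{can}}$ and $\widehat\sigma_{1^r}$ in $QH_T^*$ are by construction the images of the corresponding classes in $H_T^*(X,\mathbb{Z})$. Thus it suffices to prove $\widehat\sigma_{1^r}=\sum_{i=0}^r(-1)^{r-i}e_{r-i}(k-i)\sigma_{1^i}^{\text{can}}$ in $H_T^*(X,\mathbb{Z})$; and since the $T$-action on $X$ is equivariantly formal, the restriction map $r^*\colon H_T^*(X,\mathbb{Z})\to\bigoplus_\mu H_T^*(\text{pt},\mathbb{Z})$ to the fixed points is injective, so I would verify the identity one fixed point at a time.

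For the left-hand side, recall from Remark \ref{coincide} that $\widehat\sigma_{1^r}=\widetilde\sigma_{1^r}=c_r^T(S^*)$. At the fixed point $V_{b(\mu)}$ the tautological subbundle restricts to the coordinate subspace spanned by the $v_j$ with $b(\mu)_j=0$, so $S^*|_{V_{b(\mu)}}$ splits $T$-equivariantly into lines whose weights are (up to the paper's sign convention on the $T$-action on $\mathbb{C}^n$) the negatives of the $k$ characters $t_j$ with $b(\mu)_j=0$. Hence $\bigl(r^*(\widehat\sigma_{1^r})\bigr)_\mu=(-1)^r e_r\bigl(\{t_j:b(\mu)_j=0\}\bigr)$; equivalently, packaging all $r$ together, $\sum_r\bigl(r^*(\widehat\sigma_{1^r})\bigr)_\mu u^r=\prod_{b(\mu)_j=0}(1-t_j u)$ for a formal variable $u$.

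For the right-hand side I would invoke the standard formula for restrictions of canonical Schubert classes on a Grassmannian: $\bigl(r^*(\sigma_{1^i}^{\text{can}})\bigr)_\mu$ is the $i$-th factorial elementary symmetric polynomial in the $t$-variables attached to $\mu$ (this can be extracted from \cite{KT}, recovered from the characterization (\ref{canvanish})--(4) preceding the statement, or obtained from a Kempf--Laksov determinantal expression). Substituting into the claimed equality turns it, at each fixed point, into a polynomial identity in $t_1,\dots,t_n$: an ordinary elementary symmetric polynomial on one side, an alternating combination of products $e_{r-i}(t_1,\dots,t_{k-i})$ with factorial elementary symmetric polynomials on the other. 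I would prove this by forming the generating series $\sum_{i=0}^k\bigl(r^*(\sigma_{1^i}^{\text{can}})\bigr)_\mu u^i\prod_{l=1}^{k-i}(1-t_l u)$ and checking, via the recursion $e_m(t_1,\dots,t_N)=e_m(t_1,\dots,t_{N-1})+t_N e_{m-1}(t_1,\dots,t_{N-1})$ applied termwise, that it collapses to $\prod_{b(\mu)_j=0}(1-t_j u)$, matching the left-hand side. The fixed point $\mu=\emptyset$ is a useful sanity check: there $\sigma_{1^i}^{\text{can}}$ restricts to $\delta_{i0}$ and $S^*$ to the lines of weights $-t_1,\dots,-t_k$, so both sides collapse to $(-1)^r e_r(t_1,\dots,t_k)$, which also explains why the coefficient of $\sigma_{1^0}^{\text{can}}=1$ is exactly $(-1)^r e_r(t_1,\dots,t_k)$.

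The step I expect to be the real obstacle is not the symmetric-function identity, which is elementary once set up, but the bookkeeping of conventions needed to align the two generating functions exactly: one must correctly match the $T$-weights of $S$ at each fixed point $V_{b(\mu)}$ with the right characters $t_j$ (accounting for the labeling of fixed points by $01$-strings and the index reversal between the basis of $\mathbb{C}^n$ and the variables $t_1,\dots,t_n$ forced by the normalization (\ref{minrestriction})), and correspondingly determine precisely which variables, and with which index shifts, occur in the factorial elementary symmetric polynomial $\bigl(r^*(\sigma_{1^i}^{\text{can}})\bigr)_\mu$. An alternative that bypasses fixed points is to prove the equivalent closed form $\sum_r c_r^T(S^*)u^r=\sum_{i=0}^k\sigma_{1^i}^{\text{can}}u^i\prod_{l=1}^{k-i}(1-t_l u)$ directly by induction, using the equivariant Whitney formula together with the equivariant Pieri rule for $\sigma_1^{\text{can}}$ to pass between consecutive column classes; this trades the fixed-point bookkeeping for care with the equivariant correction terms in that Pieri rule.
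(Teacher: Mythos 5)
Your proposal is correct in outline but proves the key identity by a genuinely different mechanism than the paper. You and the paper begin identically: reduce to $H_T^*(X,\mathbb{Z})$ by the degree argument ($\deg q=n>r$), use equivariant formality to make $r^*$ injective, and compute $\bigl(r^*(\widehat{\sigma}_{1^r})\bigr)_\mu=(-1)^re_r$ of the variables $t_j$ with $b(\mu)_j=0$. You then diverge: you propose to verify the identity at \emph{every} fixed point, importing the explicit localization formula for $\sigma_{1^i}^{\text{can}}$ (factorial elementary symmetric polynomials) and reducing the claim to an elementary generating-series identity $\sum_i\bigl(r^*(\sigma_{1^i}^{\text{can}})\bigr)_\mu u^i\prod_{l=1}^{k-i}(1-t_lu)=\prod_{b(\mu)_j=0}(1-t_ju)$. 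The paper never uses (or proves) restriction formulas for canonical classes at general fixed points: it runs the Knutson--Tao algorithm, using only the characterizing properties (\ref{canvanish})--(\ref{minrestriction}), degree homogeneity and a variable-swapping (GKM-type) observation, and checks restrictions only along the chain of column partitions $1^\ell$ by an induction (Claim \ref{claim}); upper triangularity plus a degree count then forces the remainder to vanish. Your route is more direct once the restriction formula is in hand and turns the whole proposition into a one-line symmetric-function identity (which does check out, e.g.\ on $\mathrm{Gr}(1,2)$, $\mathrm{Gr}(2,3)$), but that formula is precisely the external ingredient the paper's argument is designed to avoid, and, as you rightly flag, pinning down its exact form in the conventions of \cite{KT} (which variables, which shifts, which signs) is the step that needs care --- you would either have to cite it precisely (it follows, e.g., from the factorial Schur representatives of \cite{Mi2}) or derive it, at which point the work is comparable to the paper's induction. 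The paper's approach buys self-containedness and fewer fixed points to examine; yours buys conceptual transparency and a cleaner final computation. Your proposed alternative via equivariant Whitney plus the equivariant Pieri rule for $\sigma_1^{\text{can}}$ would also work but again imports a nontrivial equivariant multiplication rule not otherwise used in the paper.
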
 
\begin{proof}
	As the set of canonical Schubert classes is a $H_T^*(\text{pt}, \mathbb{Z})[q]$-module basis for $QH_T^*(X, \mathbb{Z})$, $\widehat{\sigma}_{1^r}$ can be expressed as a $H_T^*(\text{pt}, \mathbb{Z})[q]$-linear combination of canonical Schubert classes. This expression does not involve quantum deformation because the degree of $q$, which is $n$, is greater than that of $\widehat{\sigma}_{1^r}$, which is $r$. 
	
	Note that the injectivity of the restriction map $r^*$ implied by the equivariant formality of the $T$-action allows us to prove the desired equation at the level of the restriction of equivariant Schubert classes to fixed points. By the definition of $\widehat{\sigma}_{1^r}$ as $c_r(S^*)$, we have that 
	\[(r^*(\widehat{\sigma}_{1^r}))_\mu=(-1)^re_r(t_{i_1}, t_{i_2}, \cdots, t_{i_k})\]
	where $b(\mu)_{i_j}=0$ for $1\leq j\leq k$ and $e_r(t_{i_1}, t_{i_2}, \cdots, t_{i_k})$ is the $r$-th elementary symmetric polynomial in $t_{i_1}, t_{i_2}, \cdots, t_{i_k}$. To prove that $\widehat{\sigma}_{1^r}$ is the linear combination of $\sigma_{1^i}^\text{can}$ for $0\leq i\leq r$ given in the Proposition, we shall use the `upper triangularity' of the canonical Schubert classes with respect to the Bruhat order of $X^T$ and follow the algorithm of expressing a general cohomology class as a linear combination of canonical Schubert classes as outlined in the proof of \cite[Proposition 1]{KT}: let $\sigma$ be an equivariant cohomology class which is supported above $\mu$. Then $(r^*(\sigma))_\mu=\beta\cdot(r^*(\sigma_\mu^\text{can}))_\mu$ for some $\beta\in H_T^*(\text{pt}, \mathbb{Z})$. Then $\sigma-\beta\cdot\sigma_\mu^\text{can}$ is supported above a certain Young diagram $\mu'$ which includes $\mu$. Repeat the same procedure to $\sigma-\beta\cdot\sigma_\mu^\text{can}$ until we get 0. We find the following useful in carrying out this algorithm to our problem at hand.
	\begin{claim}\label{claim}
		For $j\leq r-1$, 
		\begin{align*}
			&\left(r^*\left(\widehat{\sigma}_{1^r}-\sum_{i=0}^j(-1)^{r-i}e_{r-i}(k-i)\sigma_{1^i}^\text{can}\right)\right)_{1^\ell}\\
			=&\begin{cases}\displaystyle(-1)^{r-j-1}e_{r-j-1}(k-j-1)\prod_{p=1}^{j+1}(t_{k-j}-t_{k-j+p})&,\ \text{if }\ell=j+1\\ 0&,\ \text{if }\ell\leq j\end{cases}.
		\end{align*}
	\end{claim}
	We shall prove by induction on $j$. The case $j=0$ is easy and left to the reader. Assume that the claim is true for $j$. We shall show that
	\begin{align*}
			&\left(r^*\left(\widehat{\sigma}_{1^r}-\sum_{i=0}^{j+1}(-1)^{r-i}e_{r-i}(k-i)\sigma_{1^i}^\text{can}\right)\right)_{1^\ell}\\
			=&\begin{cases}\displaystyle(-1)^{r-j-2}e_{r-j-2}(k-j-2)\prod_{p=1}^{j+2}(t_{k-j-1}-t_{k-j-1+p})&,\ \text{if }\ell=j+2\\ 0&,\ \text{if }\ell\leq j+1\end{cases}.
		\end{align*}
	Note that for $\ell\leq j$, 
	\begin{align*}
		&\left(r^*\left(\widehat{\sigma}_{1^r}-\sum_{i=0}^{j+1}(-1)^{r-i}e_{r-i}(k-i)\sigma_{1^i}^\text{can}\right)\right)_{1^\ell}\\
		=&\left(r^*\left(\widehat{\sigma}_{1^r}-\sum_{i=0}^{j}(-1)^{r-i}e_{r-i}(k-i)\sigma_{1^i}^\text{can}\right)\right)_{1^\ell}-(r^*((-1)^{r-j-1}e_{r-j-1}(k-j-1)\sigma_{1^{j+1}}^\text{can}))_{1^\ell}\\
		=&0-0\ (\text{by induction hypothesis and (\ref{canvanish}) of characterization of canonical Schubert classes})\\
		=&0.
	\end{align*}
	For $\ell=j+1$, 
	\begin{align*}
		&\left(r^*\left(\widehat{\sigma}_{1^r}-\sum_{i=0}^{j}(-1)^{r-i}e_{r-i}(k-i)\sigma_{1^i}^\text{can}\right)\right)_{1^\ell}\\
		=&(-1)^{r-j-1}e_{r-j-1}(k-j-1)\prod_{p=1}^{j+1}(t_{k-j}-t_{k-j+p})\ (\text{by induction hypothesis})\\
		=&(r^*((-1)^{r-j-1}e_{r-j-1}(k-j-1)\sigma_{1^{j+1}}^\text{can}))_{1^{j+1}}\\
		& (\text{by (\ref{minrestriction}) of characterization of canonical Schubert classes}).
	\end{align*}
	So $\displaystyle\left(r^*\left(\widehat{\sigma}_{1^r}-\sum_{i=0}^{j+1}(-1)^{r-i}e_{r-i}(k-i)\sigma_{1^i}^\text{can}\right)\right)_{1^\ell}=0$ for $\ell\leq j+1$. It remains to show that 
	\begin{align*}
		&\left(r^*\left(\widehat{\sigma}_{1^r}-\sum_{i=0}^{j+1}(-1)^{r-i}e_{r-i}(k-i)\sigma_{1^i}^\text{can}\right)\right)_{1^{j+2}}\\
		=&(-1)^{r-j-2}e_{r-j-2}(k-j-2)\prod_{p=1}^{j+2}(t_{k-j-1}-t_{k-j-1+p}).
	\end{align*}
	Now we shall use the following observation: let $\sigma\in H_T^*(X, \mathbb{Z})$ be an equivariant Schubert class which is a characteristic class, or a canonical Schubert class, $b_1$ and $b_2$ two 01-strings which differ only in the $i$-th and the $j$-th entries. If $(r^*(\sigma))_{\lambda(b_1)}$ involves the variable $t_i$ or $t_j$, then $(r^*(\sigma))_{\lambda(b_2)}$ is $(r^*(\sigma))_{\lambda(b_1)}$ with $t_i$ and $t_j$ swapped. Note that the 01-strings associated with $1^{j+1}$ and $1^{j+2}$ differ only in the $(k-j-1)$-st and the $(k-j)$-th entries. Applying this observation to the following inductive hypothesis
	\begin{align*}
		&\left(r^*\left(\widehat{\sigma}_{1^r}-\sum_{i=0}^{j}(-1)^{r-i}e_{r-i}(k-i)\sigma_{1^i}^\text{can}\right)\right)_{1^{j+1}}\\
		=&(-1)^{r-j-1}e_{r-j-1}(k-j-1)\prod_{p=1}^{j+1}(t_{k-j}-t_{k-j+p}), 
	\end{align*}
	and bearing in mind that $e_{r-i}(k-i)$ is invariant under the swapping of $t_{k-j-1}$ and $t_{k-j}$ for $i\leq j$, we have
	\begin{align*}
		&\left(r^*\left(\widehat{\sigma}_{1^r}-\sum_{i=0}^{j}(-1)^{r-i}e_{r-i}(k-i)\sigma_{1^i}^\text{can}\right)\right)_{1^{j+2}}\\
		=&(-1)^{r-j-1}e_{r-j-1}(t_1, t_2, \cdots, t_{k-j-2}, t_{k-j})\prod_{p=1}^{j+1}(t_{k-j-1}-t_{k-j+p}).
	\end{align*}
	Applying the same observation again to 
	\begin{align*}
		&(r^*((-1)^{r-j-1}e_{r-j-1}(k-j-1)\sigma_{1^{j+1}}^\text{can}))_{1^{j+1}}\\
		=&(-1)^{r-j-1}e_{r-j-1}(k-j-1)\prod_{p=1}^{j+1}(t_{k-j}-t_{k-j+p}), 
	\end{align*}
	we have
	\begin{align*}
		&(r^*((-1)^{r-j-1}e_{r-j-1}(k-j-1)\sigma_{1^{j+1}}^\text{can}))_{1^{j+2}}\\
		=&(-1)^{r-j-1}e_{r-j-1}(k-j-1)\prod_{p=1}^{j+1}(t_{k-j-1}-t_{k-j+p}).
	\end{align*}
	It follows that 
	\begin{align*}
		&\left(r^*\left(\widehat{\sigma}_{1^r}-\sum_{i=0}^{j+1}(-1)^{r-i}e_{r-i}(k-i)\sigma_{1^i}^\text{can}\right)\right)_{1^{j+2}}\\
		=&(-1)^{r-j-1}(e_{r-j-1}(t_1, t_2, \cdots, t_{k-j-2}, t_{k-j})-e_{r-j-1}(k-j-1))\prod_{p=1}^{j+1}(t_{k-j-1}-t_{k-j+p})\\
		=&(-1)^{r-j-1}e_{r-j-2}(k-j-2)(t_{k-j}-t_{k-j-1})\prod_{p=1}^{j+1}(t_{k-j-1}-t_{k-j+p})\\
		=&(-1)^{r-j-2}e_{r-j-2}(k-j-2)\prod_{p=1}^{j+2}(t_{k-j-1}-t_{k-j-1+p}).
	\end{align*}
	We have established Claim \ref{claim}. In particular, the case $j=r-1$ says that $\displaystyle \widehat{\sigma}_{1^r}-\sum_{i=0}^{r-1}(-1)^{r-i}e_{r-i}(k-i)\sigma_{1^i}^\text{can}$ is supported above $1^r$, and 
	\[\left( r^*\left(\widehat{\sigma}_{1^r}-\sum_{i=0}^{r-1}(-1)^{r-i}e_{r-i}(k-i)\sigma_{1^i}^\text{can}\right)\right)_{1^r}=\prod_{p=1}^r(t_{k-r+1}-t_{k-r+1+p})=(r^*(\sigma_{1^r}^\text{can}))_{1^r}.\]
	Thus $\widehat{\sigma}_{1^r}-\sum_{i=0}^{r}(-1)^{r-i}e_{r-i}(k-i)\sigma_{1^i}^\text{can}=0$, for otherwise it would be supported above a certain $\mu'$ which includes $1^r$ and so by the algorithm contain the term $\beta\cdot\sigma_{\mu'}^\text{can}$ for some nonzero $\beta\in H_T^*(\text{pt}, \mathbb{Z})$ in its expression as the linear combination of canonical Schubert classes, whose degree is greater than that of $\widehat{\sigma}_{1^r}-\sum_{i=0}^{r}(-1)^{r-i}e_{r-i}(k-i)\sigma_{1^i}^\text{can}$, a contradiction. This finishes the proof of the Proposition.
\end{proof}
\begin{proposition}\label{noquantdef}
	Let $\lambda$ and $\mu$ be two Young diagrams satisfying $\lambda_1+\mu_1\leq n-k$. Then in $QH_T^*(X, \mathbb{Z})$, the product $\sigma_\lambda^\text{can}*\sigma_\mu^\text{can}$ does not involve quantum deformation, and is a linear combination of classes $\sigma_\nu^\text{can}$ where $\nu_1\leq \lambda_1+\mu_1$.
\end{proposition}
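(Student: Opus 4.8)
The statement separates into two claims: that $\sigma_\lambda^{\text{can}}*\sigma_\mu^{\text{can}}$ has no $q$-term, and that the canonical classes occurring are indexed by $\nu$ with $\nu_1\le\lambda_1+\mu_1$. I would handle the first claim and then derive the second: granting the first, the quantum product agrees with the ordinary equivariant product $\sigma_\lambda^{\text{can}}\cdot\sigma_\mu^{\text{can}}$ in $H_T^*(X,\mathbb{Z})$, so the bound on $\nu$ becomes the support bound for equivariant Littlewood--Richardson coefficients. For that bound I would use that the canonical classes are represented by factorial Schur polynomials $s_\lambda(x_1,\dots,x_k\mid t)$ (cf. \cite{KT}), that the transition between $\{s_\lambda(x\mid t)\}$ and the ordinary Schur polynomials $\{s_\lambda(x)\}$ and its inverse are both unitriangular with respect to inclusion of Young diagrams --- hence never increase the first part --- and that the classical Littlewood--Richardson rule forces $c^\nu_{\lambda\mu}=0$ unless $\nu_1\le\lambda_1+\mu_1$; chaining these three facts gives the same vanishing for the equivariant coefficients.

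For the first claim, the coefficient of $q^d\sigma_\nu^{\text{can}}$ in $\sigma_\lambda^{\text{can}}*\sigma_\mu^{\text{can}}$ lies in $H_T^*(\text{pt},\mathbb{Z})$ and, by the grading ($\deg q=n$, $\deg\sigma_\nu^{\text{can}}=|\nu|$), is homogeneous of degree $|\lambda|+|\mu|-dn-|\nu|$; up to the equivariant Poincar\'e pairing it is a genus-zero degree-$d$ equivariant Gromov--Witten invariant. Specializing all $t_i$ to $0$ via the $\mathbb{Z}[q]$-algebra homomorphism $QH_T^*(X,\mathbb{Z})\to QH^*(X,\mathbb{Z})$ (under which $\sigma_\lambda^{\text{can}}\mapsto\sigma_\lambda$) and applying Proposition \ref{colnodef} --- whose hypothesis ``the sum of the numbers of columns of $\lambda$ and $\mu$ is at most $n-k$'' is exactly $\lambda_1+\mu_1\le n-k$ --- shows that this coefficient vanishes at the origin for all $d\ge1$. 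To upgrade this to identical vanishing I would invoke the torus-equivariant ``quantum $=$ classical'' reduction of \cite{Mi,Mi2} (the equivariant refinement of \cite{Bu,Ber}): the equivariant invariant equals an equivariant Littlewood--Richardson number on a two-step flag variety, and when $\lambda_1+\mu_1\le n-k$ the diagrams entering that number violate the necessary containment conditions, so it vanishes for every $d\ge1$. Combining this with the equivariant Littlewood--Richardson bound of the previous paragraph yields the Proposition. An alternative, more in the spirit of this note, is to pass first to the characteristic-class generators and use Kim's equivariant quantum Siebert--Tian presentation of $QH_T^*(X,\mathbb{Z})$ (cf. \cite{Ki}): when $\lambda_1+\mu_1\le n-k$, the product of the $\widehat{\sigma}$'s formed on $\mathrm{Gr}(k,\infty)$ never leaves the $k\times(n-k)$ rectangle, so no reduction modulo the quantum relation --- the only place $q$ can enter --- is triggered; this version, however, still needs that the change of basis between $\{\widehat{\sigma}_\lambda\}$ and $\{\sigma_\lambda^{\text{can}}\}$ does not increase the first part.

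The main obstacle is precisely this upgrade from the non-equivariant statement to the equivariant one. Specializing $t\mapsto 0$ controls only the constant term of the structure constant, and for $k\ge2$ that polynomial can have positive degree; moreover the geometric argument behind Proposition \ref{colnodef} --- emptiness of the space of degree-$d$ curves through \emph{general} translates of $X_\lambda$, $X_\mu$, $X_{\nu^\vee}$ --- does not lift to the equivariant setting, since equivariant Gromov--Witten invariants are computed with the fixed, torus-stable Schubert varieties rather than general translates. One therefore genuinely needs the equivariant input of \cite{Mi,Mi2} (or the equivariant quantum presentation together with a basis-change lemma). By contrast, the equivariant Littlewood--Richardson bound underlying the second claim is routine once the factorial-Schur triangularity is in hand.
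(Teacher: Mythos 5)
Your split into the two claims and your factorial--Schur triangularity argument for the support bound are fine, but the proposal has a genuine gap exactly where you place it: the vanishing of every $d\geq 1$ term in the \emph{equivariant} product. The $t\mapsto 0$ specialization together with Proposition \ref{colnodef} controls only the constant term of each structure constant (as you note), and the proposed ``upgrade'' --- an equivariant quantum-to-classical reduction to a two-step flag variety, attributed to \cite{Mi, Mi2}, together with the unverified assertion that the relevant containment conditions fail for all $d\geq 1$ --- is not substantiated; the vanishing statements in those papers are obtained from kernel/span considerations and the Chevalley recursion, not from the two-step correspondence in the form you invoke, and the containment claim itself would still have to be proved. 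Your alternative route via Kim's presentation \cite{Ki} is likewise left hanging on the unproved lemma that the change of basis between $\{\widehat{\sigma}_\lambda\}$ and $\{\sigma_\lambda^{\text{can}}\}$ does not increase the first part. So as written neither route is complete.

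The step that closes the gap --- and is the paper's actual proof --- is to run your ``never leaves the rectangle'' argument directly in the canonical-class basis: by \cite[Theorem 1.1(b)]{Mi2} the canonical classes are represented by factorial Schur polynomials $\widetilde{s}_\lambda$, and $QH_T^*(X, \mathbb{Z})$ is presented as $\mathbb{Z}[\widetilde{s}_1, \widetilde{s}_{1^2}, \cdots, \widetilde{s}_{1^k}, t_1, \cdots, t_n, q]/(H_{n-k+1}, \cdots, H_n+(-1)^kq)$, while $H_T^*(X, \mathbb{Z})$ has the same presentation with $H_n$ undeformed (\cite[Corollary 5.1(b)]{Mi2}). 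The factorial Littlewood--Richardson rule of \cite{MS} (or your own unitriangularity chain, which proves the same bound) shows that $\widetilde{s}_\lambda\cdot\widetilde{s}_\mu$ is an $H_T^*(\text{pt}, \mathbb{Z})$-linear combination of $\widetilde{s}_\nu$ with $\nu_1\leq\lambda_1+\mu_1\leq n-k$; since no $\widetilde{s}_\nu$ with $\nu_1>n-k$ occurs, the quantum relation $H_n+(-1)^kq$ --- the only possible source of $q$ --- is never invoked, and both the absence of quantum deformation and the bound on $\nu_1$ follow simultaneously, with no specialization, no quantum-to-classical theorem, and no basis-change lemma. In short, you already have all the needed ingredients in your treatment of the second claim; replacing your main route by that presentation-theoretic argument is what turns the sketch into a proof.
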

\begin{proof}
	By Theorem \cite[Theorem 1.1(b)]{Mi2}, the canonical Schubert classes $\sigma_\lambda^\text{can}$ can be represented by factorial Schur polynomials $\widetilde{s}_\lambda$ (see \cite[\S 1.1]{Mi2} for definition), and $QH_T^*(X, \mathbb{Z})$ is isomorphic to the ring 
	\[\frac{\mathbb{Z}[\widetilde{s}_1, \widetilde{s}_{1^2}, \cdots, \widetilde{s}_{1^k}, t_1, \cdots, t_n, q]}{(H_{n-k+1}, H_{n-k+2}, \cdots, H_n+(-1)^kq)}\]
	via the map which sends $\widetilde{s}_\lambda$ to $\sigma_\lambda^\text{can}$ (here $H_i$ is the factorial version of Giambelli's formula for $\widetilde{s}_i$ in terms of factorial Schur polynomials corresponding to column Young diagrams). The factorial Schur polynomials also represent canonical Schubert classes in $H_T^*(X, \mathbb{Z})$ which is isomorphic to almost the same ring except that the last generator in the relation ideal does not have quantum deformation (cf. \cite[Corollary 5.1(b)]{Mi2}):
	\[H_T^*(X, \mathbb{Z})\cong\frac{\mathbb{Z}[\widetilde{s}_1, \widetilde{s}_{1^2}, \cdots, \widetilde{s}_{1^k}, t_1, \cdots, t_n, q]}{(H_{n-k+1}, H_{n-k+2}, \cdots, H_n)}.\]
	By the factorial version of Littlewood-Richardson's rule (cf. \cite{MS}), $\widetilde{s}_\lambda\cdot\widetilde{s}_\mu$ is a linear combination of $\widetilde{s}_\nu$ where $\nu_1\leq \lambda_1+\mu_1$ with coefficients involving only the equivariant variables (quantum deformation only appears if the linear combination involves some $\widetilde{s}_\nu$ with $\nu_1>n-k$, which can be written as a linear combination of $\widetilde{s}_{\nu'}$ with all $\nu'\subseteq k\times(n-k)$ with coefficients involving $q$, by factorial Giambelli's formula and the last generator $H_n+(-1)^kq$ of the relation ideal of $QH_T^*(X, \mathbb{Z})$).
\end{proof}
\begin{proof}[Proof of Theorem \ref{eqquantgiam} (equivariant quantum Giambelli's formula)]
	The equivariant Giambelli's formula $\widehat{\sigma}_\lambda=\det(\widehat{\sigma}_{1^{\lambda_i^T+j-i}})_{1\leq i, j\leq\ell(\lambda^T)}=\det(\widehat{\sigma}_{\lambda_i+j-i})_{1\leq i, j\leq\ell(\lambda)}$ holds in $H_{U(n)}^*(X, \mathbb{Z})$ by Definition \ref{equivGiambelli} and Remark \ref{coincide}. By Propositions \ref{cantochar} and \ref{noquantdef}, there is no quantum deformation in $\det(\widehat{\sigma}_{1^{\lambda_i^T+j-i}})_{1\leq i, j\leq \ell(\lambda^T)}$ and thus $\widehat{\sigma}_\lambda=\det(\widehat{\sigma}_{1^{\lambda_i^T+j-i}})_{1\leq i, j\leq \ell(\lambda^T)}$ still holds in $QH_{U(n)}^*(X, \mathbb{Z})$. Thus $\sigma_\lambda^\text{can}*\sigma_\mu^\text{can}\in QH_T^*(X, \mathbb{Z})$ and $\sigma_\lambda^\text{can}\cdot\sigma_\mu^\text{can}\in H_T^*(X, \mathbb{Z})$ are the same and both do not involve quantum deformation. 
\end{proof}
\begin{proposition}\label{eqquantumhomo}
	The map 
	\[\widetilde{f}: H_{U(n)}^*(X, \mathbb{Z})\to QH_{U(n)}^*(X, \mathbb{Z})\]
	which 
	\begin{enumerate}
		\item acts as the identity on the generators $\{\widehat{\sigma}_i| 1\leq i\leq n-k\}$ and $\{e_i| 1\leq i\leq n-1\}$, and 
		\item sends $e_n$ to $e_n+(-1)^kq$
	\end{enumerate}
	defines a ring homomorphism.
\end{proposition}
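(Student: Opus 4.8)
The plan is to imitate the proof of Lemma \ref{homomorphism}, one level up in the equivariant direction. Restricting the presentation of Proposition \ref{ringstr}(2) to Weyl-invariants gives the presentation of $H_{U(n)}^*(X, \mathbb{Z})$ recalled in Section \ref{linking},
\[H_{U(n)}^*(X, \mathbb{Z})\cong\frac{\mathbb{Z}[\widehat{\sigma}_1, \dots, \widehat{\sigma}_{n-k}, e_1, \dots, e_n]}{(\widehat{Y}_{k+1}, \dots, \widehat{Y}_n)}, \qquad \widehat{Y}_r=\det(\widehat{\sigma}_{1+j-i})_{1\leq i, j\leq r},\]
where, exactly as in the proof of Lemma \ref{homomorphism}, an entry $\widehat{\sigma}_m$ with $n-k<m\leq n$ appearing in a $\widehat{Y}_r$ is understood via Proposition \ref{ringstr}(\ref{simplify}) as $\widehat{\sigma}_m=-\sum_{i=1}^{m}e_i\widehat{\sigma}_{m-i}$. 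Since $\widetilde{f}$ is already prescribed on the algebra generators $\widehat{\sigma}_1, \dots, \widehat{\sigma}_{n-k}, e_1, \dots, e_n$, proving it is a ring homomorphism reduces to (i) exhibiting a presentation of $QH_{U(n)}^*(X, \mathbb{Z})$ in the same generators, and (ii) verifying that $\widetilde{f}$ carries each $\widehat{Y}_r$ into the relation ideal of that presentation.

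For (i) I would start from the presentation of $QH_T^*(X, \mathbb{Z})$ recalled in the proof of Proposition \ref{noquantdef} (equivalently \cite{Ki}), restrict to the Weyl-invariant subring --- which replaces $t_1, \dots, t_n$ by $e_1, \dots, e_n$ --- and pass from the factorial-Schur column generators to the characteristic classes $\widehat{\sigma}_i$ using the basis comparison of Proposition \ref{cantochar} together with the equivariant quantum Giambelli formula (Theorem \ref{eqquantgiam}), the latter guaranteeing that re-expressing the $\widehat{\sigma}_i$ and the $\widehat{\sigma}_{1^i}$ in terms of one another involves no quantum correction. The expected outcome is
\[QH_{U(n)}^*(X, \mathbb{Z})\cong\frac{\mathbb{Z}[\widehat{\sigma}_1, \dots, \widehat{\sigma}_{n-k}, e_1, \dots, e_n, q]}{(\widehat{Y}_{k+1}, \dots, \widehat{Y}_{n-1}, \widehat{Y}_n+(-1)^{n-k}q)};\]
by degree reasons every relation but the last is undeformed, and the last differs from $\widehat{Y}_n$ by $cq$ for some integer $c$, whose value $(-1)^{n-k}$ I would pin down by transporting Mihalcea's relation $H_n+(-1)^kq$, or alternatively by specializing the equivariant parameters to $0$ and comparing with the relation $Y_n+(-1)^{n-k}q$ of Theorem \ref{ST}.

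Granting (i), step (ii) is a determinant manipulation parallel to the one in Lemma \ref{homomorphism}. Since $\widetilde{f}$ fixes $\widehat{\sigma}_1, \dots, \widehat{\sigma}_{n-k}$ and $e_1, \dots, e_{n-1}$, an induction on $m$ using $\widehat{\sigma}_m=-\sum_{i=1}^{m}e_i\widehat{\sigma}_{m-i}$ (valid since the indices stay below $n$) shows $\widetilde{f}(\widehat{\sigma}_m)=\widehat{\sigma}_m$ for all $m\leq n-1$, hence $\widetilde{f}(\widehat{Y}_r)=\widehat{Y}_r$ for $k+1\leq r\leq n-1$; while $\widetilde{f}(\widehat{\sigma}_n)=\widehat{\sigma}_n+(-1)^{k+1}q$. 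In the matrix $(\widehat{\sigma}_{1+j-i})_{1\leq i, j\leq n}$ the only entry equal to $\widehat{\sigma}_n$ sits in position $(1, n)$, all other entries being $\widehat{\sigma}_m$ with $m\leq n-1$, and the corresponding cofactor equals $(-1)^{n+1}$ because the relevant minor is upper triangular with $1$'s on the diagonal; linearity of the determinant in the last column then gives
\[\widetilde{f}(\widehat{Y}_n)=\widehat{Y}_n+(-1)^{k+1}(-1)^{n+1}q=\widehat{Y}_n+(-1)^{n-k}q.\]
Therefore $\widetilde{f}$ maps $(\widehat{Y}_{k+1}, \dots, \widehat{Y}_n)$ into $(\widehat{Y}_{k+1}, \dots, \widehat{Y}_{n-1}, \widehat{Y}_n+(-1)^{n-k}q)$ and so descends to the asserted ring homomorphism.

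The main obstacle is step (i): obtaining the presentation of $QH_{U(n)}^*(X, \mathbb{Z})$ in the characteristic-class generators, with the correct sign on the quantum term. The delicate feature is that the factorial-Schur generators used in Mihalcea's presentation are not Weyl-invariant, and the change of generators furnished by Proposition \ref{cantochar} involves the non-symmetric partial elementary symmetric polynomials $e_j(k-i)$; one has to check that this change nevertheless restricts cleanly to the invariant subring and transports $H_n+(-1)^kq$ to precisely $\widehat{Y}_n+(-1)^{n-k}q$. Once that is settled, step (ii) and the conclusion are routine, and in fact identical in form to the argument already given for Lemma \ref{homomorphism}.
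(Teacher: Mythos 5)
Your step (ii) is exactly the verification the paper intends (and leaves implicit with ``one can see''): since for $n-k<m\leq n-1$ the class $\widehat{\sigma}_m$ is, via Proposition \ref{ringstr}(\ref{simplify}), the polynomial $-\sum_{i=1}^{m}e_i\widehat{\sigma}_{m-i}$ in generators fixed by $\widetilde{f}$, only the $(1,n)$ entry $\widehat{\sigma}_n$ of $\widehat{Y}_n$ is moved, and your cofactor computation giving $\widetilde{f}(\widehat{Y}_r)=\widehat{Y}_r$ for $r<n$ and $\widetilde{f}(\widehat{Y}_n)=\widehat{Y}_n+(-1)^{n-k}q$ is correct and runs parallel to the computation in Lemma \ref{homomorphism}. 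The divergence is in step (i), which you rightly flag as the crux but do not complete. The paper does not pass through Mihalcea's factorial-Schur presentation at all: it simply invokes Theorem 2 of \cite{Ki}, which already presents $QH_{U(n)}^*(X,\mathbb{Z})$ on exactly the generators $\widehat{\sigma}_1,\dots,\widehat{\sigma}_{n-k},e_1,\dots,e_n$ (equivariant Chern classes of the universal quotient bundle together with the equivariant parameters) with relations $\widehat{Y}_{k+1},\dots,\widehat{Y}_{n-1},\widehat{Y}_n+(-1)^{n-k}q$, so the delicate change of generators from the non-Weyl-invariant factorial Schur classes, and the attendant sign bookkeeping for the quantum term, never arise. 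Note also that your fallback ``degree reasons plus specialization'' argument tacitly needs an equivariant Siebert--Tian-type statement guaranteeing in advance that $QH_{U(n)}^*(X,\mathbb{Z})$ is presented by quantum deformations of the classical relations in these same generators; that is precisely what Kim's theorem (or Mihalcea's presentation, after your proposed but unexecuted transport of generators) supplies. So as written your step (i) is a genuine gap, but one closed by a single citation rather than by new work, after which your argument coincides with the paper's.
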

\begin{proof}
	By Theorem 2 of \cite{Ki}, we have
	\[QH_{U(n)}^*(X, \mathbb{Z})\cong\frac{\mathbb{Z}[\widehat{\sigma}_1, \cdots, \widehat{\sigma}_{n-r}, e_1, \cdots, e_n]}{(\widehat{Y}_{k+1}, \widehat{Y}_{k+2}, \cdots, \widehat{Y}_n+(-1)^{n-k}q)}.\]
	Comparing this with the presentation of $H_{U(n)}^*(X, \mathbb{Z})$ at the beginning of Section \ref{linking} which has the same set of generators, one can see that $\widetilde{f}$ maps the relation ideal of $H_{U(n)}^*(X, \mathbb{Z})$ to that of $QH_{U(n)}^*(X, \mathbb{Z})$. Thus $\widetilde{f}$ indeed defines a ring homormorphism from $H_{U(n)}^*(X, \mathbb{Z})$ to $QH_{U(n)}^*(X, \mathbb{Z})$. 
\end{proof}
\begin{proof}[Proof of Theorem \ref{equivequivquantum}]
	By Theorem \ref{eqquantgiam} and Proposition \ref{eqquantumhomo}, $\widetilde{f}(\widehat{\sigma}_\lambda)=\widehat{\sigma}_\lambda$ for any $\lambda\subseteq k\times(n-k)$. The Theorem then immediately follows from Proposition \ref{equivequivquantum}.
\end{proof}
\begin{proof}[Proof of Theorem \ref{equivquantpieri}]
This follows from Theorem \ref{equivpieri} and Theorem \ref{equivequivquantum}. 
\end{proof}
\begin{remark}
	Like the equivariant quantum Pieri's formula in \cite{Mi} in terms of canonical Schubert classes, ours also do not have any `mixed' term, i.e. the term which contains both quantum and equivariant variables.
\end{remark}
\begin{example}
Let $X=\text{Gr}(2, 4)$. To compute the equivariant quantum product $\widehat{\sigma}_{22}*\widehat{\sigma}_{21}\in QH_{U(n)}^*(X, \mathbb{Z})$, we first compute the equivariant product $\widehat{\sigma}_{22}\cdot\widehat{\sigma}_{21}\in H_{U(n)}^*(X, \mathbb{Z})$ using classical Littlewood-Richardson's rule and Giambelli's formula, as well as Proposition \ref{ringstr} (\ref{simplify}). 
\begin{align*}
	&\widehat{\sigma}_{22}\cdot\widehat{\sigma}_{21}\\
	=&\widehat{\sigma}_{43}\\
	=&\left|\begin{matrix}\widehat{\sigma}_4&\widehat{\sigma}_5\\ \widehat{\sigma}_2&\widehat{\sigma}_3\end{matrix}\right|\\
	=&\left|\begin{matrix}-e_4-e_3\widehat{\sigma}_1-e_2\widehat{\sigma}_2-e_1\widehat{\sigma}_3& -e_4\widehat{\sigma}_1-e_3\widehat{\sigma}_2-e_2\widehat{\sigma}_3-e_1\widehat{\sigma}_4\\ \widehat{\sigma}_2&\widehat{\sigma}_3\end{matrix}\right|\\
	=&-e_4\left|\begin{matrix}1& \widehat{\sigma}_1\\ \widehat{\sigma}_2&\widehat{\sigma}_3\end{matrix}\right|-e_3\left|\begin{matrix}\widehat{\sigma}_1&\widehat{\sigma}_2\\ \widehat{\sigma}_2&\widehat{\sigma}_3\end{matrix}\right|-e_2\left|\begin{matrix}\widehat{\sigma}_2&\widehat{\sigma}_2\\ \widehat{\sigma}_2&\widehat{\sigma}_3\end{matrix}\right|-e_1\left|\begin{matrix}\widehat{\sigma}_3&\widehat{\sigma}_4\\ \widehat{\sigma}_2&\widehat{\sigma}_3\end{matrix}\right|\\
	=&e_4\widehat{\sigma}_{21}+e_3\widehat{\sigma}_{22}-e_1\left|\begin{matrix}-e_3-e_2\widehat{\sigma}_1-e_1\widehat{\sigma}_2& -e_4-e_3\widehat{\sigma}_1-e_2\widehat{\sigma}_2-e_1\widehat{\sigma}_3\\ \widehat{\sigma}_2&\widehat{\sigma}_3\end{matrix}\right|\\
	=&e_4\widehat{\sigma}_{21}+e_3\widehat{\sigma}_{22}-e_1\left(-e_4\left|\begin{matrix}0&1\\ \widehat{\sigma}_2&\widehat{\sigma}_3\end{matrix}\right|-e_3\left|\begin{matrix}1& \widehat{\sigma}_1\\ \widehat{\sigma}_2&\widehat{\sigma}_3\end{matrix}\right|-e_2\left|\begin{matrix}\widehat{\sigma}_1&\widehat{\sigma}_2\\ \widehat{\sigma}_2&\widehat{\sigma}_3\end{matrix}\right|-e_1\left|\begin{matrix}\widehat{\sigma}_2&\widehat{\sigma}_3\\ \widehat{\sigma}_2&\widehat{\sigma}_3\end{matrix}\right|\right)\\
	=&e_4\widehat{\sigma}_{21}+e_3\widehat{\sigma}_{22}-e_1(e_4\widehat{\sigma}_2+e_3\widehat{\sigma}_{21}+e_2\widehat{\sigma}_{22})\\
	=&(e_3-e_1e_2)\widehat{\sigma}_{22}+(e_4-e_1e_3)\widehat{\sigma}_{21}-e_1e_4\widehat{\sigma}_2.
\end{align*} 
By Theorem \ref{equivequivquantum}, we have 
\[\widehat{\sigma}_{22}*\widehat{\sigma}_{21}=(e_3-e_1e_2)\widehat{\sigma}_{22}+(e_4+q-e_1e_3)\widehat{\sigma}_{21}-e_1(e_4+q)\widehat{\sigma}_2.\]
Below is the multiplication table for $QH_T^*(\text{Gr}(2, 4), \mathbb{Z})$ with respect to the basis of characteristic classes.
\end{example}
\begin{tiny}
	\begin{center}
		\begin{tabular}{|c|c|c|c|c|c|}
			\hline
			& $\widehat{\sigma}_1$&$\widehat{\sigma}_2$&$\widehat{\sigma}_{11}$&$\widehat{\sigma}_{21}$&$\widehat{\sigma}_{22}$\\
			\hline
			$\widehat{\sigma}_1$&$\widehat{\sigma}_1+\widehat{\sigma}_{11}$&\begin{tabular}{c}$\widehat{\sigma}_{21}-e_1\widehat{\sigma}_2$\\$-e_2\widehat{\sigma}_1-e_3$\end{tabular}&$\widehat{\sigma}_{21}$&\begin{tabular}{c}$\widehat{\sigma}_{2, 2}-e_1\widehat{\sigma}_{21}$\\ $-e_2\widehat{\sigma}_{11}+e_4+q$\end{tabular}&\begin{tabular}{c}$-e_1\widehat{\sigma}_{22}+e_3\widehat{\sigma}_{11}$\\$+(e_4+q)\widehat{\sigma}_1$\end{tabular}\\
			\hline
			$\widehat{\sigma}_2$& &\begin{tabular}{c}$\widehat{\sigma}_{22}-e_1\widehat{\sigma}_{21}$\\ $-e_2\widehat{\sigma}_{11}+(e_1^2-e_2)\widehat{\sigma}_2$\\$+(e_1e_2-e_3)\widehat{\sigma}_1+e_1e_3$\end{tabular}&\begin{tabular}{c}$-e_1\widehat{\sigma}_{21}-e_2\widehat{\sigma}_{11}$\\ $+e_4+q$\end{tabular}&\begin{tabular}{c}$-e_1\widehat{\sigma}_{22}+(e_1^2-e_2)\widehat{\sigma}_{21}$\\ $e_1e_2\widehat{\sigma}_{11}$\end{tabular}& \begin{tabular}{c}$(e_1^2-e_2)\widehat{\sigma}_{22}$\\$+(-e_1e_3+e_4+q)\widehat{\sigma}_{11}$\\ $-e_1(e_4+q)\widehat{\sigma}_1$\end{tabular}\\
			\hline
			$\widehat{\sigma}_{11}$& &&$\widehat{\sigma}_{22}$&\begin{tabular}{c}$-e_1\widehat{\sigma}_{22}+e_3\widehat{\sigma}_{11}$\\ $+(e_4+q)\widehat{\sigma}_1$\end{tabular}&\begin{tabular}{c}$e_2\widehat{\sigma}_{22}+e_3\widehat{\sigma}_{21}$\\ $+(e_4+q)\widehat{\sigma}_2$\end{tabular}\\
			\hline
			$\widehat{\sigma}_{21}$& &&&\begin{tabular}{c}$e_1^2\widehat{\sigma}_{22}+e_3\widehat{\sigma}_{21}$\\ $+(e_4+q-e_1e_3)\widehat{\sigma}_{11}$\\ $+(e_4+q)\widehat{\sigma}_2$\\ $-e_1(e_4+q)\widehat{\sigma}_1$\end{tabular}&\begin{tabular}{c}$(e_3-e_1e_2)\widehat{\sigma}_{22}$\\ $+(e_4+q-e_1e_3)\widehat{\sigma}_{21}$\\$-e_1(e_4+q)\widehat{\sigma}_2$\end{tabular}\\
			\hline
			$\widehat{\sigma}_{22}$& &&&& \begin{tabular}{c}$(e_2^2-e_1e_3)\widehat{\sigma}_{22}$\\ $+(e_2e_3-e_1(e_4+q))\widehat{\sigma}_{21}$\\ $(e_3^2-e_2(e_4+q))\widehat{\sigma}_{11}$\\$+e_2(e_4+q)\widehat{\sigma}_2$\\$+e_3(e_4+q)\widehat{\sigma}_1$\\$+(e_4+q)^2$\end{tabular} \\
			\hline
		\end{tabular}
	\end{center}
\end{tiny}

\noindent\footnotesize{\textsc{NYU-ECNU Institute of Mathematical Sciences, \\
New York University Shanghai, \\
3663 Zhongshan Road North,\\
Shanghai 200062, China}\\
\\
\textsc{E-mail}: \texttt{ckfok@nyu.edu}\\
\textsc{URL}: \texttt{https://sites.google.com/site/alexckfok}
\end{document}